\numberwithin{equation}{section}
\newtheorem{theorem}{Theorem}[section]
\newtheorem{lemma}[theorem]{Lemma}
\newtheorem{corollary}[theorem]{Corollary}
\newtheorem{proposition}[theorem]{Proposition}
\newtheorem{definition}[theorem]{Definition}
\theoremstyle{remark}
\newtheorem{remark}{Remark}[section]
\newcommand{\tr}{\mathrm{tr}}
\newcommand{\sym}{\mathrm{sym}}
\newcommand{\riem}{\mathrm{Rm}}
\newcommand{\ric}{\mathrm{Ric}}
\newcommand{\scal}{\mathrm{Scal} }
\newcommand{\Div}{\mathrm{div}}
\newcommand{\curl}{\mathrm{curl}}
\newcommand{\oct}{\circledcirc}
\newcommand{\Pop}{\mathsf P}
\newcommand{\Vop}{\mathsf V}
\newcommand{\Lop}{\mathsf L}
\newcommand{\vol}{\mathrm{Vol}}
\newcommand{\Fop}{\mathsf{F}}
\newcommand{\Kop}{\mathsf{K}}
\newcounter{noteCounter}
\title{A $G_2$-Hilbert functional in $G_2$-geometry}
\author{Panagiotis Gianniotis}
\address{Panagiotis Gianniotis: Department of Mathematics\\
National and Kapodistrian University of Athens}
\email{pgianniotis@math.uoa.gr}
\author{George Zacharopoulos}
\address{George Zacharopoulos: Department of Mathematics\\
National and Kapodistrian University of Athens}
\email{gzacharop@math.uoa.gr}
\begin{document}
\maketitle

\begin{abstract}
In this paper we introduce a new functional on the space of $G_2$-structures which we call the $G_2$--Hilbert functional. It is uniquely determined by a few basic principles inspired by the Einstein--Hilbert functional in Riemannian Geometry, and it has similar variational behaviour with it. For instance,  torsion-free and nearly $G_2$-structures are saddle critical points of the volume-normalized $G_2$--Hilbert functional. This allows us to uniquely distinguish two  new  flows of $G_2$-structures, which can be considered as analogues of the Ricci flow in $G_2$-geometry.
\end{abstract}

\section{Introduction} $\\$

Geometric flows of $G_2$-structures have recently drawn a considerable amount of attention as a possible tool to deform a given $G_2$-structure to one that has special properties.

The most well studied geometric flow on $G_2$-structures is the Laplacian flow, introduced by Bryant in \cite{Bryant}. This is a flow of \textit{closed} $G_2$-structures, evolving in the direction of their Hodge Laplacian. Namely, a family $\{\varphi_t\}$ of closed $G_2$-structures evolves by the Laplacian flow if
\begin{equation}
\begin{aligned}
\frac{\partial \varphi}{\partial t} &=\Delta_d \varphi.
\end{aligned}
\end{equation}
The short-time existence of smooth solutions of the Laplacian flow starting from any given closed $G_2$-structure on a compact $7$-manifold was proven by Bryant-Xu in \cite{Bryant_Xu}. In fact, under the Laplacian flow the closed $G_2$-structures evolve within   the same cohomology class and the fixed points of the flow are exactly the torsion-free $G_2$-structures in that cohomology class, if they exist. Moreover, by Hitchin \cite{Hitchin}, the Laplacian is the gradient flow of the volume functional restricted to a given cohomology class.

One can think of the Laplacian flow as a $G_2$-analogue of the K\"ahler--Ricci flow, in which the K\"ahler form evolves within its cohomology class so that the underlying Riemannian metric evolves by the Ricci flow. A notable difference with the Laplacian flow, however,  is that the Ricci flow exists for short-time starting from any initial Riemannian metric, and becomes a K\"ahler--Ricci flow once the initial condition is K\"ahler.  The Laplacian flow on the other hand does not define a (weakly) parabolic flow of general (i.e. non-closed) $G_2$-structures, and solutions starting from a general initial $G_2$-structure are not expected to exist even for short time. Moreover, it is not known whether every closed $7$-manifold admitting $G_2$-structures admits a closed $G_2$-structure. 

On the other hand, co-closed $G_2$-structures are abundant, since every compact $7$-manifold admitting $G_2$-structures admits a co-closed $G_2$-structure, by Crowley-Nordstr\"om \cite{CJ}. Karigiannis--McKay--Tsui in \cite{coflow} introduced the Laplacian co-flow, which evolves a family of co-closed $G_2$-structures so that 
$$\frac{\partial \psi}{\partial t} = \Delta_d \psi$$
where $\psi$ denotes the dual of a $G_2$-structure $\varphi$. 
To this day, however, it has not been possible to establish the short-time existence of this flow, starting from a co-closed $G_2$-structure. To amend this issue Grigorian \cite{mod_coflow} proposed a modified Laplacian co-flow, and established its short time existence.

The Laplacian flow, the Laplacian co-flow and the modified Laplacian co-flow have been extensively studied during the past years, especially under additional symmetry assumptions, see for instance \cite{LW1,LW2,Laplacian_stability,Lauret1,Lauret2,FR,Lin,Lambert_Lotay,Kennon_Lotay,MS_coflow,SSS_coflow,Grigorian_mcoflow_sym,Grigorian_coflow_survey} for a non-exhaustive list. In particular, the dynamic stability of the Laplacian flow and modified co-flow has been established, near torsion-free $G_2$-structures, by Lotay--Wei \cite{Laplacian_stability} and Bedulli--Vezzoni \cite{mod_coflow_stability} respectively.

 The general underlying structure, however, of these flows is still poorly understood. For instance,  there are no known preserved geometric conditions and crucial analytic tools to perform singularity analysis, such a result analogous to Perelman's no-local collapsing result for the Ricci flow, remain elusive - even for the Laplacian flow. Gao Chen \cite{Gao} proves such a result, however under the assumption that the norm of the torsion of the evolving $G_2$-structures remains bounded along the Laplacian flow.

In seeking a reasonable geometric evolution equation it is typical to consider the negative gradient flow of some natural diffeomorphism invariant functional.  In $G_2$-geometry, a possible such functional is given by the square $L^2$-norm of the torsion of a $G_2$-structure, namely
\begin{equation}\label{eqn:T2_functional}
\varphi \mapsto \frac{1}{2}\int_M |T|^2 d\mu,
\end{equation}
where $T$ is the torsion of $\varphi$, while the norm and the volume form are defined by the Riemannian metric induced by $\varphi$. Purely by scaling considerations, critical points of this functional are exactly the torsion-free $G_2$-structures. Weiss--Witt in \cite{WW} established the short-time existence of  the negative gradient flow of an equivalent version of \eqref{eqn:T2_functional} and proved its dynamic stability near torsion-free $G_2$-structures. Moreover, Ammann--Weiss--Witt in \cite{spinorial}, produce a general spinorial flow, the negative gradient flow of an energy functional on spinors, which is closely related to the flow introduced in \cite{WW}.

It is also possible to restrict the functional \eqref{eqn:T2_functional} in the isometric class of a given $G_2$-structure $\bar \varphi$, namely the collection of all $G_2$-structures that induce the same Riemannian metric as $\bar \varphi$. These are sections of an $\mathbb RP^7$-bundle oven the $7$-manifold $M$, by \cite{Bryant}. The negative gradient flow of this restriction is then given by
\begin{equation}\label{eqn:isometric_flow}
\frac{\partial \varphi}{\partial t} = \Div T\lrcorner \psi.
\end{equation}
It was first introduced by Grigorian in \cite{Grig_octonion}, and its short-time existence on compact $7$-manifolds was established by Bagaglini in \cite{Bagaglini}. Several properties of this flow were further investigated by Grigorian \cite{Grig_isometric}, as well as from the first named author together with Dwivedi and Karigiannis in \cite{DGK}. Loubeau-S\'a Earp showed that this flow is a particular case of a harmonic flow of geometric structures \cite{harmonic_geometric}, and shares many similarities with the harmonic map heat flow. It also generalizes to the case of $\textrm{Spin}(7)$-structures, see \cite{harmonic_spin7}.

In \cite{flows2}, following earlier work of Karigiannis \cite{flows1}, the authors classified all the possible second-order quasilinear operators that are given by the covariant derivative of the torsion and define a $3$-form, so they can define a flow of $G_2$-structures. The analysis in \cite{flows2} distinguished a wide class of differential operators on $G_2$-structures, that define flows amenable to the DeTurck trick to deal with the diffeomorphism invariance of the equation, and prove that these flows have unique smooth solutions, at least for short time, starting from any initial $G_2$-structure on a compact $7$-manifold. The most interesting case of this result is the following theorem, which is included in Theorem 6.76 in \cite{flows2}, and includes the negative gradient flow of the functional \eqref{eqn:T2_functional}.

\begin{theorem}\label{thm:general_ste}
Let $M$ be a compact $7$-manifold with a $G_2$-structure $\varphi_0$. Consider the initial value problem 
\begin{equation}\label{eqn:general_flow}
\begin{aligned}
\frac{\partial \varphi}{\partial t}&= (-\ric +a\mathcal L_{\Vop T} g)\diamond \varphi + (b_1 \Div T +b_2 \Div T^t) \lrcorner \psi +\textrm{lower order terms},\\
\varphi(0)&=\varphi_0,
\end{aligned}
\end{equation}
where $\ric$ is the Ricci curvature of the Riemannian metrics induced by the evolving $G_2$-structures, $T$ is their torsion and $T^t$ its transpose. Using abstract index notation, $\Vop T_k=T_{ab}\varphi_{abk}$, and $\diamond \varphi$ acts on symmetric $2$-tensors as
$$(h\diamond\varphi)_{ijk} =h_{ip}\varphi_{pjk}+h_{jp} \varphi_{ipk} + h_{kp}\varphi_{ijp}.$$
Then, if
\begin{align*}
0\leq b_1-a-1<4,\\
b_1+b_2\geq 1,
\end{align*}
there exists an $\varepsilon=\varepsilon(\varphi_0)>0$ and a unique smooth one parameter family $\varphi(t)$, $t\in[0,\varepsilon)$, of $G_2$-structures that solves \eqref{eqn:general_flow}.
\end{theorem}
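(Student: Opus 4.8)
The plan is to follow the DeTurck method familiar from Ricci flow, adapted to the bundle of $G_2$-structures. Writing the right-hand side of \eqref{eqn:general_flow} as a second-order operator $F(\varphi)$ on $3$-forms, the first task is to compute its linearization $DF_\varphi$ and to extract the principal symbol $\sigma_\xi(DF_\varphi)$ for each nonzero covector $\xi$. The term $-\ric\diamond\varphi$ contributes the usual Ricci symbol, whose kernel is precisely the image of the infinitesimal diffeomorphism action $X\mapsto \mathcal L_X\varphi$; this reflects the diffeomorphism invariance of $F$ and is the reason the raw flow is only weakly parabolic. The remaining second-order contributions come from $a\,\mathcal L_{\Vop T}g\diamond\varphi$ and from the divergence terms $(b_1\Div T+b_2\Div T^t)\lrcorner\psi$, each of which I would expand using the $G_2$-Bianchi-type identities relating $\nabla T$ to curvature, keeping only the symbol.

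First I would fix a background $G_2$-structure $\bar\varphi$ and introduce the DeTurck vector field $W=W(\varphi,\bar\varphi)$ built, as for Ricci flow, from the difference of the Levi-Civita connections of $g=g_\varphi$ and $\bar g$. Replacing $F$ by $F+\mathcal L_W\varphi$ breaks the diffeomorphism invariance and cancels the degenerate directions in the symbol. The heart of the argument is then to show that under the stated inequalities the modified symbol $\sigma_\xi(DF_\varphi)+\sigma_\xi(\mathcal L_{W}\varphi)$ is negative definite. To do this I would decompose the tangent space to the space of $G_2$-structures into its $G_2$-irreducible summands, schematically the $\mathbf 1$, $\mathbf 7$ and $\mathbf{27}$ pieces corresponding to the conformal, vector, and traceless-symmetric deformations, and observe that, after the DeTurck modification, the symbol acts as a scalar on each summand by Schur's lemma. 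Computing these scalars as explicit affine functions of $a$, $b_1$, $b_2$, the conditions $0\le b_1-a-1<4$ and $b_1+b_2\ge 1$ are exactly what forces every eigenvalue of $-\sigma_\xi$ to have strictly positive real part, i.e.\ the modified flow to be strictly parabolic.

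Once strict parabolicity is in hand, short-time existence and uniqueness for the modified initial value problem follow from standard quasilinear parabolic theory, either via the inverse function theorem on parabolic H\"older spaces or via Hamilton's Nash--Moser scheme. To conclude, I would reverse the DeTurck trick: the vector fields $W(\varphi(t),\bar\varphi)$ generate a time-dependent family of diffeomorphisms $\{\phi_t\}$ with $\phi_0=\mathrm{id}$, and pulling back the modified solution by $\phi_t$ produces a solution $\varphi(t)=\phi_t^*\tilde\varphi(t)$ of \eqref{eqn:general_flow}; uniqueness for the original flow then follows by combining uniqueness for the modified flow with uniqueness for the harmonic-map-type ODE defining $\phi_t$.

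The step I expect to be the genuine obstacle is the symbol computation and the representation-theoretic bookkeeping in the second paragraph: isolating the exact second-order contribution of the torsion-divergence terms $\Div T$ and $\Div T^t$ (which have no analogue in Ricci flow), correctly coupling them to the $\diamond$ and $\lrcorner\psi$ operators across the $\mathbf 7$ and $\mathbf{27}$ components, and verifying that the two given inequalities are sharp for parabolicity rather than merely sufficient.
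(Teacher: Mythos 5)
The paper does not actually prove this statement: it is quoted from Theorem 6.76 of \cite{flows2}, so there is no internal proof to compare against. Judged on its own terms, your strategy --- linearize, identify the degenerate directions of the symbol with the infinitesimal diffeomorphism action, cancel them with a DeTurck vector field, verify strict parabolicity of the modified system, then undo the gauge --- is the right one and is the one carried out in the cited source. But the step you yourself flag as ``the genuine obstacle'' is the entire content of the theorem, and the shortcut you propose for it does not work as stated. Schur's lemma applied to the $G_2$-irreducible decomposition $\mathbf 1\oplus\mathbf 7\oplus\mathbf{27}$ does not force $\sigma_\xi$ to act by scalars on the summands: the symbol depends on the covector $\xi$, hence is equivariant only under the stabilizer of $\xi$ in $G_2$ (an $SU(3)$), and it visibly mixes the summands through terms of the shape $h(\xi)$, $\xi\,\tr h$ and $\Vop(\xi\otimes X)$. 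The correct structural statement (cf.\ Proposition \ref{prop:special_rl_symbol}) is that the symbol preserves the kernel of a first-order Bianchi-type map $B_\xi(h,X)$ and acts there as $|\xi|^2$, while the complement of that kernel is exactly what the gauge term must absorb; the real work is the linear algebra of this map on $S^2(T_x^*M)\oplus T_x^*M$ as a function of $(a,b_1,b_2)$, which is where the constants in $0\le b_1-a-1<4$ and $b_1+b_2\ge 1$ come from, and nothing in your sketch computes them or shows they are what positivity of the modified symbol requires.

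Two further points need attention. First, a single DeTurck field $W^k=g^{ij}(\Gamma^k_{ij}-\bar\Gamma^k_{ij})$ must repair the symbol simultaneously in both components of $\Omega^3\cong\mathcal S^2\oplus\Omega^1$, since by \eqref{eqn:Lie_phi} the correction $\mathcal L_W\varphi$ contributes $\tfrac12\mathcal L_Wg$ to the symmetric-tensor part and $-\tfrac12\curl W+W\lrcorner T$ to the $1$-form part; that one vector field suffices for both is part of what must be verified and is not automatic. Second, in the gauge-unfixing, passing from a DeTurck solution to a solution of \eqref{eqn:general_flow} is indeed an ODE for the diffeomorphisms generated by $-W$, but the uniqueness argument requires the reverse passage, from an arbitrary solution of \eqref{eqn:general_flow} to a DeTurck solution, and that direction is obtained by solving a parabolic harmonic-map-heat-flow type equation for the gauge, not an ODE. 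With these repairs the outline is sound, but as written it records the standard scaffolding while leaving the theorem's actual content --- the symbol computation that produces the two inequalities --- unexecuted.
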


The particular case where $b_1+b_2=1$, $a=-b_2$, corresponds to the $1$-parameter family of second order quasilinear operators
\begin{equation}\label{eqn:special_rl}
P_a(\varphi) = (-\ric+a\mathcal L_{\Vop T} g)\diamond \varphi +( (1+a) \Div T - a\nabla \tr T).
\end{equation}
In this paper, we call any operator  on $G_2$-structures which, neglecting lower order terms, is of the form \eqref{eqn:special_rl},  a \textit{special Ricci-like operator}.  Note that we replaced the operator $\Div T^t$ with $\nabla \tr T$ as these two operators are related via lower order terms, see Proposition \ref{prop:g2bianchi_identities}.

Our interest on special Ricci-like operators stems from the fact that the behaviour of the principal symbol of the linearization of the operators $P_a$ is strikingly similar to that of the linearization of the Ricci tensor in Riemannian geometry. In both cases, the principal symbol has a $7$-dimensional kernel, due to diffeomorphism invariance, and there is an invariant subspace in which it acts as a multiple of the identity. The reader can find in \cite{chow_knopf} a detailed analysis of the principal symbol of the Ricci tensor, while the precise behaviour of the principal symbol of special Ricci-like operators is described in Proposition \ref{prop:special_rl_symbol}.

By \eqref{eqn:metric_evol}  the geometric flow
\begin{equation}\label{eqn:srl_flow_intro}
\frac{\partial \varphi}{\partial t} = P_a(\varphi)
\end{equation}
induces an evolution of Riemannian metrics $g(t)$ which is just the Ricci flow modified by diffeomorphisms, namely
\begin{equation}\label{eqn:srl_metric_flow}
\frac{\partial g}{\partial t} = -2\ric +2a\mathcal L_{\Vop T} g.
\end{equation}
It is thus reasonable to consider geometric flows of $G_2$-structures induced by special Ricci-like operators. Moreover, the behaviour of the principal symbol of these operators raises the expectation that, under a flow defined via a special Ricci-like operator, geometric quantities will evolve via the heat equation, up to lower order terms. This is explained in more detail in Subsection \ref{sec:diff_ops}. It is important to note that this is a drawback of the negative gradient-flow of \eqref{eqn:T2_functional}. This flow, by Proposition \ref{variationofbasicfunctionals}, is given, up to lower-order terms, by
\begin{equation*}
\frac{\partial \varphi}{\partial t} = (-\ric-\frac{1}{2}\mathcal L_{\Vop T} g +\textrm{lower order terms})\diamond \varphi + \Div T\lrcorner \psi.
\end{equation*}
In particular, the second-order terms are not described by a special Ricci-like operator, which we will see in Subsection \ref{sec:diff_ops} to be crucial for the torsion to evolve by a heat equation.

Since $G_2$-structures, unlike Riemannian metrics, do have first order invariants, expressed in terms of their torsion $T$, the definition of the flow \eqref{eqn:srl_flow_intro} is arbitrary, as it doesn't include any lower order terms. The choice of these lower-order terms should be chosen so that they simplify the structure of the evolution equations of various geometric quantities, or at least be dictated by some reasonable principles. 

The purpose of this paper is to identify principles that lead to a judicious choice of these lower-order terms. These principles do have variational origin, however the flows of $G_2$-structures they will specify will not be the negative gradient flow of some functional on $G_2$-structures, in the same way that the Ricci flow is not the gradient flow of a functional on Riemannian metrics.

In order to describe our results, recall that the subspace on which the principal symbol of the linearization of Ricci curvature acts as a multiple of the identity is exactly the kernel of the principal symbol of the Bianchi operator 
 $$\beta_g(h)= \Div\left(h-\frac{\tr h}{2} g\right).$$
Moreover, the twice contracted second Bianchi identity 
\begin{equation}\label{eqn:twice_bianchi}
\Div \ric = \frac{1}{2}\nabla \scal,
\end{equation}
is just the statement that $\ric\in\ker \beta_g$, and has a variational origin; it is a consequence of the diffeomorphism invariance of the Einstein--Hilbert (or total scalar curvature) functional
$$\mathcal S(g)=\int_M \scal d\mu_g$$
on the space of smooth Riemannian metrics on $M$. Moreover, the Ricci tensor, although it is not the gradient of $\mathcal S$, it also has variational origin in the sense that the first variation of $\mathcal S$ satisfies
\begin{equation}\label{eqn:EH_var}
D_g \mathcal S(h)= \int_M \langle -\ric  +\frac{1}{2}\scal \; g,h\rangle d\mu_g
\end{equation}
for any infinitesimal variation $h$ of $g$, see for instance \cite{besse}. In particular, the appearance of the Ricci curvature in \eqref{eqn:EH_var} is due to the variation formula
\begin{equation}\label{eqn:var_scal}
D_g \scal (h) = \langle -\ric,h\rangle -\Delta \tr h +\Div\Div h.
\end{equation}

In this work, our goal is to distinguish special Ricci-like operators that have a similar variational origin as the Ricci tensor in Riemannian geometry.

In Proposition \ref{prop:uniqueness}, we show that  there is no diffeomorphism invariant functional on $G_2$-structures whose gradient is a special Ricci-like operator, in the same way that the Ricci tensor is not the gradient of $\mathcal S$. Moreover, we show that if the gradient of a diffeomorphism invariant functional is a second order quasilinear operator on $G_2$-structures and \textit{includes} a special Ricci-like operator, then the second order terms of the gradient are exactly given by
\begin{equation}\label{eqn:frl}
P_{-1/3} +\left(\frac{1}{6}\scal\; g\right)\diamond \varphi,
\end{equation}
which is not a special Ricci-like operator. 

In Section \ref{sec:g2_hilbert}, under reasonable assumptions, we show that there is a \textit{unique} functional on the space of $G_2$-structures whose gradient has exactly the second order terms \eqref{eqn:frl}. We call this functional the $G_2$-Hilbert functional, and is given by 
\begin{equation*}
\mathcal F(\varphi)=\int_M \left(\frac{1}{6}\scal -\frac{1}{3} |T|^2 -\frac{1}{6}(\tr T)^2\right) d\mu_g.
\end{equation*}
The critical points of its volume-normalized version include the torsion free and nearly $G_2$-structures, namely $G_2$-structures whose torsion satisfies $T=cg$, for some constant $c$. Moreover, in Section \ref{sec:2var}, we show that torsion free and nearly-$G_2$ critical points are saddle points of the volume normalized $G_2$-Hilbert functional, a behaviour which is exactly analogous to that of the volume normalized Einstein--Hilbert functional, where Einstein metrics are always saddle critical points.

In Section \ref{sec:g2_hilbert}, we also show that the variational behaviour of $\mathcal F$ is naturally linked to two particular special Ricci-like operators
\begin{align*}
\hat P(\varphi) &= \hat P_1(\varphi) \diamond \varphi + P_2(\varphi)\lrcorner \psi,\\
\tilde P(\varphi) &= \tilde P_1(\varphi)\diamond\varphi + P_2(\varphi)\lrcorner \psi,
\end{align*} 
whose second-order terms are given by $P_{-1/3}$. In particular,
\begin{align*}
\hat P_1 &= -\ric -\frac{1}{3}\mathcal L_{\Vop T} g-\frac{2}{3} (T\circ (\Vop T\lrcorner \varphi))_\sym + \tr T T_\sym,\\
\tilde P_1 &= \hat P_1 +\frac{1}{3}\left(|T|^2 -\frac{1}{3} |\Vop T|^2\right) g,\\
P_2 &= \frac{2}{3} \Div T +\frac{1}{3} \nabla \tr T +\frac{1}{3} \tr T \Vop T,
\end{align*}
where we write $(A\circ B)_{ij} = A_{ip}B_{pj}$, for any $2$-tensors $A,B$, and $A_\sym = \frac{1}{2} (A+A^t)$ denotes the symmetric part of $A$.

 These operators are uniquely determined by the following properties:
\begin{enumerate}
\item The first variation of the quantity 
$$\Fop(\varphi)=\frac{1}{6}\scal -\frac{1}{3} |T|^2 -\frac{1}{6}(\tr T)^2$$
is given by 
$$D_\varphi \Fop (h\diamond\varphi+X\lrcorner\psi) = \langle \hat P_1(\varphi),h\rangle +\langle P_2(\varphi),X\rangle + \textrm{divergence terms},$$
which is analogous to the variation formula \eqref{eqn:var_scal}. In fact, the quantity $\Fop$ is the unique quantity,  linear in the scalar curvature and quadratic in the torsion, and $\hat P$ the unique special Ricci-like operator that satisfy this property.
\item There is a first order, Bianchi-like, operator $\tilde B$, acting on any pair $(h,X)$ of a symmetric $2$-tensor $h$ and $1$-form $X$, such that $\tilde P$ satisfies an analogue of the twice contracted second Bianchi identity \eqref{eqn:twice_bianchi}, in the sense that $(\tilde P_1(\varphi),P_2(\varphi))\in \ker \tilde B$. Moreover, the principal symbol of the linearizations of $P_{-1/3}$ acts on the kernel of the principal symbol of $\tilde B$ as a multiple of the identity. 
\end{enumerate}

The corresponding flows of $G_2$-structures
\begin{align*}
\frac{\partial \varphi}{\partial t} &= \hat P(\varphi),\\
\frac{\partial \varphi}{\partial t} &= \tilde P(\varphi),
\end{align*}
are both included in Theorem \ref{eqn:general_flow}, and can be considered as analogues of the Ricci flow in $G_2$-geometry. Interestingly, although Einstein metrics with positive scalar curvature are homothetically shrinking along Ricci flow, nearly $G_2$ structures, which are Einstein metrics with positive scalar curvature, are homothetically expanding along both these flows. See Subsection \ref{sec:flows} for more details.

Closely related to our work is the work of Streets--Tian \cite{ST} in the setting of Hermitian geometry. In \cite{ST}, they define the Hermitian--Hilbert functional and show that it is the unique functional which leads to a flow of Hermitian metrics driven, up to lower order terms, by the Chern Ricci curvature, which they call the Hermitian curvature flow. In particular, Chern Ricci curvature is a strictly elliptic differential operator; the principal symbol of its linearization has no kernel since there is no diffeomorphism invariance. From the point of view of this paper, it plays a role analogous to special Ricci-like operators. However, it is not clear whether any of the operators $\hat P$ or $\tilde P$ we distinguish in this paper have a similar geometric origin, namely  whether they appear as the Ricci curvature of a special connection induced on the tangent bundle by a $G_2$-structure. Moreover, in contrast to \cite{ST}, critical points of the $G_2$-Hilbert functional - at least those that are torsion free or nearly $G_2$, are saddle points of the functional.

The Hermitian curvature flow reduces to the K\"ahler Ricci flow, if the initial K\"ahler form is closed. This naturally raises the question whether the flows proposed in this paper reduce to the Laplacian flow, if we start with a closed $G_2$-structure. It is important to note that in the Hermitian setting,
vanishing of the torsion of the Chern connection is equivalent to the underlying metric being K\"ahler, so the closed K\"ahler form may still admit a non-trivial evolution by K\"ahler Ricci flow. However, in the $G_2$-setting, the vanishing of the torsion means that the $G_2$-structure is a fixed point of all the flows described above, as well as of the Laplacian flow, while the $G_2$-structure being closed does not imply the vanishing of the torsion.

\large{\bf Acknowledgments:}  The authors would like to thank Shubham Dwivedi and Spiro Karigiannis for many interesting discussions. This research was supported by the Hellenic Foundation for Research
and Innovation (H.F.R.I.) under the “2nd Call for H.F.R.I. Research Projects to support
Faculty Members \& Researchers” (Project Number: HFRI-FM20-2958).

\section{Preliminaries in $G_2$-geometry}
In this section we recall some foundational material on $G_2$ structures on a $7$-manifold, following \cite{flows1,flows2}. In particular, we regard differential forms as skew-symmetric tensors, so given a Riemannian metric $g$ on a smooth $7$-manifold $M$, we will use the convention that given $\alpha,\beta\in \Omega^k$, 
\begin{equation*}
\langle\alpha,\beta\rangle =\alpha_{i_1\cdots i_k} \beta_{i_1\cdots i_k},
\end{equation*}
in abstract index notation.

A $G_2$-structure on a $7$-manifold $M$ is a $3$-form $\varphi$ with the property that for any two vector fields $V,W$ on $M$,
\begin{equation}\label{eqn:B}
B(V,W) =(V \lrcorner \varphi )\wedge (W \lrcorner \varphi) \wedge \varphi \in \Omega^7
\end{equation}
is a non-vanishing $7$-form on $M$. Thus a $G_2$-structure, via $B$, defines an orientation on $M$. Moreover, it defines a Riemannian metric $g_\varphi$ on $M$ such that the relation
$$B(V,W) = -6 g_\varphi(V,W) d\mu_{g_\varphi}$$
holds.

Now, the orientation and Riemannian metric induced by a $G_2$-structure, induce the Hodge operator $*_{\varphi}: \Omega^k \rightarrow \Omega^{7-k}$. We define the dual $4$-from $\psi=*_{\varphi} \varphi$, so that $d\mu_{g_\varphi} = \frac{1}{7} \varphi \wedge \psi$.

We will denote by $\Omega^3_{+}$ the space of all $G_2$-structures on $M$.  Since condition \eqref{eqn:B} is open, it follows that $\Omega^3_{+}$ is an open subset of $\Omega^3$.

\subsection{Contraction identities}
There are several fundamental contraction identities between a $G_2$-structure $\varphi$ and its dual $4$-form $\psi$. Namely,
\begin{equation}\label{eqn:g2identities_1}
\begin{aligned}
\varphi_{ijp} \varphi_{klp} &= g_{ik} g_{jl}-g_{il}g_{jk} - \psi_{ijkl},\\
\varphi_{ipq}\varphi_{jpq}&= 6g_{ij},\\
\varphi_{ijk}\varphi_{ijk}&=42.
\end{aligned}
\end{equation}
\begin{equation}\label{eqn:g2identities_2}
\begin{aligned}
\psi_{ijkl} \psi_{pqkl} = & 4g_{ip}g_{jq} -4 g_{iq}g_{jp} -2 \psi_{ijpq},\\
\psi_{ijkl} \psi_{ajkl} = & 24g_{ia}, \\
\psi_{ijkl} \psi_{ijkl} = & 168.
\end{aligned}
\end{equation}
\begin{equation}\label{eqn:g2identities_3}
\begin{aligned}
\varphi_{ijk}\psi_{abck} =& g_{ia}\varphi_{jbc}+ g_{ib}\varphi_{ajc} + g_{ic}\varphi_{abj}-g_{ja}\varphi_{ibc}-g_{jb}\varphi_{aic} -g_{jc} \varphi_{abi},\\
\varphi_{ijk} \psi_{abjk} =& -4 \varphi_{iab},\\
\varphi_{ijk} \psi_{aijk} = & 0.
\end{aligned}
\end{equation}

\subsection{Decomposition of forms}
A $G_2$-structure $\varphi$ induces the pointwise orthogonal decompositions
\begin{equation}\label{eqn:decompositions}
\begin{aligned}
\Omega^3&=\Omega^3_{1} \oplus \Omega^3_{27} \oplus \Omega^3_7\\
\Omega^2&=\Omega^2_{14}\oplus \Omega^2_7
\end{aligned}
\end{equation}
where $\Omega^k_l$ has pointwise dimension $l$. 

In particular, any $3$-form $\omega= \frac{1}{6}\omega_{ijk} dx^i\wedge dx^j\wedge dx^k$ can be decomposed as 
\begin{equation}\label{eqn:hX_decomposition}
\omega= h\diamond \varphi + X\lrcorner \psi
\end{equation}
where $h\in \mathcal S^2 =\Gamma(S^2(T^*M))$ is a symmetric $2$-tensor and $X$ a $1$-form on $M$, so that in local coordinates 
\begin{align*}
h\diamond \varphi &= \frac{1}{6} (h\diamond \varphi)_{ijk} dx^i \wedge dx^j \wedge dx^k\\
X\lrcorner \psi&=\frac{1}{6} (X\lrcorner \psi)_{ijk}dx^i \wedge dx^j \wedge dx^k,
\end{align*}
where
$$(h\diamond \varphi)_{ijk} = g^{pq}(h_{ip} \varphi_{qjk} + h_{jp}\varphi_{iqk} + h_{kp}\varphi_{ijq}),$$
and 
$$(X\lrcorner \psi)_{ijk} = g^{ml}X_m \psi_{lijk}.$$
Thus, in abstract index notation, 
\begin{align*}
(h\diamond\varphi)_{ijk}&= h_{ip}\varphi_{pjk} + h_{jp} \varphi_{ipk} +h_{kp}\varphi_{ijp},\\
(X\lrcorner\psi)_{ijk} &= X_m \psi_{mijk}.
\end{align*}
The decomposition of $\Omega^3_{1}\oplus\Omega^3_{27}$ in \eqref{eqn:decompositions} corresponds to the trace and trace-free part of $h$ respectively, while $X$ corresponds to the $\Omega^3_7$ summand. In more detail, we can easily that 
$$(fg)\diamond\varphi=3f (g\diamond\varphi)$$
and if $\eta\in \Omega^3_7$ then 
\begin{equation}\label{eqn:eta_to_X}
\begin{aligned}
\eta&=X\lrcorner\psi,\\
X_m&=\frac{1}{24}  \eta_{abc} \psi_{abcm},
\end{aligned}
\end{equation}
which follows by contracting the first equation in \eqref{eqn:eta_to_X} by $\psi$ and applying the corresponding contraction identity.

\begin{remark}\label{rmk:L2_ips}
Decomposition \eqref{eqn:hX_decomposition} defines an isomorphism between the vector bundles $S^2(T^*M)\oplus T^*M$ and $\Lambda^3(M)$, via the assignment
\begin{equation}\label{eqn:isomorphism}
(h,X) \mapsto h\diamond\varphi+X\lrcorner \psi.
\end{equation}
The bundle $\Lambda^3(M)$ is equipped with the natural inner-product arising from $3$-forms being skew-symmetric tensors, while $S^2(T^*M)\oplus T^*M$ comes with the natural inner-product
$$\langle (h,X),(w,Y)\rangle = \langle h,w\rangle+\langle X,Y\rangle.$$
It is important to observe that isomorphism \eqref{eqn:isomorphism} \textit{is not} an isometry, see for instance Proposition 2.27 in \cite{flows2}. To see this, observe that applying the contraction identities \eqref{eqn:g2identities_1}, \eqref{eqn:g2identities_2} and \eqref{eqn:g2identities_3} we obtain that for smooth functions $\alpha,\beta$, symmetric $2$-tensors $h,w$ with 
\begin{align*}
h&= h_0 + \frac{\tr h}{7} g,\\
w&=w_0+\frac{\tr w}{7} g,
\end{align*}
where $h_0,w_0$ are their trace-free parts, and $1$-forms $X,Y$, we have
\begin{equation*}
\begin{aligned}
\langle h\diamond \varphi + X\lrcorner\psi,w\diamond \varphi + Y\lrcorner\psi \rangle&= \frac{54}{7} \tr h \tr w +12\langle h_0,w_0\rangle + 24 \langle X,Y\rangle,\\
&=378 \langle \frac{\tr h}{7} g, \frac{\tr w}{7} g\rangle +12\langle h_0,w_0\rangle + 24 \langle X,Y\rangle.
\end{aligned}
\end{equation*}
\end{remark}

\medskip{}

In order to describe the decomposition $\Omega^2=\Omega^2_{14}\oplus \Omega^2_7$ it is useful to define the operator $\Pop:\Omega^2 \rightarrow \Omega^2$ as $\Pop(\eta)_{ij}=\eta_{ab}\psi_{abij}$, which is self-adjoint since
$$\langle \Pop(\alpha),\beta\rangle = \langle \alpha,\Pop(\beta)\rangle.$$
The decomposition $\Omega^2_{14}\oplus \Omega^2_7$ corresponds to the eigenvalues $2$ and $-4$ of $\Pop$ respectively. In particular $\alpha \in \Omega^2_{14}$ if and only if $\alpha_{ij}\varphi_{ijk}=0$
and every $\alpha\in\Omega^2_7$ is of the form $\alpha_{ij}=X_m\varphi_{mij}$ for $X_m=\frac{1}{6}\alpha_{ij}\varphi_{ijm}$. 

It will be convenient to introduce the operator $\Vop:\mathcal T^2 \rightarrow \Omega^1$, so that for any $2$-tensor $\alpha$ 
$$\Vop(\alpha)_k=\alpha_{ij} \varphi_{ijk}.$$
It is not hard to verify that $\alpha = \frac{1}{6}\Vop(\alpha)\lrcorner \varphi$ for any $\alpha \in \Omega^2_7$.

Finally, we define the operator $\curl$ acting on any $k$-tensor $A$ as follows
$$\curl(A)_{i_1\cdots i_k} = \nabla_a A_{b i_1 \cdots i_{k-1}} \varphi_{abi_k}.$$ 

\subsection{Torsion and the $G_2$-Bianchi identity}
Let $X$ be a vector field on a $7$-manifold $M$ with a $G_2$-structure $\varphi$. Then $\nabla_X \varphi\in\Omega^3_7$, see for instance \cite{flows1}, so there is a $2$-tensor $T$ such that $$\nabla_X \varphi_{ijk} = X_m T_{ml}\psi_{lijk}.$$ We call $T$ the torsion of the $G_2$-structure. In fact, using \eqref{eqn:eta_to_X}, we obtain that
$$T_{pq}=\frac{1}{24} \nabla_p \varphi_{ijk} \psi_{ijkq}.$$
The covariant derivative of $\psi$ can also be expressed in terms of the torsion via the identity
\begin{equation*}
\nabla_p \psi_{ijkl} = -T_{pi} \varphi_{jkl} + T_{pj}\varphi_{ikl} -T_{pk}\varphi_{ijl} + T_{pl}\varphi_{ijk}.
\end{equation*}

The covariant derivative of the torsion satisfies the following important identity, due to Karigiannis \cite{flows1}, and is a consequence of the diffeomorphism invariance of the torsion.
\begin{equation}\label{eqn:g2_bianchi}
\nabla_i T_{jk} - \nabla_j T_{ik} =\left( T_{ia} T_{jb} +\frac{1}{2} R_{ijab}\right) \varphi_{abk}
\end{equation}
We will refer to identity \eqref{eqn:g2_bianchi} as the $G_2$-Bianchi identity.

Taking various contractions and traces of \eqref{eqn:g2_bianchi} we obtain the following identities. 
\begin{proposition}
[Corollary 5.18 in \cite{flows2}]\label{prop:g2bianchi_identities}
\begin{align*}
\scal &= -2\Div \Vop T+(\tr T)^2 - \langle T,T^t\rangle - \langle T,\Pop(T)\rangle, \\
\ric &= - (K_2)_\sym -\frac{1}{2}\mathcal L_{\Vop T} g +\tr T T_\sym - (T^2)_\sym, \\
F&= 4 (K_3)_\sym - 2 (T\oct T)_\sym\\
\Div T^t &=\nabla \tr T+T(\Vop T),  \\
\langle \nabla T,\psi\rangle&= \tr T \Vop T - \Vop (T^2) -T^t(\Vop T),
\end{align*}
where $F_{jk}=R_{abcd}\varphi_{abj}\varphi_{cdk}$ is a symmetric $2$-tensor and
\begin{align*}
(K_2)_{ab} &= \nabla_p T_{aq}\varphi_{pbq},\\
(K_3)_{ab} &= \nabla_p T_{qa} \varphi_{pqb},\\
\langle\nabla T,\psi\rangle_l &= \nabla_i T_{jk} \psi_{ijkl},\\
(T\oct T)_{pq} &= T_{im} T_{jn} \varphi_{ijp}\varphi_{mnq}.
\end{align*}
\end{proposition}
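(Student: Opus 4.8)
The plan is to derive all five identities by contracting the $G_2$-Bianchi identity \eqref{eqn:g2_bianchi} against $\varphi$, against $\psi$, and against the metric, and then simplifying with the contraction identities \eqref{eqn:g2identities_1}--\eqref{eqn:g2identities_3}, the covariant derivative formula $\nabla_p\varphi_{ijk}=T_{pl}\psi_{lijk}$ together with its analogue for $\psi$, and the first Bianchi identity for the Riemann tensor. The guiding structural observation is that whenever the curvature term $\tfrac12 R_{ijab}\varphi_{abk}$ in \eqref{eqn:g2_bianchi} is contracted so that $R$ meets a totally antisymmetric factor ($\varphi$ or $\psi$) in three or four of its slots, the first Bianchi identity forces that contraction either to vanish or to collapse to Ricci/scalar terms; the torsion quadratic $T_{ia}T_{jb}\varphi_{abk}$ is then handled purely algebraically by the same contraction identities.

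The cleanest case is the formula for $F$. Contracting \eqref{eqn:g2_bianchi} with $\varphi_{ijl}$, the antisymmetry of $\varphi$ turns the left-hand side into $2\nabla_i T_{jk}\varphi_{ijl}=2(K_3)_{kl}$; the curvature term becomes $\tfrac12 R_{ijab}\varphi_{ijl}\varphi_{abk}=\tfrac12 F_{lk}$ directly from the definition of $F$; and the quadratic term becomes $(T\oct T)_{lk}$ by matching indices with the definition of $\oct$. Taking symmetric parts and using that $F$ is symmetric yields the stated identity.

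For $\Div T^t$ I would trace \eqref{eqn:g2_bianchi} over the indices $j=k$. The curvature contribution $R_{ijab}\varphi_{abj}$ vanishes because contracting $R$ against $\varphi$ in three of its slots extracts a totally antisymmetric part, which is zero by the first Bianchi identity; the quadratic term collapses directly to $T(\Vop T)$ using the definition of $\Vop$; and the left-hand side reads $\nabla\tr T-\Div T^t$. The identity for $\langle\nabla T,\psi\rangle$ is analogous, contracting \eqref{eqn:g2_bianchi} with $\psi_{ijkl}$: the left side becomes $2\langle\nabla T,\psi\rangle_l$, and after applying \eqref{eqn:g2identities_3} the curvature term splits into pieces that are either the Ricci tensor contracted with $\varphi$ (vanishing by symmetry) or a totally antisymmetric contraction of $R$ (vanishing by the first Bianchi identity), while the torsion quadratic reduces to $\tr T\,\Vop T-\Vop(T^2)-T^t(\Vop T)$.

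Finally, the scalar and Ricci formulas come from contracting \eqref{eqn:g2_bianchi} with $\varphi_{ijk}$ and with $\varphi_{pjk}$ respectively, applying \eqref{eqn:g2identities_1} to the curvature term, which produces the scalar curvature in the fully contracted case and the Ricci tensor in the once-contracted case (the $\psi$-contraction of $R$ dropping out by the first Bianchi identity). The main subtlety, and where most of the computation resides, is on the left-hand side: to recognize $\Div\Vop T$ and the Lie derivative $\mathcal L_{\Vop T}g$ one must commute covariant derivatives past $\varphi$ using $\nabla_p\varphi_{ijk}=T_{pl}\psi_{lijk}$, which generates extra quadratic-torsion corrections that must be carried and combined with the quadratic terms coming from $T_{ia}T_{jb}\varphi_{abk}$. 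I expect the principal obstacle throughout to be exactly this bookkeeping: verifying that the various $\varphi$- and $\psi$-contractions of the torsion quadratics collapse, with the correct coefficients and symmetrizations, to the stated combinations $(T^2)_\sym$, $\tr T\,T_\sym$, $\langle T,T^t\rangle$, $\langle T,\Pop(T)\rangle$ and $\Vop(T^2)$.
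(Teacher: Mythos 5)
Your proposal is correct and follows essentially the same route as the paper, which does not reprove these identities but cites Corollary 5.18 of \cite{flows2} and notes that they are obtained precisely by taking the various contractions and traces of the $G_2$-Bianchi identity \eqref{eqn:g2_bianchi} that you describe (full contraction with $\varphi_{ijk}$ for $\scal$, contraction with $\varphi_{pjk}$ for $\ric$, with $\varphi_{ijl}$ for $F$, the trace $j=k$ for $\Div T^t$, and contraction with $\psi_{ijkl}$ for $\langle\nabla T,\psi\rangle$), simplified via the contraction identities, $\nabla\varphi = T\lrcorner\psi$, and the first Bianchi identity. You have also correctly located the only real work, namely carrying the quadratic-torsion corrections generated when $\nabla$ is moved past $\varphi$ and matching them to the stated combinations.
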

Using the contraction identities we obtain the useful identities
\begin{align}
T^2 - T\circ T^t - T\circ \Pop (T)&= T\circ (\Vop T\lrcorner \varphi),\label{eqn:id1}\\
|T|^2-\langle T,T^t\rangle  -\langle T,\Pop(T)\rangle&=|\Vop T|^2 = -\tr (T\circ (\Vop \lrcorner \varphi)). \label{eqn:id2}
\end{align}
To prove \eqref{eqn:id1}, we compute
\begin{align*}
T_{jm} T_{mk} - T_{jm} T_{km} - (T \circ \Pop(T))_{jk} &= T_{jm} T_{ab} (g_{am} g_{bk} - g_{ak} g_{bm}-\psi_{abmk}),\\
&=T_{jm} T_{ab} \varphi_{abl} \varphi_{mkl},\\
&=T_{jm}(\Vop T)_l \varphi_{lmk},
\end{align*}
and taking the trace proves \eqref{eqn:id2}.

We can thus rewrite the expression of the scalar curvature in terms of the torsion in Proposition \ref{prop:g2bianchi_identities} as
\begin{equation} \label{eqn:scalar_torsion}
\scal = -2\Div \Vop T +(\tr T)^2 + |\Vop T|^2 -|T|^2.
\end{equation}

\subsection{Variations of $G_2$-structures}
Let $\{\varphi_t\}_{t\in (-\epsilon,\epsilon)}$ be a smooth $1$-parameter family of $G_2$-structures on $M$, inducing the family of Riemannian metrics $\{g_t\}_{t\in (-\epsilon,\epsilon)}$, such that $\varphi_0=\varphi$, $g=g_0$ and $\psi=*_g \varphi$. Then by \eqref{eqn:hX_decomposition} we have that
\begin{align} \label{variationofaG2structure}
\left. \frac{\partial}{\partial t}\right|_{t=0} \varphi_t = h \diamond \varphi + X \lrcorner \psi
\end{align}
for some symmetric $2$-tensor $h$ and $1$-form $X$ on $M$, where the diamond operator $\diamond$ is defined in terms of $\varphi$.

The induced infinitesimal variation of the Riemannian metrics $g_t$, suppressing the parameter $t$, is then computed in \cite{flows1} as
\begin{equation}\label{eqn:metric_evol}
\left. \frac{\partial}{\partial t}\right|_{t=0} g_{ij} = 2h_{ij},
\end{equation}
while the induced variation of the inverse of the metric is given by
$$ \left. \frac{\partial}{\partial t}\right|_{t=0}  g^{ij} = -2h^{ij}, $$
where $h^{ij}=g^{ia}g^{jb}h_{ab}$. 

The variation of the volume form $ d \mu $ is
\begin{align*}
\left. \frac{\partial}{\partial t}\right|_{t=0}  d \mu = \tr h d \mu. 
\end{align*}
Also from \cite{flows1}, the variation the dual forms $\psi_t$ is given by
\begin{align*}
\left. \frac{\partial}{\partial t}\right|_{t=0} \psi_{ijkl}= & h_{im}\psi_{mjkl} + h_{jm} \psi_{imkl} + h_{km}\psi_{ijml} + h_{lm} \psi_{ijkm}\\
& -X_i \phi_{jkl} + X_j \phi_{ikl} - X_k \phi_{ijl} + X_l \phi_{ijk}.
\end{align*} 
and the variation of the torsion tensor of a $G_2$-structure is given by
\begin{equation}\label{eqn:torsion_variation}
\left. \frac{\partial}{\partial t}\right|_{t=0}  T_{ij} =  T_{ia}h_{aj} + T_{ia}X_l \varphi_{laj} + \nabla_ah_{bi}\phi_{abj} + \nabla_i X_j.
\end{equation}

A special, and very important, instance of a variation of a $G_2$-structure is the one induced by the flow of a vector $V$ on $M$. A direct computation (see for instance Section 2.7 in \cite{flows2}) leads to the identity
\begin{equation}\label{eqn:Lie_phi}
\mathcal L_{V}\varphi = \frac{1}{2}\mathcal L_{V} g \diamond \varphi + \left(-\frac{1}{2}\curl(V) +V\lrcorner T\right) \lrcorner \psi.
\end{equation}
In particular, the case $V=\Vop T$ will play a crucial role throughout this paper. From Corollary 5.28 in \cite{flows2} and Proposition \ref{prop:g2bianchi_identities} we know that
\begin{equation*}
\begin{aligned}
\curl( \Vop T)&= \Div T^t - \Div T+T^t(\Vop T) - T(\Vop T),\\
&=\nabla \tr T -\Div T+T^t (\Vop T).
\end{aligned}
\end{equation*}
Therefore, we obtain the identity
\begin{equation}\label{eqn:LieVopT}
\mathcal L_{\Vop T} \varphi = \frac{1}{2}\mathcal L_{\Vop T} g \diamond \varphi +\left(\frac{1}{2}\left(\Div T- \nabla \tr T +T^t(\Vop T)\right) \right) \lrcorner\psi.
\end{equation}

\subsection{Quasilinear second order differential operators on $G_2$-structures.}\label{sec:diff_ops}

Given a $7$-manifold $M$ admitting $G_2$-structures, by \cite{flows2} there are only six quasilinear differential operators of second-order acting on the space $\Omega^3_{+}$ of the $G_2$-structures on $M$, that correspond to an element of the tangent space $\Omega^3=\Omega^3_{1+27}\oplus \Omega^3_7$ of $\Omega^3_{+}$, whose linearizations have linearly independent principal symbols. 

These are the symmetric $2$-tensors $\ric$, $\scal\; g$, $\mathcal L_{\Vop T} g$, $F$, where $F_{jk}=R_{abcd}\varphi_{abj}\varphi_{cdk}$, and the $1$-forms $\Div T$ and $\nabla \tr T$. By Proposition \ref{prop:g2bianchi_identities} they can all be expressed in terms of the covariant derivative of the torsion $T$ of a $G_2$-structure. Moreover, note that by Proposition \ref{prop:g2bianchi_identities}, $\nabla \tr T$, $\Div T^t$ coincide up to lower order terms, so they do not have linearly independent principal symbols.

\begin{definition}
A (non-linear) differential operator $P:\Omega^3_+\rightarrow \Omega^3$ is called special Ricci-like operator if
\begin{equation*}
P(\varphi)=(-\ric+a\mathcal L_{\Vop T} g+\textrm{lots})\diamond \varphi+((1+a) \Div T-a\nabla \tr T +\textrm{lots})\lrcorner \psi,
\end{equation*}
where $\textrm{``lots"}$ above stands for ``lower order terms", namely terms that depend only on the torsion of $\varphi$, which we require to be quadratic on the torsion, by scaling considerations.
\end{definition}
The main motivation to focus on these operators is that their principal symbol has a behaviour very similar to the symbol the Ricci tensor as an operator on Riemannian metrics. 

To describe this behaviour, let $x\in M$, $\xi \in T^*_x M$, $\xi\not = 0$ and define the operator 
$$ B_\xi: S^2(T^*_x M)\oplus T^*_x M \rightarrow S^2(T^*_x M)\oplus T^*_x M$$ by
$$B_\xi(h,X) = (1+a) h(\xi) - \left(a+\frac{1}{2}\right) \xi \tr h - a \Vop(\xi\otimes X).$$ 

\begin{proposition}[Proposition 6.42 in \cite{flows2}]\label{prop:special_rl_symbol}
Let $M$ be a $7$-manifold with a $G_2$-structure $\varphi$. Then, for any $x\in M$, $\xi\in T^*_x M$ and $(h,X)\in S^2(T^*_x M)\oplus T^*_x M$,
\begin{equation*}
\begin{aligned}
&\sigma_\xi(D_\varphi P) (h\diamond \varphi+X\lrcorner\psi)= \left(|\xi|^2 h + ( \xi \otimes B_\xi(h,X))_\sym\right)\diamond\varphi +\left(|\xi|^2 X +\Vop(\xi \otimes B_\xi(h,X))\right)\lrcorner \psi.
\end{aligned}
\end{equation*}

In particular $\sigma_\xi (D_\varphi P)$, viewed as an operator in $S^2(T_x^*M)\oplus T_x^*M$ via the isomorphism \eqref{eqn:isomorphism}, leaves $\ker  B_\xi$ invariant, its kernel is transverse to $\ker B_\xi$ and is only due to the diffeomorphism invariance of $P$. Moreover, $$\sigma_\xi (D_\varphi P)(h\diamond \varphi +X\lrcorner \psi) = |\xi|^2 (h\diamond \varphi +X\lrcorner \psi),$$
for every $(h,X)\in\ker B_\xi$.
\end{proposition}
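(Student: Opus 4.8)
The plan is to reduce the computation to the principal symbol of the linearized torsion and then propagate it through the Bianchi-type identities of Proposition \ref{prop:g2bianchi_identities}. Since the lower order terms of a special Ricci-like operator depend only on the torsion and are of strictly lower differential order, they do not affect $\sigma_\xi(D_\varphi P)$, so I may replace $P$ by its principal part $(-\ric + a\mathcal L_{\Vop T}g)\diamond\varphi + ((1+a)\Div T - a\nabla\tr T)\lrcorner\psi$. Moreover, $\diamond\varphi$ and $\lrcorner\psi$ are algebraic in $\varphi$, so linearizing them contributes only zeroth order terms; hence $\sigma_\xi(D_\varphi P)(h\diamond\varphi + X\lrcorner\psi)$ is obtained by applying the unlinearized $\diamond\varphi$ and $\lrcorner\psi$ to the symbols of the linearized coefficient tensors. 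The starting point is \eqref{eqn:torsion_variation}: retaining only the first order terms in $(h,X)$ and replacing $\nabla$ by $\xi$ gives the symbol of the linearized torsion
\[
\tau_{ij} := \sigma_\xi(D_\varphi T)(h,X)_{ij} = \xi_a h_{bi}\varphi_{abj} + \xi_i X_j .
\]

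Next I would express the symbol of each second order ingredient in terms of $\tau$. Writing $\ric = -(K_2)_\sym - \tfrac12\mathcal L_{\Vop T}g + \textrm{lots}$ via Proposition \ref{prop:g2bianchi_identities}, and noting that $\Div T$, $\nabla\tr T$, $\mathcal L_{\Vop T}g$ and $K_2$ are all first order contractions of $\nabla T$, their symbols are the same contractions applied to $\xi\otimes\tau$. Carrying out these contractions with \eqref{eqn:g2identities_1}--\eqref{eqn:g2identities_3} yields compact expressions, for instance
\[
\Vop\tau = \tr h\,\xi - h(\xi) + \Vop(\xi\otimes X), \qquad \sigma_\xi(\Div T) = |\xi|^2 X + \Vop(\xi\otimes h(\xi)), \qquad \sigma_\xi(\nabla\tr T) = \langle\xi,X\rangle\,\xi ,
\]
together with the analogous formula for $\sigma_\xi(K_2)_\sym$. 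Assembling these with the coefficients $(-1,a)$ on the $\diamond\varphi$ side and $(1+a,-a)$ on the $\lrcorner\psi$ side, and repeatedly using the collapsing identity $\Vop(\xi\otimes\Vop(\xi\otimes X)) = \langle\xi,X\rangle\xi - |\xi|^2 X$ to reorganize the $\Omega^3_7$ component, the whole expression regroups so that a single covector -- precisely $B_\xi(h,X)$ -- appears in both components, giving the stated formula. Verifying that every residual term recombines into $\xi\otimes B_\xi(h,X)$ with exactly the coefficients defining $B_\xi$ is the main obstacle, and is where the contraction identities must be deployed carefully.

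Granting the symbol formula, the three structural claims are immediate. Substituting $B_\xi(h,X)=0$ gives $\sigma_\xi(D_\varphi P)(h\diamond\varphi + X\lrcorner\psi) = |\xi|^2(h\diamond\varphi + X\lrcorner\psi)$, which simultaneously establishes the action by $|\xi|^2$ on $\ker B_\xi$ and the invariance of $\ker B_\xi$, since the output is a scalar multiple of the input. For the kernel I would invoke the diffeomorphism invariance of $P$: differentiating $P(\phi_t^*\varphi)=\phi_t^*P(\varphi)$ along the flow of a vector field $V$ yields $D_\varphi P(\mathcal L_V\varphi)=\mathcal L_V P(\varphi)$, whose right hand side is only first order in $V$. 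Comparing top order symbols shows that $\sigma_\xi(D_\varphi P)$ annihilates the $7$-dimensional image of the map $V\mapsto (\xi\otimes V)_\sym\diamond\varphi - \tfrac12\Vop(\xi\otimes V)\lrcorner\psi$, which is the symbol of $V\mapsto\mathcal L_V\varphi$ read off from \eqref{eqn:Lie_phi}.

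Finally I would identify this image with the entire kernel by a transversality and dimension count. Evaluating $B_\xi$ on a diffeomorphism direction, $B_\xi\big((\xi\otimes V)_\sym,\, -\tfrac12\Vop(\xi\otimes V)\big)$ is a nonzero multiple of $|\xi|^2 V$; hence $B_\xi$ is surjective, so $\dim\ker B_\xi = 28$, and the $7$-dimensional diffeomorphism image meets $\ker B_\xi$ only at the origin. Thus $S^2(T^*_xM)\oplus T^*_xM = \ker B_\xi \oplus (\text{diffeomorphism image})$. Since $\sigma_\xi(D_\varphi P)$ equals $|\xi|^2\,\mathrm{Id}$ on $\ker B_\xi$ (in particular it preserves $\ker B_\xi$) and vanishes on the diffeomorphism image, any element $w=w_1+w_2$ of its kernel satisfies $\sigma_\xi(D_\varphi P)(w)=|\xi|^2 w_1=0$, forcing $w_1=0$. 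Therefore the kernel is exactly the $7$-dimensional diffeomorphism image and is transverse to $\ker B_\xi$, as claimed.
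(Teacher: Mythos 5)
The paper offers no proof of this proposition (it is imported verbatim from \cite{flows2}), so your argument can only be judged on its own terms. Your strategy is the natural one and almost certainly the intended one: linearize the torsion to get $\tau_{ij}=\xi_a h_{bi}\varphi_{abj}+\xi_i X_j$, push $\xi\otimes\tau$ through the contractions defining $\ric$, $\mathcal L_{\Vop T}g$, $\Div T$, $\nabla\tr T$, and assemble. Every intermediate symbol you record is correct ($\Vop\tau=\tr h\,\xi-h(\xi)+\Vop(\xi\otimes X)$, $\sigma_\xi(\Div T)=|\xi|^2X+\Vop(\xi\otimes h(\xi))$, $\sigma_\xi(\nabla\tr T)=\langle\xi,X\rangle\xi$, and the collapsing identity), and the structural deductions in your last two paragraphs are clean: $B_\xi\bigl((\xi\otimes V)_\sym,-\tfrac12\Vop(\xi\otimes V)\bigr)=\tfrac12|\xi|^2V$ is indeed a nonzero multiple of $V$, so $B_\xi$ is surjective, $\dim\ker B_\xi=28$, the diffeomorphism image is a complement, and the kernel identification follows.

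The gap is that the one step carrying all the content --- checking that the residual terms recombine into $\xi\otimes B_\xi(h,X)$ with exactly the stated coefficients --- is the step you defer, and when carried out it does not produce the displayed formula. With your own ingredients the $\Omega^3_7$-component does come out as $|\xi|^2X+\Vop(\xi\otimes B_\xi(h,X))$, but the $S^2$-component comes out as $|\xi|^2h-2(\xi\otimes B_\xi(h,X))_\sym$, not $|\xi|^2h+(\xi\otimes B_\xi(h,X))_\sym$: already for $a=0$, $X=0$, since $\delta g=2h$ one has $\sigma_\xi(-D_\varphi\ric)(h,0)=|\xi|^2h-2(\xi\otimes h(\xi))_\sym+\tr h\,\xi\otimes\xi=|\xi|^2h-2(\xi\otimes B_\xi(h,0))_\sym$. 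Your final paragraph should have alerted you to this: you prove $B_\xi=\tfrac12|\xi|^2V\neq0$ on the diffeomorphism directions, yet substituting those directions into the formula as printed yields $\tfrac32|\xi|^2(\xi\otimes V)_\sym\diamond\varphi\neq0$, contradicting the kernel claim you then derive from diffeomorphism invariance. The self-consistent version is
\begin{equation*}
\sigma_\xi(D_\varphi P)(h\diamond\varphi+X\lrcorner\psi)=|\xi|^2(h\diamond\varphi+X\lrcorner\psi)+\bigl(-2(\xi\otimes B_\xi(h,X))_\sym\bigr)\diamond\varphi+\Vop(\xi\otimes B_\xi(h,X))\lrcorner\psi,
\end{equation*}
whose correction term is precisely $\sigma_\xi(V\mapsto\mathcal L_V\varphi)$ applied to $-2B_\xi(h,X)$; with that version all of your subsequent deductions go through verbatim. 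So you must either complete the assembly and reconcile (or flag) the normalization of the printed statement, or accept that the formula you are ``granting'' is not the one your computation produces.
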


As we mentioned in the introduction, our interest in using special-Ricci-like operators to define a flow of $G_2$-structures comes from the expectation that the behaviour described in Proposition \ref{prop:special_rl_symbol} implies that geometric quantities evolve, modulo lower order terms, by the heat equation. Below we verify that this is indeed the case, at least for the scalar curvature and the torsion of the evolving $G_2$-structure.

\begin{proposition}
Suppose that the $1$-parameter family $\varphi(t)$, $t\in I$, of $G_2$-structures, induces the family of Riemannian metrics $g(t)$ and evolves by 
\begin{equation}\label{eqn:srl_flow}
\frac{\partial \varphi}{\partial t} = (-\ric +a\mathcal L_{\Vop T} g +lots)\diamond \varphi + ((1+a) \Div T -a \nabla \tr T + lots)\lrcorner \psi.
\end{equation}
Then the scalar curvature and the torsion of the evolving $G_2$-structures satisfy evolution equations of the form
\begin{align}\label{eqn:scal_srl}
\left(\frac{\partial}{\partial t} -\Delta_{g(t)} \right) \scal &= lots,\\
\left(\frac{\partial}{\partial t} -\Delta_{g(t)} \right) T_{ij}&= lots.\label{eqn:torsion_srl}
\end{align}
\end{proposition}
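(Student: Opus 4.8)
The plan is to exploit the observation, made in the discussion preceding the statement, that via \eqref{eqn:metric_evol} the flow \eqref{eqn:srl_flow} induces on the underlying metric the evolution
\[
\frac{\partial g}{\partial t}=-2\ric+\mathcal L_{2a\Vop T}g+lots,
\]
namely the Ricci flow modified by the diffeomorphism generated by $2a\Vop T$, up to torsion-quadratic (hence first order) terms. The organising principle I would use throughout is that a diffeomorphism direction contributes to the evolution of \emph{any} geometric quantity $Q$ only the Lie derivative $\mathcal L_{2a\Vop T}Q$, whose linearisation is of strictly lower order than the rough Laplacian of $Q$ (it differentiates $Q$ once and contracts against the first order field $\Vop T$). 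Thus both the DeTurck term and the $lots$ may be discarded at top order, and it suffices to extract the leading contribution of the genuinely elliptic direction ($-2\ric$ for the metric, the special Ricci-like operator for $\varphi$).

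For \eqref{eqn:scal_srl} this is quick, since $\scal$ depends only on $g$. Differentiating with \eqref{eqn:var_scal} in the direction $-2\ric$ gives $2\Delta\scal+2|\ric|^2-2\Div\Div\ric$, and the twice-contracted second Bianchi identity \eqref{eqn:twice_bianchi} turns $\Div\Div\ric$ into $\tfrac12\Delta\scal$, leaving exactly $\Delta\scal+2|\ric|^2$. The diffeomorphism summand contributes $\mathcal L_{2a\Vop T}\scal=2a\langle\nabla\scal,\Vop T\rangle$ by the diffeomorphism invariance of $\scal$, while the $lots$ in the metric evolution enter through $\Div\Div$ and $\Delta\tr$ of torsion-quadratic tensors; all of these linearise to order strictly below that of $\Delta\scal$ and are absorbed into $lots$, giving \eqref{eqn:scal_srl}.

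For \eqref{eqn:torsion_srl} I would start from the variation of torsion \eqref{eqn:torsion_variation} applied to $\dot\varphi=h\diamond\varphi+X\lrcorner\psi$ with $h=-\ric+a\mathcal L_{\Vop T}g+lots$ and $X=(1+a)\Div T-a\nabla\tr T+lots$. The algebraic terms $T_{ia}h_{aj}$ and $T_{ia}X_l\varphi_{laj}$ are products of the torsion with a second order quantity, hence $lots$, so only $\nabla_a h_{bi}\varphi_{abj}+\nabla_i X_j$ survives at top order. Substituting the expression for $\ric$ from Proposition \ref{prop:g2bianchi_identities}, every surviving term becomes a contraction of $\nabla^2 T$ against $\varphi$ and $g$, and the task becomes to show that this specific combination equals the rough Laplacian $\nabla^p\nabla_p T_{ij}$ modulo $lots$. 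I would carry this out by commuting the two covariant derivatives and invoking the $G_2$-Bianchi identity \eqref{eqn:g2_bianchi}, together with the contraction identities \eqref{eqn:g2identities_1}--\eqref{eqn:g2identities_3}, to re-express each off-diagonal contraction of $\nabla^2 T$ in terms of $\nabla^p\nabla_p T$; the Riemann curvature produced by the commutation is, again through \eqref{eqn:g2_bianchi} and Proposition \ref{prop:g2bianchi_identities}, reabsorbed into $lots$.

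The genuinely laborious step, and the only real obstacle, is this last reduction for the torsion: tracking how the differently contracted second derivatives $\nabla_a(K_2)_{bi}\varphi_{abj}$, $\nabla_a(\mathcal L_{\Vop T}g)_{bi}\varphi_{abj}$, $\nabla_i(\Div T)_j$ and $\nabla_i\nabla_j\tr T$ recombine, and verifying that the $a$-dependent contributions of the DeTurck direction cancel precisely against the off-diagonal contractions. This is the $G_2$-analogue of the Ricci flow computation in which the DeTurck term converts the non-elliptic second variation of $\ric$ into a rough Laplacian; here Proposition \ref{prop:special_rl_symbol} already guarantees the outcome at the level of principal symbols, since it shows the flow symbol equals $|\xi|^2$ on $\ker B_\xi$ while its remaining kernel is purely diffeomorphic. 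Consequently the computation is a verification rather than a discovery, but the index bookkeeping is substantial and is where I expect the real work to lie.
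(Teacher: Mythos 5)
Your treatment of the scalar curvature is correct and, if anything, cleaner than the paper's: the paper expands $\Div\Div(\mathcal L_{\Vop T}g)$ and $\Delta\tr(\mathcal L_{\Vop T}g)$ explicitly and checks that the $a$-dependent fourth-order contributions cancel, whereas you dispose of the entire gauge direction at once via the naturality identity $D_g\scal(\mathcal L_V g)=\mathcal L_V\scal$. Both give $(\partial_t-\Delta)\scal = lots$. Your reduction of the torsion evolution to the single combination $\nabla_a h_{bi}\varphi_{abj}+\nabla_i X_j$ and the observation that the $a$-dependent second-order pieces must cancel between $a\,\nabla_a(\mathcal L_{\Vop T}g)_{bi}\varphi_{abj}$ and $\nabla_i X_j$ also match the paper's computation \eqref{eqn:torsion_srl_flow}.

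The gap is in your claim that, once everything is written as contractions of $\nabla^2 T$, ``the Riemann curvature produced by the commutation is \dots reabsorbed into $lots$.'' Two different kinds of curvature terms arise, and only one of them is lower order. Genuine commutators $[\nabla,\nabla]T=\riem\ast T$ are indeed $lots$ (at most two derivatives of $\varphi$ on any factor). But the reduction cannot be done by commuting derivatives and contraction identities alone: to relate, e.g., $\nabla_a\nabla_p T_{bq}\varphi_{piq}\varphi_{abj}$ to $\nabla_p\nabla_p T_{ij}$ you must apply the $G_2$-Bianchi identity \eqref{eqn:g2_bianchi} \emph{under} an outer covariant derivative, and this produces terms of the form $\nabla_a R_{aipq}\varphi_{pqj}$, i.e.\ first derivatives of curvature. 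These are third order in $\varphi$ --- exactly the same order as $\Delta T_{ij}$ and as the $-\nabla_a R_{bi}\varphi_{abj}$ contributed directly by the $-\ric$ part of the flow --- so they cannot be absorbed into $lots$. The proposition holds because, after the second Bianchi identity converts $\nabla_a R_{aipq}\varphi_{pqj}$ into $-2\nabla_p R_{qi}\varphi_{pqj}$, these terms cancel \emph{exactly} against the flow's $\nabla\ric\ast\varphi$ contribution; this is the content of the paper's identity \eqref{eqn:torsion_laplacian}, $\Delta T_{ij}=\nabla_i\Div T_j-\nabla_p R_{qi}\varphi_{pqj}+lots$. As written, your plan would either discard these terms prematurely (in which case the two sides fail to match at third order) or leave the one genuinely delicate cancellation unjustified. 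The fix is to track the $\nabla\riem$ terms explicitly and invoke the contracted second Bianchi identity, as in \eqref{eqn:torsion_laplacian}; the rest of your outline then goes through.
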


\begin{proof}
Under \eqref{eqn:srl_flow}, $\frac{\partial g}{\partial t} = 2h$ for $h=-\ric +a \mathcal L_{\Vop T} g +lots$, thus we obtain (neglecting terms of lower order)
\begin{equation*}
\frac{\partial}{\partial t} \scal = 2(-\Delta \tr h + \Div\Div h -\langle h,\ric\rangle)= \Delta \scal +lots.
\end{equation*}
By \eqref{eqn:scalar_torsion} and the twice contracted second Bianchi identity,
\begin{align*}
\tr h &= -\scal +2a \Div \Vop T,\\
&=-(1+a)\scal +lots,\\
\Div h &= -\Div \ric + a \Div \mathcal L_{\Vop T} g,\\
&= -\frac{1}{2}\nabla \scal + a(\Delta \Vop T + \nabla _i \nabla_j \Vop T_j),
\end{align*}
hence
\begin{equation*}
\Div \Div h = -\frac{1}{2}\Delta \scal + 2a \Delta \Div \Vop T +lots = \left(-\frac{1}{2} -a\right)\Delta \scal + lots.
\end{equation*}

To compute the evolution of the torsion, we know from \eqref{eqn:torsion_variation}, again neglecting lower order terms, that
\begin{equation}\label{eqn:torsion_evol_hots}
\frac{\partial T_{ij}}{\partial t} = \nabla_a h_{bi} \varphi_{abj} + \nabla_i X_j + lots,
\end{equation}
for $X= (1+a)\Div T-a \nabla \tr T +lots$. 

Then,
\begin{equation}\label{eqn:torsion_srl_flow}
\begin{aligned}
\nabla_a h_{bi} \varphi_{abj} +\nabla_i X_j&= - \nabla_a R_{bi} \varphi_{abj} + a\nabla_a\mathcal L_{\Vop T} g_{bi} \varphi_{abj} +\nabla_i X_j+lots,\\
&=-\nabla_a R_{bi} \varphi_{abj} + \nabla_i ( a(\nabla \tr T - \Div T))_j +\nabla_i X_j + lots,\\
&=-\nabla_a R_{bi} \varphi_{abj} +\nabla_i \Div T_j,
\end{aligned}
\end{equation}
since, commuting derivatives, using the first contraction identity in \eqref{eqn:g2identities_1}, and Proposition \ref{prop:g2bianchi_identities} we obtain that
\begin{align*}
\nabla_a \mathcal L_{\Vop T} g_{bi} \varphi_{abj} &= \nabla_a \nabla_b T_{pq} \varphi_{pqi} \varphi_{abj}+\nabla_a\nabla_i T_{pq}\varphi_{pqb}\varphi_{jab} + lots,\\
&=\frac{1}{2}(R_{abmp} T_{mq} +R_{abmq} T_{pm}) \varphi_{pqi} \varphi_{abj}+\nabla_a \nabla_i T_{pq} (g_{pj} g_{qa} - g_{pa} g_{qj} - \psi_{pqja}) + lots,\\
&=\nabla_a \nabla_i T_{ja} - \nabla_a \nabla_i T_{aj} + lots,\\\
&=\nabla_i (\Div T^t_j  - \Div T_j) + lots,\\
&=\nabla_i (\nabla_j \tr T - \Div T_j) +lots.
\end{align*}
On the other hand, using the $G_2$-Bianchi identity \eqref{eqn:g2_bianchi}, commuting derivatives and then using  the second Bianchi identity, we obtain
\begin{equation}\label{eqn:torsion_laplacian}
\begin{aligned}
\Delta T_{ij} &= \nabla_a \nabla _a T_{ij} \\
&=\nabla_a \left( \nabla_i T_{aj} +T_{ap}T_{iq} \varphi_{pqj} +\frac{1}{2} R_{aipq}\varphi_{pqj} \right) + lots,\\
&=\nabla_i \nabla_a T_{aj} + \frac{1}{2}\nabla_a R_{aipq} \varphi_{pqj} + lots,\\
&=\nabla_i \Div T_j +\frac{1}{2}( -\nabla_p R_{aiqa} -\nabla_q R_{aiap}) \varphi_{pqj} + lots,\\
&=\nabla_i \Div T_j -\nabla_p R_{qi} \varphi_{pqj} + lots.
\end{aligned}
\end{equation}
Equation \eqref{eqn:torsion_srl} then follows from \eqref{eqn:torsion_evol_hots}, \eqref{eqn:torsion_srl_flow} and \eqref{eqn:torsion_laplacian}.
\end{proof}

\section{Functionals in $G_2$-geometry}

\subsection{Basic functionals in $G_2$-geometry}

\begin{proposition}
\label{variationofbasicfunctionals}
Let $(\varphi_t)_{t\in (-\epsilon,\epsilon)}$ be a smooth $1$-parameter family of $G_2$-structures on $M$ such that 
$$\left.\frac{d}{dt}\right|_{t=0}\varphi_t= h\diamond\varphi + X\lrcorner \psi.$$
Then
\begin{align*}
\left.\frac{d}{dt}\right|_{t=0} \int_M \scal d \mu &=  \int_M \langle h, \scal g-2\ric \rangle d \mu,\\
\left.\frac{d}{dt}\right|_{t=0} \int_M  (\tr T)^2 d\mu &= \int_M \langle h, (\tr T)^2 g - 2\tr T T_{\sym}\rangle d\mu + \int_M \langle X, -2\nabla \tr T - 2\tr T VT\rangle d\mu,\\
\left.\frac{d}{dt}\right|_{t=0} \int_M  |T|^2 d\mu &=\int_M \langle h, 2\ric +\mathcal L_{\Vop T} g +|T|^2 g - 2\tr T T_\sym + 2 (T\circ (\Vop T\lrcorner \varphi) )_{\sym}\rangle d\mu\\
&+\int_M \langle X, -2\Div T\rangle d\mu,\\
 \left.\frac{d}{dt}\right|_{t=0} \int_M  \langle T,T^t\rangle d\mu&=\int_M \langle h, \frac{1}{2} F +\langle T,T^t\rangle g + (T\oct T)_\sym - 2(T^2)_\sym+2 (\Pop (T) T)_\sym\rangle d\mu\\
    &+\int_M \langle X,-2\Div T^t -2 \Vop(T^2)\rangle d\mu,\\
   \left.\frac{d}{dt}\right|_{t=0} \int_M  \langle T,\Pop(T)\rangle d\mu&= \int_M \langle h, 2\ric - \frac{1}{2}F - \scal g+(\tr T)^2 g - \langle T,T^t\rangle g\\
    &-(T\oct T)_\sym - 2\tr T T_\sym+2(T^2)_\sym - 2(\Pop (T)T)_\sym\rangle d\mu\\
    &+\int_M \langle X, -2(\tr T) \Vop T + 2\Vop (T^2) + 2T(\Vop T)\rangle d\mu,\\
   \left.\frac{d}{dt}\right|_{t=0}\int_M |\Vop T|^2 d\mu &=\int_M \langle h, \scal\ g+\mathcal{L}_{\Vop T} g - (\tr T)^2g + |T|^2 g+2 (T\circ (\Vop T\lrcorner \varphi))_\sym\rangle d\mu \\
    &+ \int_M \langle X, -2\Div T +2\nabla\tr T + 2\tr T \Vop T  \rangle d\mu.
\end{align*}
Here $F_{pq}=R_{ijkl}\varphi_{ijp}\varphi_{klq}$ and $(A\oct A)_{pq}=A_{im} A_{jn} \varphi_{ijp}\varphi_{mnq}$. 
\end{proposition}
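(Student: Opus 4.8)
The plan is to treat the six functionals by a common scheme, singling out the total scalar curvature as the one purely Riemannian case. Since $\scal$ and $d\mu$ depend only on the induced metric, and by \eqref{eqn:metric_evol} the metric varies by $\dot g_{ij}=2h_{ij}$ with no dependence on $X$, the first variation of $\int_M\scal\,d\mu$ follows at once from the pointwise formula \eqref{eqn:var_scal} together with the volume variation $\frac{\partial}{\partial t}d\mu=\tr h\,d\mu$: applied to $\dot g=2h$, the terms $-\Delta\tr h+\Div\Div h$ integrate to zero, leaving $\int_M\langle h,\scal\,g-2\ric\rangle\,d\mu$. No $G_2$-specific input is needed here.

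For the five torsion functionals, each has the shape $\int_M Q\,d\mu$ with $Q$ a scalar contraction of $T$ (and, for $\langle T,\Pop(T)\rangle$, of $\psi$). Differentiating, the volume variation contributes $\langle h,Q\,g\rangle$, and the remaining contributions come from differentiating $Q$ itself, using $\dot g^{ij}=-2h^{ij}$ for each index contraction and the torsion variation \eqref{eqn:torsion_variation}
$$\dot T_{ij}=T_{ia}h_{aj}+T_{ia}X_l\varphi_{laj}+\nabla_a h_{bi}\varphi_{abj}+\nabla_i X_j.$$
The algebraic (undifferentiated) pieces of $\dot T$, together with the metric-variation pieces, produce the quadratic-torsion terms $\tr T\,T_\sym$, $(T^2)_\sym$, $(T\oct T)_\sym$ and $(T\circ(\Vop T\lrcorner\varphi))_\sym$, once products of two copies of $\varphi$ are reduced via \eqref{eqn:g2identities_1} and the outcome is symmetrized against the symmetric tensor $h$. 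Scalars in the $g$-proportional part are freely re-expressed through \eqref{eqn:scalar_torsion} to match the stated normal forms.

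The two derivative pieces $\nabla_a h_{bi}\varphi_{abj}$ and $\nabla_i X_j$ are handled by integration by parts. The $\nabla_i X_j$ term transfers the derivative onto $T$ and yields directly the $\Div T$-, $\nabla\tr T$- and $\tr T\,\Vop T$-type entries in the $X$-slot. In the $\nabla_a h_{bi}\varphi_{abj}$ term the derivative, after integration by parts, falls on $T$ and on $\varphi$; the latter is lower order and contributes further quadratic-torsion terms via the formula for $\nabla\psi$, while $\nabla_a T_{\cdot\cdot}$ contracted against $\varphi$ is rewritten through the $G_2$-Bianchi identity \eqref{eqn:g2_bianchi}. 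This is exactly the mechanism producing the curvature quantities $\ric$, $F$ and $\scal\,g$ in the $h$-slot of the formulas for $\int_M|T|^2$, $\int_M\langle T,T^t\rangle$ and $\int_M\langle T,\Pop(T)\rangle$. Throughout, Proposition \ref{prop:g2bianchi_identities} and the contraction identities \eqref{eqn:g2identities_2} and \eqref{eqn:g2identities_3} are used to collapse the resulting expressions into the stated combinations.

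The main obstacle is not conceptual but the sheer volume of tensor bookkeeping: each product of two structure forms must be reduced by the correct contraction identity, and the Bianchi substitution must be carried out carefully, tracking symmetrizations and the transpose (these are most delicate for $\langle T,T^t\rangle$ and $\langle T,\Pop(T)\rangle$, where both $\Pop$ and $F$ enter). A useful organizing check, which also shortens the work, comes from \eqref{eqn:scalar_torsion}: since $\int_M\Div\Vop T\,d\mu=0$ for every $G_2$-structure, varying the identity $\scal=-2\Div\Vop T+(\tr T)^2+|\Vop T|^2-|T|^2$ gives
$$\delta\!\int_M\scal\,d\mu=\delta\!\int_M(\tr T)^2\,d\mu+\delta\!\int_M|\Vop T|^2\,d\mu-\delta\!\int_M|T|^2\,d\mu,$$
which must hold identically in $h$ and $X$. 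One verifies that the $X$-slots cancel and the $h$-slots collapse to $\scal\,g-2\ric$, so this relation serves either to cross-check the three nontrivial formulas or to derive one of them from the other two.
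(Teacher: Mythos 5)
Your strategy is correct and is, at the level of the actual mathematics, the same one underlying the paper's proof: the paper simply cites Corollary 5.35 of \cite{flows2} (together with \eqref{eqn:id2} and Proposition \ref{prop:g2bianchi_identities} to compactify the answers), and that corollary is obtained by exactly the mechanism you describe --- vary $T$ via \eqref{eqn:torsion_variation}, integrate the $\nabla h$ and $\nabla X$ terms by parts, and reduce with the $G_2$-Bianchi identity and the contraction identities; the paper carries this out explicitly for $|T|^2$ and $(\tr T)^2$ in Proposition \ref{prop:variation_lagrangian}, confirming your scheme. Your cross-check via \eqref{eqn:scalar_torsion} does hold (the $X$-slots of the stated formulas cancel and the $h$-slots sum to $\scal\, g - 2\ric$) and is a genuinely useful addition not present in the paper; note also that by \eqref{eqn:id2} the $|\Vop T|^2$ formula can be read off from the other three without a separate computation. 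One small slip: in the integration by parts of the $\nabla_a h_{bi}\varphi_{abj}$ term the derivative landing on the structure form is controlled by the formula for $\nabla\varphi$ (producing $T$ contracted with $\psi$), not by the formula for $\nabla\psi$, which only enters for the $\langle T,\Pop(T)\rangle$ functional.
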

\begin{proof}
 The first formula follows from the standard first variation formula of the total scalar curvature functional, see \cite{besse}. The remaining variation formulae follow trivially from Corollary 5.35 in \cite{flows2} and \eqref{eqn:id2}. However, here we have used  Proposition \ref{prop:g2bianchi_identities} and \eqref{eqn:id1} in order to present the result in a more compact form.
\end{proof}

\begin{remark}
Given a $G_2$-structure on a $7$-manifold $M$, there are two natural ways to equip the tangent space $T_\varphi \Omega^3_+=\Omega^3$ with an inner-product, depending on whether we view a $3$-form as a skew-symmetric tensor, or as isomorphic to  a section of $S^2(T^*M)\oplus T^*M$, see Remark \ref{rmk:L2_ips}. In particular, given $\alpha,\beta \in \Omega^3$ with
\begin{align*}
\alpha &=h\diamond\varphi +X\lrcorner \psi,\\
\beta &= w\diamond \varphi+Y\lrcorner \psi,
\end{align*}
we can define
\begin{equation}\label{eqn:L2_inner_products}
\begin{aligned}
(\alpha,\beta)_{L^2}& = \int_M \langle \alpha,\beta \rangle d\mu,\\
\langle \alpha,\beta\rangle_{L^2} &= \int_M \langle h,w\rangle +\langle X,Y\rangle d\mu.
\end{aligned}
\end{equation}
The particular choice among \eqref{eqn:L2_inner_products} affects the calculation of the gradient of each of the functionals of Proposition \ref{variationofbasicfunctionals}. For instance, we have that
\begin{equation*}
\langle (\scal g -2\ric) \diamond \varphi, h\diamond \varphi \rangle_{L^2} = \int_M \langle \scal g -2\ric, h\rangle d\mu,
\end{equation*}
so its $\langle\cdot,\cdot\rangle_{L^2}$ gradient is the $3$-form $(\scal g -2\ric)\diamond \varphi$. On the other hand, denoting by $\ric_0, h_0$ the trace-free parts of $\ric$ and $h$ respectively, we compute
\begin{align*}
((\scal g -2\ric) \diamond \varphi, h\diamond \varphi )_{L^2}&=\int_M 270 \scal \; \tr h +12 \langle \ric_0, h_0\rangle d\mu,\\
&=\int_M \langle 270\scal g , \frac{\tr h}{7} g\rangle +\langle 12 \ric_0,h_0\rangle d\mu,\\
&=\int_M \langle 270 \scal g +12 \ric_0, h\rangle d\mu,
\end{align*}
by Remark \ref{rmk:L2_ips}. Therefore, $(\scal g - 2\ric) \diamond \varphi$ is not the $(\cdot,\cdot)_{L^2}$ gradient of the total scalar curvature functional. Because of this, it is more natural to use $\langle\cdot,\cdot\rangle_{L^2}$ in what follows.
\end{remark}

\subsection{Diffeomorphism invariance} \label{subsection:diffeo_invariance}
All the functionals studied in the previous section are diffeomorphism invariant, namely the are smooth maps $\mathcal F: \Omega^3_+ \rightarrow \mathbb R$ with the property that
\begin{equation*}
\mathcal F(\Phi^*\varphi) = \mathcal F(\varphi),
\end{equation*}
for any diffeomorphism $\Phi$ of $M$.

Suppose now that we have such a diffeomorphism invariant functional on the space of $G_2$-structures, and that there are operators 
\begin{align*}
&Q_1:\Omega^3_+\rightarrow\mathcal S^2\\
&Q_2: \Omega^3_+ \rightarrow \Omega^1
\end{align*}
such that if $\omega= h\diamond\varphi+X\lrcorner \psi$, according to the decomposition \eqref{eqn:hX_decomposition}, then the first variation of $\mathcal F$ at $\varphi$ is given by
\begin{equation}
\begin{aligned}
D_\varphi \mathcal F(\omega) &= \langle Q_1(\varphi) \diamond \varphi + Q_2(\varphi)\lrcorner \psi, h\diamond \varphi +X\lrcorner \psi\rangle_{L^2},\\
&=\int_M \langle Q_1,h\rangle +\langle Q_2, X\rangle d\mu,
\end{aligned}
\end{equation}
where we write $Q_i=Q_i(\varphi)$, to simplify the notation.

By the diffeomorphism invariance of $\mathcal F$, however, we know that $D\mathcal F_\varphi(\mathcal L_Y \varphi)=0$ for any vector field $Y$ on $M$. Therefore, by \eqref{eqn:Lie_phi}, we obtain the identity
\begin{equation*}
\int_M \langle Q_1,\frac{1}{2}\mathcal L_Y g\rangle +\langle Q_2,-\frac{1}{2} \curl Y +Y\lrcorner T\rangle d\mu =0,
\end{equation*}
which, by integration by parts, gives \begin{equation*}
\int_M \langle \Div Q_1 +\frac{1}{2}\curl Q_2-\frac{1}{2} Q_2 \lrcorner \Pop(T) - T(Q_2), Y\rangle d\mu=0,
\end{equation*}
for every $Y$. 

It follows that $(Q_1,Q_2)\in \mathcal S^2\times \Omega^1$ is in the kernel of the operator 
\begin{align*}
&\Lop:\mathcal S^2\times \Omega^1\rightarrow \Omega^1\\
\Lop(h,X)&= \Div h +\frac{1}{2}\curl X -\frac{1}{2} X\lrcorner \Pop (T) - T(X).
\end{align*}
Moreover, the $L^2$-adjoint $\Lop^*:\Omega^1\rightarrow \mathcal S^2\times \Omega^1$ of $\Lop$ is given by
\begin{equation*}
\Lop^*(Y) = \left( -\frac{1}{2}\mathcal L_Y g, \frac{1}{2}\curl Y - Y\lrcorner T\right),
\end{equation*}
namely, the expression of $-\mathcal L_Y \varphi$ with respect to the decomposition \eqref{eqn:hX_decomposition}.

The orthogonal subspaces $\mathrm{Im} \Lop^*$ and $\ker \Lop$ correspond to directions tangent and orthogonal, respectively, to the action on the space of $G_2$-structures of the diffeomorphism group of $M$. In fact, the following decomposition holds.

\begin{proposition} \label{Firstsplitting}
Given a $G_2$-structure $\varphi$ on a $7$-manifold $M$, the tangent space $T_\varphi \Omega^3_+$ of the space of $G_2$-structures $\Omega^3_+$, equipped with the inner product $\langle\cdot,\cdot\rangle_{L^2}$, is isometric to $\mathrm{Im} \Lop^* \oplus \ker \Lop$, where the direct sum is $\langle\cdot,\cdot\rangle_{L^2}$-orthogonal.
 \end{proposition}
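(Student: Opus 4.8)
The plan is to reduce the statement to a purely analytic $L^2$-orthogonal splitting and then invoke elliptic theory for the operator $\Lop\Lop^*$. First, recall that the inner product $\langle\cdot,\cdot\rangle_{L^2}$ on $\Omega^3$ is defined precisely so that the isomorphism \eqref{eqn:isomorphism}, $(h,X)\mapsto h\diamond\varphi+X\lrcorner\psi$, is an isometry of $T_\varphi\Omega^3_+$ onto $\mathcal S^2\times\Omega^1$ equipped with the product $L^2$ inner product $\int_M\langle h,w\rangle+\langle X,Y\rangle\,d\mu$. Under this isometry $\mathrm{Im}\,\Lop^*$ and $\ker\Lop$ correspond to the analogous subspaces of $\mathcal S^2\times\Omega^1$, so it suffices to prove the orthogonal decomposition $\mathcal S^2\times\Omega^1=\mathrm{Im}\,\Lop^*\oplus\ker\Lop$. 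The orthogonality is immediate from the adjointness relation: if $v\in\ker\Lop$ and $Y\in\Omega^1$, then $\langle\Lop^*Y,v\rangle_{L^2}=\langle Y,\Lop v\rangle_{L^2}=0$. What remains, and is the genuine content, is surjectivity, i.e.\ that these two subspaces span the whole space; this requires knowing that $\mathrm{Im}\,\Lop^*$ is closed.

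The key observation driving this is that $\Lop^*$ is overdetermined elliptic. I would compute its principal symbol directly from $\Lop^*(Y)=\bigl(-\tfrac12\mathcal L_Y g,\ \tfrac12\curl Y-Y\lrcorner T\bigr)$. The zeroth-order term $Y\lrcorner T$ is irrelevant, and the $\mathcal S^2$-component already suffices: the symbol of $Y\mapsto-\tfrac12\mathcal L_Y g$ at $\xi\neq0$ sends $Y$ to $-\tfrac{i}{2}(\xi_iY_j+\xi_jY_i)$, which is the familiar injective symbol of the Killing operator. Indeed, if $\xi_iY_j+\xi_jY_i=0$, contracting with $\xi^i$ and then with $Y^j$ forces $|\xi|^2|Y|^2+(\xi\cdot Y)^2=0$, hence $Y=0$. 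Therefore $\sigma_\xi(\Lop^*)$ is injective for all $\xi\neq0$, so $\Lop$ is underdetermined elliptic and the composition $\Lop\Lop^*$ is a formally self-adjoint, second-order \emph{elliptic} operator on $\Omega^1$, since $\sigma_\xi(\Lop\Lop^*)=\sigma_\xi(\Lop^*)^*\sigma_\xi(\Lop^*)$ is an isomorphism of $\Omega^1_x$.

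With ellipticity in hand, the remaining steps are standard Fredholm theory. The operator $\Lop\Lop^*$ has closed range, finite-dimensional kernel, and $\ker(\Lop\Lop^*)=\ker\Lop^*$ (because $\langle\Lop\Lop^*Y,Y\rangle_{L^2}=\|\Lop^*Y\|_{L^2}^2$), yielding the $L^2$-orthogonal splitting $\Omega^1=\ker\Lop^*\oplus\mathrm{Im}(\Lop\Lop^*)$. To decompose an arbitrary $(h,X)\in\mathcal S^2\times\Omega^1$, I note that $\Lop(h,X)\perp\ker\Lop^*$, since for any $Z\in\ker\Lop^*$ we have $\langle\Lop(h,X),Z\rangle_{L^2}=\langle(h,X),\Lop^*Z\rangle_{L^2}=0$; hence $\Lop(h,X)\in\mathrm{Im}(\Lop\Lop^*)$ and there exists $Y$ with $\Lop\Lop^*Y=\Lop(h,X)$. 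Setting $v=(h,X)-\Lop^*Y$ gives $\Lop v=\Lop(h,X)-\Lop\Lop^*Y=0$, so $v\in\ker\Lop$ and $(h,X)=\Lop^*Y+v$ is the desired decomposition. I expect the main obstacle to be the clean verification of symbol injectivity and the attendant claim that $\mathrm{Im}\,\Lop^*$ is closed; both are handled at once by recognizing the $\mathcal S^2$-slot as the Killing operator $\delta^*$ up to a constant, so that the whole construction is the $G_2$-analogue of the Berger--Ebin/Besse decomposition $\mathcal S^2=\mathrm{Im}\,\delta^*\oplus\ker\delta$ of symmetric $2$-tensors in Riemannian geometry.
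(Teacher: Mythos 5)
Your proof is correct and follows essentially the same route as the paper: establish that $\Lop\circ\Lop^*$ is a self-adjoint elliptic operator, identify $\ker\Lop\Lop^*=\ker\Lop^*$ by integration by parts, and use the Fredholm alternative to solve $\Lop\Lop^*Y=\Lop(h,X)$ and split $(h,X)=\bigl((h,X)-\Lop^*Y\bigr)+\Lop^*Y$. The only (cosmetic) difference is that you deduce ellipticity from injectivity of $\sigma_\xi(\Lop^*)$ via the Killing-operator symbol in the $\mathcal S^2$-slot, whereas the paper computes $\sigma_\xi(\Lop\Lop^*)(V)=-\tfrac34|\xi|^2V-\tfrac14\xi\langle V,\xi\rangle$ directly and checks it is an isomorphism; both verifications are valid.
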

The proof of the proposition follows in the same line of reasoning as the infinitesimal slice theorem in Riemannian geometry, see  Chapter 4 in \cite{besse}.
\begin{proof}

The operator $\Lop\circ \Lop^*$ is clearly self-adjoint, and in fact elliptic. To see this, neglecting lower order terms in $V$, we compute:
\begin{align*}
\Lop\circ \Lop^* (V)_k &=-\frac{1}{2} \nabla_i  (\nabla_i V_k +\nabla_k V_i) +\frac{1}{4} \nabla_p (\curl V)_q \varphi_{pqk} + lots\\
&=-\frac{1}{2} \Delta V_k -\frac{1}{2} \nabla_k \nabla_i V_i +\frac{1}{4} \nabla_p \nabla_a V_b \varphi_{abq}\varphi_{kpq} + lots\\
&=-\frac{1}{2} \Delta V_k -\frac{1}{2} \nabla_k \nabla_i V_i +\frac{1}{4} \nabla_p \nabla_a V_b (g_{ak} g_{bp} - g_{ap} g_{bk}-\psi_{abkp}) + lots\\
&=-\frac{3}{4} \Delta V_k -\frac{1}{4} \nabla_k \nabla_i V_i   + lots.
\end{align*}
Therefore, the principal symbol of $\Lop\circ \Lop^*$ is
\begin{equation*}
\sigma_\xi (\Lop\circ \Lop^*) (V) = -\frac{3}{4} |\xi|^2 V- \frac{1}{4} \xi_k \langle V,\xi \rangle.
\end{equation*}
Let $V\in \ker \sigma_\xi (\Lop\circ \Lop^*)$. We then have that
\begin{equation*}
-\frac{3}{4} |\xi|^2 \langle V,\xi\rangle - \frac{1}{4}|\xi|^2  \langle V,\xi\rangle = -|\xi|^2 \langle V,\xi\rangle =0.
\end{equation*}
This implies that $\langle V,\xi\rangle =0$, since $\xi\not =0$, hence 
$$\sigma_\xi (\Lop\circ \Lop^*) (V) = -\frac{3}{4} |\xi|^2 V =0$$
which implies that $V=0$.

Therefore, $\Lop\circ \Lop^*$ is an elliptic operator, as $\sigma_\xi (\Lop\circ \Lop^*)$ is an isomorphism.

Now, observe that $\ker \Lop \circ \Lop^* = \ker \Lop^* $. To see this, first note that the inclusion $\ker \Lop^* \subset \ker \Lop\circ \Lop^* $ is trivial. For the reverse inclusion, let $V \in \ker \Lop \circ \Lop^* $. Then 
    \begin{align*}
        0 = \int_M \langle \Lop \circ \Lop^* (V), V \rangle d \mu   = \int_M |\Lop^*(V)|^2 d \mu 
    \end{align*}
    Thus $V \in \ker \Lop^*$. 

To finish the proof of the proposition, let $(h,X) \in \mathcal S^2(T^*M) \oplus \Omega^1(M)$. It suffices to prove that the equation $L \circ L^* (V) =L(h,X)$ has a unique solution $V \in \Omega^1(M)$, since then 
   $$(h,X)=\underbrace{(h,X) - \Lop^*(V)}_{\ker \Lop} +\underbrace{\Lop^*(V)}_{\mathrm{Im} \Lop^*},$$
 which proves that $\mathcal S^2(T^*M)\times \Omega^1(M) \subset \ker \Lop \oplus \mathrm{Im} \Lop^*$.
   
 By Fredholm theory, this equation has a solution if and only if $\Lop(h,X)$ is orthogonal to $\ker \Lop \circ \Lop^* = \ker  \Lop^*$. This is indeed the case, since if $V \in \ker \Lop^*$, we have that
    \begin{equation*}
        \int_M \langle \Lop(h,X), V \rangle d \mu =  \int_M \langle (h,X) , \Lop^*(V) \rangle d \mu =0.
    \end{equation*}
\end{proof}

\subsection{Special Ricci-like operators and diffeomorphism invariance} In this subsection we will use the machinery developed in Subsection \ref{subsection:diffeo_invariance} to investigate which special Ricci-like operators are allowed in the highest order terms of a diffeomorphism invariant functional on $G_2$-structures.

\begin{proposition}\label{prop:uniqueness}
Let $M$ be a $7$-manifold with a $G_2$-structure $\varphi$ and let $\psi=*\varphi$. Suppose that $\mathcal F:\Omega^3_{+} \rightarrow \mathbb R$ is a diffeomorphism invariant functional and  that there are second order quasilinear differential operators $Q_1: \Omega^3_{+}\rightarrow \mathcal S^2$ and $Q_2: \Omega^3_{+} \rightarrow \Omega^1$ such that for any variation $\{\varphi_t\}_{t\in (-\varepsilon,\varepsilon)}$ of $G_2$-structures with $\varphi_0=\varphi$ and $\left.\frac{d\varphi_t}{dt}\right|_{t=0}= h\diamond \varphi + X\lrcorner \psi$
\begin{equation*}
\left.\frac{d}{dt}\right|_{t=0} \mathcal F(\varphi_t) =\int_M \langle h,Q_1(\varphi)\rangle +\langle X,Q_2(\varphi)\rangle d\mu_{g_{\varphi}}.
\end{equation*}
\begin{enumerate}
\item $Q_1$ and $Q_2$ are of the form
\begin{align*}
    Q_1&=\alpha \ric +\beta \scal g +\gamma \mathcal L_{\Vop T} g+ \zeta F+lots\\
    Q_2&= \delta \Div T + \epsilon \nabla \tr T +lots
\end{align*}
and
\begin{equation}\label{eqn:hots_fcn}
\frac{\alpha}{2} +\beta -\frac{\gamma}{2}+\frac{\delta}{4} =0 \quad \textrm{and}\quad \gamma +\frac{\delta}{2}=0.
\end{equation}
This system describes, modulo scaling, a $3$-parameter family of second order quasilinear differential operators, as the only possible second order terms of $Q_1$ and $Q_2$.

\item If $Q_1$ and $Q_2$ have the special form
\begin{align*}
    Q_1&=-\ric +a \mathcal L_{\Vop T} g+ \beta \scal g +lots\\
    Q_2&= (1+a) \Div T - a \nabla \tr T +lots
\end{align*}
then $a=-\frac{1}{3}$ and $\beta=\frac{1}{6}$. In particular, it is impossible for a special Ricci-like operator (i.e. $\beta=0$ above) to be the gradient, up to lower order terms, of the functional $\mathcal F$.
 \end{enumerate}
\end{proposition}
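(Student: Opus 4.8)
The plan is to combine two ingredients: the classification of second-order quasilinear operators recorded in Subsection \ref{sec:diff_ops}, which pins down the admissible form of $Q_1,Q_2$, and the diffeomorphism invariance of $\mathcal F$, which by the computation preceding Proposition \ref{Firstsplitting} forces $(Q_1,Q_2)\in\ker\Lop$. For part (1), since $Q_1$ is a second-order quasilinear operator valued in $\mathcal S^2$ and $Q_2$ one valued in $\Omega^1$, the classification says their top-order parts are linear combinations of $\ric,\scal g,\mathcal L_{\Vop T}g,F$ and of $\Div T,\nabla\tr T$ respectively (note $\Div T^t$ is redundant by Proposition \ref{prop:g2bianchi_identities}). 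Hence $Q_1=\alpha\ric+\beta\scal g+\gamma\mathcal L_{\Vop T}g+\zeta F+\textrm{lots}$ and $Q_2=\delta\Div T+\epsilon\nabla\tr T+\textrm{lots}$, with the remainders quadratic in $T$.

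The substance is to extract \eqref{eqn:hots_fcn} from the identity $\Lop(Q_1,Q_2)=0$ by passing to its principal, i.e. third-order, symbol. In $\Lop(h,X)=\Div h+\tfrac12\curl X-\tfrac12 X\lrcorner\Pop(T)-T(X)$ the last two terms carry no extra derivative and are thus lower order, so only $\Div Q_1+\tfrac12\curl Q_2$ survives at top order. I would linearize in a variation $h\diamond\varphi+X\lrcorner\psi$, using the principal part $\sigma_\xi(\delta T)_{ij}=\xi_a h_{bi}\varphi_{abj}+\xi_i X_j$ of \eqref{eqn:torsion_variation}, and compute the symbols term by term. The decisive simplifications I expect are: $\curl(\nabla\tr T)$ has vanishing principal symbol, since the Hessian is symmetric while $\varphi$ is antisymmetric, so $\epsilon$ drops out; $\Div F$ has vanishing principal symbol, because its genuine third-order term $\nabla_j R_{abcd}\varphi_{abj}\varphi_{cdk}$ carries the factor $\xi_a\xi_j\varphi_{abj}=0$, so $\zeta$ drops out; and $h_{bi}\psi_{abik}=0$ by symmetry, which simplifies $\delta\Vop T$. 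Using $\Div\ric=\tfrac12\nabla\scal$ together with $\scal=-2\Div\Vop T+\textrm{lots}$, all remaining contributions reduce to four covector structures $\xi_k\,\xi_a\xi_b h_{ab}$, $|\xi|^2\xi_k\tr h$, $|\xi|^2\xi_a h_{ak}$ and $|\xi|^2\Vop(\xi\otimes X)_k$.

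Requiring the full symbol to vanish means each of these four coefficients is zero. The coefficients of the last two structures both reduce to the single relation $\gamma+\tfrac\delta2=0$, and modulo this relation the coefficients of the first two structures coincide, leaving exactly one further independent equation, namely $\tfrac\alpha2+\beta-\tfrac\gamma2+\tfrac\delta4=0$. Thus the four raw conditions have rank two and are precisely \eqref{eqn:hots_fcn}; counting the unknowns $\alpha,\beta,\gamma,\zeta,\delta,\epsilon$ modulo scaling then yields the stated $3$-parameter family. Part (2) follows by substitution: with $\alpha=-1$, $\gamma=a$, $\delta=1+a$, $\zeta=0$, the relation $\gamma+\tfrac\delta2=0$ gives $3a+1=0$, hence $a=-\tfrac13$, and feeding this into $\tfrac\alpha2+\beta-\tfrac\gamma2+\tfrac\delta4=0$ gives $\beta=\tfrac16$. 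In particular $\beta=0$ cannot occur, so no special Ricci-like operator is the gradient of such an $\mathcal F$.

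The main obstacle is the symbol bookkeeping of the middle step — verifying that $\Div F$ and $\curl(\nabla\tr T)$ genuinely contribute nothing at top order, that the relevant $\psi$- and $\varphi$-contractions vanish, and that the four raw vanishing conditions collapse to rank two — rather than any conceptual difficulty. Once the four structures and their coefficients are in hand, both \eqref{eqn:hots_fcn} and the conclusions of part (2) are read off directly.
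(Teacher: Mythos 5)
Your proposal is correct and follows essentially the same route as the paper: both arguments rest on the classification of second-order quasilinear operators to fix the form of $Q_1,Q_2$, then use diffeomorphism invariance to force $\Lop(Q_1,Q_2)=0$ and hence the vanishing of the genuinely third-order part of $\Div Q_1+\tfrac12\curl Q_2$, from which \eqref{eqn:hots_fcn} and part (2) follow by the same substitution. The only difference is bookkeeping: the paper manipulates the operators directly via the second Bianchi and $G_2$-Bianchi identities, reducing everything to the two independent third-order operators $\nabla\scal$ and $\Delta T_{ab}\varphi_{abk}$, whereas you track the principal symbol of the linearization and observe the four covector structures collapse to rank two — an equivalent computation yielding the same two relations.
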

\begin{proof}
By the diffeomorphism invariance of the functional and the classification in \cite{flows2} of second order quasilinear operators, see Subsection \ref{sec:diff_ops}, we know that
\begin{align*}
Q_1&= \alpha \ric +\beta \scal g +\gamma \mathcal L_{\Vop T} g +\zeta F +lots\\
Q_2&= \delta \Div T+\epsilon \nabla \tr T + lots.
\end{align*} 
Moreover, by Subsection \ref{subsection:diffeo_invariance} we know that $Q_1$ and $Q_2$ should satisfy $\Lop(Q_1,Q_2)=0$, namely
\begin{equation*}
\Div Q_1 +\frac{1}{2} \curl Q_2 - \frac{1}{2} Q_2 \lrcorner \Pop(T) -T(Q_2)=0
\end{equation*}
In particular, the term $\Div Q_2+\frac{1}{2}\curl Q_2$ which a-priori is of order $3$ with respect to $\varphi$ has to reduce to an expression of lower order. To exploit this, it suffices to compute $\Div Q_1$ and $\curl Q_2$ omitting terms of order less than $3$. 

We already know that $\Div (\scal g)= \nabla \scal$ and $\Div \ric = \frac{1}{2} \nabla \scal$, by the twice contracted second Bianchi identity. Expanding the terms in $\Div \mathcal L_{\Vop T} g$ we obtain
\begin{align*}
\nabla_i (\mathcal{L}_{\Vop T}g_{ij} ) = & \nabla_i (\nabla_i (T_{ab} \varphi_{abj}) + \nabla_j (T_{ab}\varphi_{abi})  )\\
=& \nabla_i (\nabla_i T_{ab} \varphi_{abj} +T_{ab}T_{im}\psi_{mabj} +\nabla_j T_{ab}\varphi_{abi}+T_{ab}T_{jm}\psi_{mabi})\\
=&\Delta T_{ab} \varphi_{abj}+ \nabla_i\nabla_j T_{ab} \varphi_{abi} +T_{im} T_{ab}\psi_{abmj}+T_{jm}T_{ab}\psi_{abmi}\\
=&\Delta T_{ab} \varphi_{abj}+ \nabla_i\nabla_j T_{ab} \varphi_{abi} +T_{im}\Pop(T)_{mj}+T_{jm}\Pop(T)_{mi}.
\end{align*}
Commuting derivatives and applying the $G_2$-Bianchi identity \eqref{eqn:g2_bianchi} we obtain, omitting lower order terms,
\begin{align*}
\nabla_i (\mathcal{L}_{\Vop T}g_{ij} )= & \Delta T_{ab} \varphi_{abj}+ \nabla_j\nabla_i T_{ab}\varphi_{abi}+lots\\
=&\Delta T_{ab} \varphi_{abj} +\frac{1}{2}\nabla_j \left(( T_{ip}T_{aq}+\frac{1}{2} R_{iapq})\varphi_{pqb}\right) \varphi_{abi}+lots\\
=&\Delta T_{ab} \varphi_{abj} +\frac{1}{2}\nabla_j \left( T_{ip}T_{aq}+\frac{1}{2} R_{iapq}\right)\varphi_{pqb} \varphi_{iab}+lots\\
=&\Delta T_{ab}\varphi_{abj} +\frac{1}{4} \nabla_j R_{iapq} (g_{pi} g_{qa} - g_{pa} g_{qi} -\psi_{pqia})+lots\\
=&\Delta T_{ab} \varphi_{abj} -\frac{1}{4}\nabla_j \scal - \frac{1}{4}\nabla_j\scal -\nabla_j R_{iapq}\psi_{pqia}+lots\\
=&\Delta T_{ab}\varphi_{abj} -\frac{1}{2}\nabla_j \scal +lots.
\end{align*}
Moreover, 
\begin{equation*}
    \nabla_i F_{ik} =\nabla_i (R_{abpq}\varphi_{abi}\varphi_{pqk})= \nabla_i R_{abpq} \varphi_{abi}\varphi_{pqk}+lots=lots,
\end{equation*}
since $\nabla_i R_{abpq} \varphi_{abi}=0$ by the second Bianchi identity.

Therefore,
\begin{equation*}
\Div Q_1 = \Div (\alpha \ric +\beta\scal g+\gamma\mathcal L_{\Vop T} g +\zeta F) = \left(\frac{\alpha}{2} +\beta -\frac{\gamma}{2}\right) \nabla \scal + \gamma \Delta T_{ab} \phi_{abj}+lots.
\end{equation*}
Similarly we compute, omitting the lower order terms,
\begin{align*}
\curl(\Div T)_k&=\nabla_i \nabla_b T_{bj} \varphi_{ijk}=\nabla_b \nabla_i T_{bj} \varphi_{ijk} +lots\\
&=\nabla_b \nabla_b T_{ij} \varphi_{ijk} +\frac{1}{2}\nabla_b R_{ibpq}\varphi_{pqj} \varphi_{kij} +lots\\
&=\Delta T_{ij}\varphi_{ijk} +\frac{1}{2} \nabla_b R_{ibpq}(g_{pk}g_{qi}-g_{pi}g_{qk} -\psi_{pqki}) +lots\\
&=\Delta _{ij}\varphi_{ijk} +\nabla_b R_{bk} +lots\\
&=\Delta T_{ij} \varphi_{ijk} +\frac{1}{2}\nabla _k\scal+lots.
\end{align*}
Therefore, since $\curl(\nabla \tr T)=0$, we obtain $\curl(Q_2)_k = \delta \Delta T_{ij}\phi_{ijk}$, so finally

\begin{equation*}
\Div(Q_1)_k+\frac{1}{2}\curl( Q_2 )_k=\left(\frac{\alpha}{2} +\beta -\frac{\gamma}{2}+\frac{\delta}{4}\right) \nabla_k \scal +\left( \gamma +\frac{\delta}{2}\right)\Delta T_{ij} \phi_{ijk} +lots.
\end{equation*}
We conclude that $\alpha,\beta,\gamma,\delta,\epsilon, \zeta$ should satisfy
\begin{align*}
\frac{\alpha}{2} +\beta -\frac{\gamma}{2}+\frac{\delta}{4} &=0,\\
\gamma +\frac{\delta}{2}&=0.
\end{align*}
The system has a $4$-dimensional set of solutions, from which the first claim of the proposition follows. Putting $\alpha=-1, \gamma=a, \delta= 1+a, \epsilon=-a$ and $\zeta=0$ gives that $a=-\frac{1}{3}$ and $\beta=\frac{1}{6}$, which proves the second claim of the proposition.
\end{proof}

\begin{remark}\label{rmk:functionals_EL}
It is easy to verify that all the functionals in Proposition \ref{variationofbasicfunctionals} satisfy the system \eqref{eqn:hots_fcn}. Moreover, any solution of the system is of the form 
\begin{equation}\label{eqn:spanning}
(\alpha,\beta,\gamma,\delta,\epsilon,\zeta)=\beta(-2,1,0,0,0)+\delta (-1,0,-\frac{1}{2},0,0) +\epsilon(0,0,0,1,0) +\zeta (0,0,0,0,1).
\end{equation}

Observe that by Proposition \ref{variationofbasicfunctionals}, each of the vectors in the right-hand side of \eqref{eqn:spanning} corresponds, in the same order, to the functionals 
\begin{equation*}
\int_M \scal d\mu,\quad -\frac{1}{2}\int_M |T|^2 d\mu, \quad -\frac{1}{2}\int_M (\tr T)^2 d\mu, \quad \frac{1}{2}\int_M (\langle T,T^t\rangle - (\tr T)^2 )d\mu.
\end{equation*}
\end{remark}

\section{The $G_2$-Hilbert functional} \label{sec:g2_hilbert}

In this section we seek to define a diffeomorphism invariant functional
 $$\mathcal F(\varphi) =\int_M \Fop(\varphi) d\mu_{g_\varphi}$$
  on the space of $G_2$-structures, which will be motivated by the Einstein-Hilbert functional in Riemannian Geometry. Namely, in analogy with the Einstein-Hilbert functional, we require that: 
  \begin{enumerate}
  \item $\Fop (\varphi)$ is a second order scalar quasilinear operator on $G_2$-structures, which is linear in scalar curvature (or equivalently, by Proposition \ref{prop:g2bianchi_identities}, linear in $\Div \Vop T$) and quadratic on the torsion of $\varphi$.
  \item The first variation of $\Fop$ is of the form
$$D_\varphi \Fop (h\diamond\varphi + X\lrcorner \psi) = \langle Q_1,h\rangle +\langle Q_2,X\rangle + \textrm{divergence terms},$$
where $Q_1,Q_2$ are second order quasilinear differential operators such that the linearization of $P=Q_1 \diamond\varphi + Q_2\lrcorner \psi$, at least restricted to the right gauge, has principal symbol
\begin{equation*}
\sigma_\xi (D_\varphi P)(h\diamond\varphi +X\lrcorner\psi) = |\xi|^2 (h\diamond\varphi +X\lrcorner \psi).
\end{equation*}
\end{enumerate}
By Proposition \ref{prop:special_rl_symbol}, good candidates for such operators are the special Ricci-like operators. Thus, we will require that, for some $a\in \mathbb R$, 
\begin{align*}
Q_1 &= -\ric +a \mathcal L_{\Vop T} g + lots,\\
Q_2 & =(1+a) \Div T - a\nabla \tr T +lots.
\end{align*}
Therefore, we are seeking a diffeomorphism invariant functional on $G_2$-structures for which the first variation will be of the form
\begin{equation}\label{eqn:first_var_general}
\begin{aligned}
D\mathcal F_\varphi (h\diamond\varphi + X\lrcorner\psi) &= \int_M \langle Q_1 + \Fop(\varphi) g, h \rangle +\langle Q_2, X\rangle d\mu\\
&=\int_M \langle h,-\ric +a\mathcal L_{\Vop T} g + \beta \scal g +lots\rangle d\mu \\
&+\int_M \langle X, (1+a)\Div T-a \nabla \tr T +lots\rangle d\mu.
\end{aligned}
\end{equation}

By Proposition \ref{prop:uniqueness} however, if such functional exists, we know that $a=-\frac{1}{3}$ and $\beta=\frac{1}{6}$.

We can now easily come up with a functional with the required properties using Remark \ref{rmk:functionals_EL}. Namely, the operators in \eqref{eqn:first_var_general} correspond to the choices $\alpha=-1$, $\beta=\frac{1}{6}$, $\gamma=-\frac{1}{3}$, $\delta=\frac{2}{3}$, $\epsilon=\frac{1}{3}$, $\zeta=0$. 

The corresponding functional then is
\begin{equation}\label{eqn:g2_hilbert}
\mathcal F(\varphi)=\int_M \frac{1}{6} \scal  -\frac{1}{3} |T|^2  -\frac{1}{6}  (\tr T)^2 d\mu. 
\end{equation}

We will refer to $\mathcal F$ as the $G_2$-Hilbert functional.

Using \eqref{eqn:scalar_torsion} then it is easy to see that
\begin{equation}\label{eqn:F_second_form}
\mathcal F(\varphi)= \int_M\left( -\frac{1}{2} |T|^2 +\frac{1}{6} |\Vop T|^2 \right)d\mu,
\end{equation}
since
\begin{equation}\label{eqn:two_forms}
\frac{1}{6} \scal -\frac{1}{3} |T|^2 -\frac{1}{6} (\tr T)^2 = -\frac{1}{3} \Div \Vop T -\frac{1}{2} |T|^2 +\frac{1}{6} |\Vop T|^2.
\end{equation}

\begin{proposition}\label{prop:variation_lagrangian}
Let $M$ be a $7$-manifold with a $G_2$-structure $\varphi$ inducing the Riemannian metric $g$ on $M$, and let $\psi=*\varphi$. Let $\{\varphi_t\}_{t\in (-\varepsilon,\varepsilon)}$ be a family of $G_2$-structures on $M$ such that $\varphi_0=\varphi$ and
$$\left.\frac{d}{dt}\right|_{t=0} \varphi_t = h\diamond \varphi + X\lrcorner \psi$$
for some symmetric $2$-tensor $h$ and $1$-form $X$. Then
\begin{align*}
&\left.\frac{d}{dt}\right|_{t=0} \left(\frac{1}{6} \scal -\frac{1}{6}(\tr T)^2 -\frac{1}{3} |T|^2\right)=\langle h,\hat P_1\rangle +\langle X,P_2\rangle + \Div A,
\end{align*}
where 
\begin{align*}
\hat P_1&= -\ric -\frac{1}{3}\mathcal L_{\Vop T} g -\frac{2}{3} (T\circ (\Vop T\lrcorner \varphi))_\sym + \tr T T_\sym,\\
P_2 &=  \frac{2}{3}  \Div T + \frac{1}{3}  \nabla \tr T + \frac{1}{3}\tr T \Vop T,\\
A&=-\frac{1}{3} \nabla \tr h + \frac{1}{3}  \Div h -\frac{2}{3} \Vop(h\circ T) -\frac{2}{3} T(X)-\frac{1}{3} \tr T X.
\end{align*}
\end{proposition}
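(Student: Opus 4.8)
The plan is to differentiate each of the three summands of $\Fop$ pointwise and add the results with weights $\tfrac16,-\tfrac16,-\tfrac13$, keeping \emph{every} term that carries a derivative of $h$ or $X$. Integrating those derivative terms by parts at the pointwise level splits each of them into an explicit divergence, which I collect into $\Div A$, and an algebraic remainder, which I collect into the $\langle h,\cdot\rangle$ and $\langle X,\cdot\rangle$ parts. For the scalar curvature I would use that $\scal$ depends only on the induced metric and that $\tfrac{d}{dt}g=2h$ by \eqref{eqn:metric_evol}, so the Riemannian variation formula \eqref{eqn:var_scal} applied to $2h$ gives
$$\tfrac16\tfrac{d}{dt}\scal=-\tfrac13\langle\ric,h\rangle+\Div\left(-\tfrac13\nabla\tr h+\tfrac13\Div h\right),$$
which already yields the first two terms of $A$ and the contribution $-\tfrac13\ric$ to $\hat P_1$.

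For $(\tr T)^2$ I would combine the torsion variation \eqref{eqn:torsion_variation} with $\tfrac{d}{dt}g^{ij}=-2h^{ij}$. Here the term $\nabla_a h_{bi}\varphi_{abi}$ appearing in the trace of $\dot T$ vanishes, since $h$ is symmetric while $\varphi_{abi}$ is antisymmetric in $b,i$; this leaves $\tfrac{d}{dt}\tr T=-\langle h,T\rangle-\langle X,\Vop T\rangle+\Div X$. The only derivative term, $2\tr T\,\Div X$, integrates by parts to $2\Div(\tr T\,X)-2\langle\nabla\tr T,X\rangle$, so after the weight $-\tfrac16$ it contributes $-\tfrac13\tr T\,X$ to $A$ and $\tfrac13\nabla\tr T$ to $P_2$.

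The term $|T|^2$ is the heart of the computation. Writing $|T|^2=g^{ik}g^{jl}T_{ij}T_{kl}$ and differentiating, the $\dot g$-contributions combine with the algebraic part of $\dot T$ (one uses that $(T^tT)_{la}\varphi_{mal}=0$ by symmetry to kill the algebraic $X$-term), while the two derivative terms $2T_{kl}\nabla_a h_{bk}\varphi_{abl}$ and $2T_{kl}\nabla_kX_l$ are integrated by parts. Using the contraction identity $T_{kl}h_{bk}\varphi_{abl}=\Vop(h\circ T)_a$, the first produces the divergence $2\Div(\Vop(h\circ T))$ together with an algebraic remainder $-2h_{bk}\nabla_a(T_{kl}\varphi_{abl})$, and the second produces $2\Div(T(X))-2\langle\Div T,X\rangle$; after the weight $-\tfrac13$ these give exactly the remaining terms $-\tfrac23\Vop(h\circ T)$ and $-\tfrac23 T(X)$ of $A$, so that $A$ is completely identified. \textbf{The main obstacle} is the algebraic remainder $-2h_{bk}\nabla_a(T_{kl}\varphi_{abl})$: the quantity $\nabla_a T_{kl}\varphi_{abl}$ is the operator $(K_2)_{kb}$, and rewriting it in terms of $\ric$, $\mathcal L_{\Vop T}g$ and lower-order torsion requires the $G_2$-Bianchi identity \eqref{eqn:g2_bianchi}, Proposition \ref{prop:g2bianchi_identities}, and the identities \eqref{eqn:id1}--\eqref{eqn:id2}.

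To avoid grinding through this last reduction by hand, I would instead integrate the resulting pointwise identity $\tfrac{d}{dt}\Fop=\langle h,P_1\rangle+\langle X,P_2\rangle+\Div A$ over $M$: the divergence drops, while $\tfrac{d}{dt}d\mu=\tr h\,d\mu$ contributes an extra $\langle\Fop g,h\rangle$. Comparing with the weighted combination $\tfrac16,-\tfrac16,-\tfrac13$ of the \emph{integrated} formulas of Proposition \ref{variationofbasicfunctionals} — whose $h$-coefficient collapses to $\hat P_1+\Fop g$ and whose $X$-coefficient is exactly $P_2$ — the fundamental lemma of the calculus of variations forces the algebraic coefficients to equal $\hat P_1$ and $P_2$. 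Since $A$ has already been pinned down explicitly by the pointwise integration by parts, this completes the proof.
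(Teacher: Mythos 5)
Your proposal is correct, and for the one genuinely hard step it takes a different route from the paper. The paper's proof is entirely pointwise and self-contained: after isolating the derivative terms, it reduces the algebraic remainder of $\langle T,\curl(h)\rangle$ by hand, using the $G_2$-Bianchi identity \eqref{eqn:g2_bianchi}, the contraction identities, Proposition \ref{prop:g2bianchi_identities} and \eqref{eqn:id1}, to arrive at $\langle h,\ric+\tfrac12\mathcal L_{\Vop T}g-\tr T\,T_\sym-(T\circ\Pop(T))_\sym+(T^2)_\sym\rangle$ plus an exact divergence --- precisely the reduction you flag as the main obstacle. You instead pin down $A$ by the same pointwise integrations by parts (your expression for $A$ agrees term by term with the paper's) and then determine the algebraic coefficients $Q_1,Q_2$ by integrating over $M$, comparing with the weighted combination of the integrated formulas in Proposition \ref{variationofbasicfunctionals}, and invoking the fundamental lemma of the calculus of variations. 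This is logically sound and not circular, since Proposition \ref{variationofbasicfunctionals} is established independently (via Corollary 5.35 of \cite{flows2}); your weighted combination of its $h$- and $X$-coefficients does collapse to $\hat P_1+\Fop\,g$ and $P_2$ respectively, and since every pair $(h,X)$ arises as a variation direction ($\Omega^3_+$ being open in $\Omega^3$), the fundamental lemma applies. Two small points to make explicit: the lemma determines $Q_1$ only through its symmetric part, which is harmless because $\hat P_1$ is symmetric and the pairing is against symmetric $h$; and your shortcut trades self-containedness for brevity, since the hard identities are not avoided but merely absorbed into the citation underlying Proposition \ref{variationofbasicfunctionals}, whereas the paper's hand computation effectively re-derives that variation formula from scratch.
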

\begin{proof}
We need to compute the variation of $|T|^2=T_{pq} T_{ab}g^{pa}g^{qb}$, $(\tr T)$ and $\scal$, collecting all terms involving derivatives of $h$ and $X$ into divergence terms.

By \eqref{eqn:torsion_variation}, we obtain
\begin{equation*}
 \begin{aligned}
 \frac{\partial}{\partial t} |T|^2&=2\frac{\partial}{\partial t} T_{pq} T_{pq} - 2h_{pa}T_{pq}T_{aq} - 2h_{qb}T_{pq}T_{pb}\\
 &=2(T_{pm}h_{mq} + T_{pm}X_l\varphi_{lmq} +\nabla_m h_{lp} \varphi_{mlq} +\nabla_p X_q)T_{pq} \\
 &-2\langle h, T\circ T^t\rangle - 2 \langle h, T^t\circ T\rangle\\
 &=2\langle T,\curl(h)\rangle +2\langle\nabla X,T\rangle - 2\langle h, T\circ T^t\rangle.
 \end{aligned}
 \end{equation*}
 Now, clearly 
 \begin{equation*}
 \langle\nabla X, T\rangle= \nabla_a X_b T_{ab}=\nabla_a (X_b T_{ab}) - \langle X, \Div T\rangle
 \end{equation*}
 and
  \begin{align*}
 \langle T,\curl(h)\rangle &= T_{pq} \nabla_a h_{bp}\varphi_{abq}\\
 &=\nabla_a (T_{pq} h_{bp} \varphi_{abq}) - \nabla_a T_{pq} \varphi_{abq} h_{bp} - T_{pq}T_{am}\psi_{mabq} h_{bp}\\
 &=\nabla_a (T_{pq} h_{bp} \varphi_{abq}) -\left(\nabla_p T_{aq} +T_{ai}T_{pj}\varphi_{ijq}+\frac{1}{2} R_{apij}\varphi_{ijq} \right)\varphi_{abq} h_{bp}\\
 &-T_{pq}\Pop(T)_{qb} h_{bp},\\
 &=\nabla_a (T_{pq} h_{bp} \varphi_{abq}) +\nabla_p (T_{aq} \varphi_{aqb}) h_{bp} - T_{aq} T_{pm}\psi_{maqb} h_{bp}\\
 &-T_{ai} T_{pj} (g_{ia} g_{jb} - g_{ib} g_{ja}-\psi_{ijab}) h_{bp}\\
 &-\frac{1}{2} R_{apij} (g_{ia} g_{jb} - g_{ib} g_{ja}-\psi_{ijab}) h_{bp}-T_{pq}\Pop(T)_{qb} h_{bp},\\
 &=\nabla_a (T_{pq} h_{bp}\varphi_{abq}) +\nabla_p \Vop T_b h_{bp}- T_{pm}\Pop(T)_{mb} h_{bp}\\
 &-\tr T T_{pb} h_{bp} + (T^2)_{pb} h_{bp} + T_{pj} \Pop (T)_{jb} h_{bp}\\
 &-\frac{1}{2}(-R_{pb} -R_{pb}) h_{bp} - T_{pq} \Pop(T)_{qb} h_{bp},\\
 &=\nabla_a (T_{pq} h_{bp} \varphi_{abq}) + \langle h, \frac{1}{2}\mathcal L_{\Vop T} g\rangle -\langle h, T\circ \Pop(T)\rangle\\
 &-\langle h, \tr T T\rangle + \langle h, T^2\rangle +\langle \ric , h\rangle,\\
 &=\langle h, \ric+\frac{1}{2}\mathcal L_{\Vop T} g- \tr T T_{\sym} - (T\circ \Pop(T))_{\sym} + (T^2)_{\sym} \rangle \\
 &+ \nabla_a (T_{pq} h_{bp} \varphi_{abq}).
 \end{align*}
 
 Therefore,
 \begin{align*}
 &\frac{\partial}{\partial t} |T|^2 =\\
 &=\langle h, 2\ric +\mathcal L_{VT} g - 2\tr T T_{\sym} - 2 (T\circ P(T))_{\sym} +2 (T^2)_{\sym} - 2 T\circ T^t\rangle\\
 &+\langle X, -2\Div T\rangle+2\nabla_a (T_{pq} h_{bp} \varphi_{abq}) +2\nabla_a(X_b T_{ab}).
 \end{align*}
 Moreover, using \eqref{eqn:id1} we obtain the more elegant form
 \begin{align*}
\frac{\partial}{\partial t} |T|^2&= \langle h, 2\ric +\mathcal L_{\Vop T} g - 2\tr T T_{\sym} +2T\circ (\Vop T\lrcorner \varphi)\rangle+\langle X, -2\Div T\rangle\\
&+2\nabla_a (T_{pq} h_{bp} \varphi_{abq}) +2\nabla_a(X_b T_{ab}).
 \end{align*}

For the variation of $(\tr T)^2$ we compute
\begin{align*}
&\frac{\partial}{\partial t} (\tr T)^2 = 2 \tr T \frac{\partial}{\partial t} \tr T\\
&=-4\tr T \langle h, T\rangle +2\tr T \tr\left(\frac{\partial}{\partial t} T\right)\\
&=-4\tr T \langle h, T\rangle +2\tr T (T_{pm} h_{mp} + T_{pm} X_l \varphi_{lmp} +\nabla_m h_{lp}\varphi_{mlp} + \Div X)\\
&=-2 \langle h, \tr T T_{\sym}\rangle -2\tr T\langle X,\Vop T\rangle + 2\tr T \Div X\\
&=-2 \langle h, \tr T T_{\sym}\rangle-2\tr T\langle X, \Vop T\rangle +2\nabla_a(\tr T X_a) - 2\langle\nabla \tr T,X\rangle,\\
&=\Div (2 \tr T X) -2\langle X, \nabla \tr T +\tr T \Vop T\rangle -2 \langle h, \tr T T_{\sym}\rangle
\end{align*}
Therefore, since $\left.\frac{\partial g}{\partial t}\right|_{t=0}=2h$, combining the computations above with \eqref{eqn:var_scal}
we obtain
\begin{align*}
&\frac{\partial}{\partial t} \left( \frac{1}{6} \scal - \frac{1}{3} |T|^2 - \frac{1}{6}(\tr T)^2\right)=\\
&=\frac{2}{6} \left(-\Delta \tr h + \Div \Div h - \langle h, \ric\rangle \right)\\
&-\frac{1}{3} \left( \langle h,2\ric +\mathcal L_{VT} g -2\tr T T_{\sym} +2T\circ (VT\lrcorner \varphi)\rangle\right.\\
&\left.+ \langle X,-2\Div T\rangle \right.\\
&\left.+ 2\nabla_a(T_{pq} h_{bp}\varphi_{abq} +X_b T_{ab}) \right)\\
&-\frac{2}{6}\left( \Div (\tr T X) -\langle X,\nabla \tr T +\tr T \Vop T\rangle - \langle h, \tr T T_{\sym}\rangle\right)\\
&=\langle h, -\ric-\frac{1}{3}\mathcal L_{\Vop T} g + \tr T T_{\sym} - \frac{2}{3} T\circ (\Vop T \lrcorner \varphi)\rangle\\
&+\langle X, \frac{2}{3} \Div T +\frac{1}{3} \nabla \tr T + \frac{1}{3} \tr T \Vop T \rangle  \\
&+\Div A,
\end{align*}
where
\begin{align*}
A_a&=- \frac{1}{3} \nabla_a \tr h + \frac{1}{3}  \Div h_a -\frac{2}{3} T_{pq}h_{bp} \varphi_{abq} -\frac{2}{3} X_b T_{ab} -\frac{1}{3} \tr T X_a\\
&=-\frac{1}{3} \nabla_a \tr h + \frac{1}{3}  \Div h_a -\frac{2}{3} \Vop(h\circ T)_a -\frac{2}{3} X_b T_{ab} -\frac{1}{3} \tr T X_a
\end{align*}

\end{proof}
Therefore the $G_2$-Hilbert functional indeed fulfills the requirements set at the beginning of this section. The first variation of the $G_2$-Hilbert functional can now be directly computed by Proposition \ref{prop:variation_lagrangian}.

\begin{proposition} \label{firstvariationofF}
Let $M$ be a $7$-manifold with a $G_2$-structure $\varphi$ inducing the Riemannian metric $g$ on $M$, and let $\psi=*\varphi$. Let $\{\varphi_t\}_{t\in (-\varepsilon,\varepsilon)}$ be a family of $G_2$-structures on $M$ such that $\varphi_0=\varphi$ and
$$\left.\frac{d}{dt}\right|_{t=0} \varphi_t = h\diamond \varphi + X\lrcorner \psi$$
for some symmetric $2$-tensor $h$ and $1$-form $X$. Then 
\begin{align*}
\left. \frac{d}{d t}\right|_{t=0}  \mathcal{F}(\varphi_t) &=   \int_M \langle h, P_1(\varphi) \rangle d \mu_g  + \int_M \langle X , P_2(\varphi) \rangle d \mu_{g_\varphi} \\
&=\int_M \langle h, \hat P_1 +\left(\frac{1}{6}\scal - \frac{1}{6}(\tr T)^2 -\frac{1}{3} |T|^2\right) g\rangle +\langle X,P_2\rangle d\mu
\end{align*}
where 
\begin{equation*}
\begin{aligned} 
P_1 &=  -\ric -\frac{1}{3}\mathcal L_{\Vop T} g -\frac{2}{3} (T\circ (\Vop T\lrcorner \varphi))_\sym + \tr T T_\sym +\left(\frac{1}{6} \scal  -\frac{1}{6} (\tr T)^2 -\frac{1}{3} |T|^2 \right) g, \\
P_2 &=  \frac{2}{3}  \Div T + \frac{1}{3}  \nabla \tr T + \frac{1}{3}\tr T \Vop T.
\end{aligned}
\end{equation*}
Therefore, the $\langle \cdot,\cdot\rangle_{L^2}$-gradient of the $G_2$-Hilbert functional is given by the operator
$$P(\varphi)=P_1(\varphi) \diamond \varphi+P_2(\varphi)\lrcorner\psi.$$
\end{proposition}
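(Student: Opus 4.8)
The plan is to reduce everything to the pointwise variation of the integrand $\Fop$, which is precisely the content of Proposition \ref{prop:variation_lagrangian}, and then integrate over $M$ while correctly accounting for the variation of the volume form. Writing $\mathcal F(\varphi)=\int_M \Fop(\varphi)\,d\mu_g$, the product rule gives
$$\left.\frac{d}{dt}\right|_{t=0}\mathcal F(\varphi_t)=\int_M \left(\left.\frac{d}{dt}\right|_{t=0}\Fop\right)d\mu+\int_M \Fop\left(\left.\frac{d}{dt}\right|_{t=0}d\mu\right),$$
so the two ingredients I need are the variation of $\Fop$ and the variation of $d\mu$.

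First I would substitute Proposition \ref{prop:variation_lagrangian} into the first integral, which contributes $\int_M \left(\langle h,\hat P_1\rangle+\langle X,P_2\rangle+\Div A\right)d\mu$. Since $M$ is compact without boundary and $A$ is a globally defined $1$-form (built from $h$, $X$ and the torsion), the divergence theorem forces $\int_M \Div A\,d\mu=0$, leaving only $\int_M \langle h,\hat P_1\rangle+\langle X,P_2\rangle\,d\mu$. For the second integral I would invoke the variation of the volume form recorded in the subsection on variations of $G_2$-structures, namely $\left.\frac{d}{dt}\right|_{t=0}d\mu=\tr h\,d\mu$. Because $\langle \Fop\,g,h\rangle=\Fop\,\tr h$, this second integral equals $\int_M \langle \Fop\,g,h\rangle\,d\mu$, and it contributes only to the $h$-slot.

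Adding the two contributions yields
$$\left.\frac{d}{dt}\right|_{t=0}\mathcal F(\varphi_t)=\int_M \langle h,\hat P_1+\Fop\,g\rangle+\langle X,P_2\rangle\,d\mu,$$
and since $\Fop=\frac{1}{6}\scal-\frac{1}{6}(\tr T)^2-\frac{1}{3}|T|^2$, we recognize $P_1=\hat P_1+\Fop\,g$ exactly as stated. The gradient claim is then immediate: by the definition of $\langle\cdot,\cdot\rangle_{L^2}$ in \eqref{eqn:L2_inner_products}, the pair $(P_1,P_2)$ represents the $3$-form $P_1\diamond\varphi+P_2\lrcorner\psi$, which is therefore its $\langle\cdot,\cdot\rangle_{L^2}$-gradient.

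Since the substantive tensorial computation has already been performed in Proposition \ref{prop:variation_lagrangian}, I do not expect a genuine obstacle. The only steps requiring care are the bookkeeping of the volume-form term — ensuring the extra $\Fop\,g$ lands in the $h$-slot and leaves $P_2$ untouched — and confirming that the $1$-form $A$ is globally defined so that Stokes' theorem discards $\int_M \Div A\,d\mu$. Under the standing compactness assumption on $M$ this is automatic; for noncompact $M$ one would instead need decay hypotheses to justify dropping the divergence term.
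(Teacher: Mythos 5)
Your proposal is correct and is precisely the argument the paper intends: the proposition is stated as an immediate consequence of Proposition \ref{prop:variation_lagrangian} together with the volume-form variation $\left.\tfrac{d}{dt}\right|_{t=0}d\mu=\tr h\,d\mu$, with the divergence term discarded by Stokes' theorem. Your remark that the extra term $\Fop\,g$ lands only in the $h$-slot (since $\Fop\,\tr h=\langle\Fop\,g,h\rangle$) and your caveat about compactness are both accurate.
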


The volume-normalized $G_2$-Hilbert functional $\mathcal{\tilde F}$ is defined as
\begin{equation}\label{volumenormalizedF}
 \mathcal{\tilde{F}}(\varphi) = \frac{1}{\vol(g_\varphi)^{\frac{5}{7}}} \mathcal F(\varphi).  
\end{equation}

\begin{proposition} \label{prop:norm_F_variation}
Let $M$ be a $7$-manifold with a $G_2$-structure $\varphi$ inducing the Riemannian metric $g$ on $M$, and let $\psi=*\varphi$. Let $\{\varphi_t\}_{t\in (-\varepsilon,\varepsilon)}$ be a family of $G_2$-structures on $M$ such that $\varphi_0=\varphi$ and
$$\left.\frac{d}{dt}\right|_{t=0} \varphi_t = h\diamond \varphi + X\lrcorner \psi$$
for some symmetric $2$-tensor $h$ and $1$-form $X$. Then

\begin{align*}
\left. \frac{\partial}{\partial t}\right|_{t=0} \tilde{\mathcal F}(\varphi_t) = \vol(g)^{-\frac{5}{7}} \left( \int_M \langle h, P_1 - \frac{5}{7} \vol(g)^{-1}\mathcal F(\varphi)  g \rangle d \mu_g + \int_M \langle X, P_2 \rangle d \mu_g \right).
\end{align*}
\end{proposition}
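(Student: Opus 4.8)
The plan is to differentiate the volume-normalized functional $\tilde{\mathcal F}(\varphi)=\vol(g_\varphi)^{-5/7}\mathcal F(\varphi)$ directly using the product rule, treating the scalar prefactor $\vol(g_\varphi)^{-5/7}$ and the functional $\mathcal F(\varphi)$ as two factors. First I would write
\begin{equation*}
\left.\frac{\partial}{\partial t}\right|_{t=0}\tilde{\mathcal F}(\varphi_t) = \left(\left.\frac{\partial}{\partial t}\right|_{t=0}\vol(g_{\varphi_t})^{-5/7}\right)\mathcal F(\varphi) + \vol(g)^{-5/7}\left.\frac{\partial}{\partial t}\right|_{t=0}\mathcal F(\varphi_t).
\end{equation*}
The second factor is already handled by Proposition \ref{firstvariationofF}, which gives the $P_1$ and $P_2$ terms, so the only genuinely new computation is the variation of the volume.

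Next I would compute the variation of $\vol(g_{\varphi_t})^{-5/7}$. Since $\vol(g)=\int_M d\mu_g$ and the variation of the volume form is $\left.\frac{\partial}{\partial t}\right|_{t=0}d\mu = \tr h\, d\mu$ (recorded in the preliminaries, following the metric variation \eqref{eqn:metric_evol}), the chain rule yields
\begin{equation*}
\left.\frac{\partial}{\partial t}\right|_{t=0}\vol(g_{\varphi_t})^{-5/7} = -\frac{5}{7}\vol(g)^{-5/7-1}\int_M \tr h\, d\mu = -\frac{5}{7}\vol(g)^{-12/7}\int_M \langle g,h\rangle\, d\mu,
\end{equation*}
where I have used $\tr h = \langle g,h\rangle$. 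Multiplying this by $\mathcal F(\varphi)$ and pulling the overall factor $\vol(g)^{-5/7}$ out front, this contribution becomes $-\frac{5}{7}\vol(g)^{-5/7}\vol(g)^{-1}\mathcal F(\varphi)\int_M\langle g,h\rangle\, d\mu$, which is exactly the term $-\frac{5}{7}\vol(g)^{-1}\mathcal F(\varphi)\, g$ paired against $h$ in the claimed formula.

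Finally I would combine the two contributions. Substituting the expression from Proposition \ref{firstvariationofF} for the second summand and the volume-variation computation for the first, both terms carry a common factor $\vol(g)^{-5/7}$, which I factor out, and the two $h$-integrands merge into $\langle h, P_1 - \frac{5}{7}\vol(g)^{-1}\mathcal F(\varphi)\, g\rangle$, while the $X$-integrand $\langle X, P_2\rangle$ is unaffected since the volume prefactor does not interact with the $X$-direction. This reproduces the stated identity. I do not expect any serious obstacle here: the argument is a routine application of the product and chain rules combined with the already-established first variation of $\mathcal F$, and the only point requiring minor care is correctly tracking the exponent arithmetic $-5/7-1 = -12/7$ and recognizing $\tr h$ as $\langle g,h\rangle$ so that the volume term appears in the inner-product form matching the rest of the expression.
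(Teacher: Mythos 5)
Your argument is correct and is precisely the routine product-rule/chain-rule computation the paper has in mind (the paper states Proposition \ref{prop:norm_F_variation} without a written proof, treating it as an immediate consequence of Proposition \ref{firstvariationofF} and the volume variation $\left.\frac{\partial}{\partial t}\right|_{t=0} d\mu = \tr h\, d\mu$). The exponent arithmetic and the identification $\tr h = \langle g,h\rangle$ are handled correctly, so there is nothing to add.
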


\begin{definition}
We will call a $G_2$-structure $\varphi$ on a compact $7$-manifold $M$ a \textit{static $G_2$-structure} if it is a critical point of the volume-normalized $G_2$-Hilbert functional, namely if 
\begin{align*}
P_1(\varphi) &= \lambda g,\\
P_2(\varphi)&=0,
\end{align*}
for some real number $\lambda$.
\end{definition}

We finish this subsection, by analyzing some properties of static $G_2$-structures.

\begin{proposition} \label{criticalpointstheorem}
Torsion free $G_2$ structures are critical points of the functional $\mathcal{F}$ while nearly $G_2$ structures are critical points of the normalized functional $\tilde{\mathcal{F}}$.
\end{proposition}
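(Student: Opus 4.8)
The plan is to reduce both assertions to the first-variation formulas already established, namely Proposition \ref{firstvariationofF} and Proposition \ref{prop:norm_F_variation}. By the former, a $G_2$-structure $\varphi$ is a critical point of $\mathcal F$ precisely when $P_1(\varphi)=0$ and $P_2(\varphi)=0$; by the latter, $\varphi$ is a critical point of the volume-normalized functional $\tilde{\mathcal F}$ precisely when it is static, i.e. $P_1(\varphi)=\frac{5}{7}\vol(g)^{-1}\mathcal F(\varphi)\,g$ and $P_2(\varphi)=0$. Thus in each case the task is purely to evaluate $P_1$ and $P_2$ on the relevant class of $G_2$-structures. The asymmetry in the statement (torsion-free for $\mathcal F$, nearly $G_2$ for $\tilde{\mathcal F}$) is anticipated: I expect $P_1$ to vanish outright in the torsion-free case but only to be a nonzero multiple of $g$ in the nearly $G_2$ case, which is exactly the multiple the volume normalization absorbs.

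For the torsion-free case I would simply substitute $T=0$. Then $\tr T=0$, $\Vop T=0$, every quadratic-in-torsion term occurring in $P_1$ and $P_2$ vanishes, and by Proposition \ref{prop:g2bianchi_identities} one has $\ric=0$ and $\scal=0$. Hence $P_1(\varphi)=0$ and $P_2(\varphi)=0$, so $\varphi$ is critical for $\mathcal F$; note that $\mathcal F(\varphi)=0$ here, so it is automatically critical for $\tilde{\mathcal F}$ as well.

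The nearly $G_2$ case is where the real computation lies. Writing $T=cg$ with $c$ constant, the first key observation is that $\Vop T=c\,g_{ij}\varphi_{ijk}=0$ by the antisymmetry of $\varphi$; consequently $\mathcal L_{\Vop T}g=0$, the term $(T\circ(\Vop T\lrcorner\varphi))_\sym$ vanishes, and $\Div T=0$, $\nabla\tr T=0$ (the latter two using that $c$ is constant). This immediately yields $P_2(\varphi)=0$. For $P_1$ I would compute the remaining invariants: $\tr T=7c$, $|T|^2=7c^2$, and from \eqref{eqn:scalar_torsion} with $\Vop T=0$ one gets $\scal=(\tr T)^2-|T|^2=42c^2$; the Ricci term comes from Proposition \ref{prop:g2bianchi_identities}, where $(K_2)_\sym=0$ (since $\nabla T=0$) and $(T^2)_\sym=c^2 g$ force $\ric=\tr T\,T_\sym-(T^2)_\sym=6c^2 g$. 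Substituting into $P_1$ collapses it to a multiple of $g$, and the arithmetic should give $P_1(\varphi)=\bigl(-6+7-\tfrac{7}{2}\bigr)c^2 g=-\tfrac{5}{2}c^2 g$.

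It then remains to verify the Lagrange-multiplier condition. Evaluating the integrand gives $\Fop(\varphi)=\frac{1}{6}\scal-\frac{1}{3}|T|^2-\frac{1}{6}(\tr T)^2=-\frac{7}{2}c^2$, a constant, so $\mathcal F(\varphi)=-\frac{7}{2}c^2\vol(g)$ and therefore $\frac{5}{7}\vol(g)^{-1}\mathcal F(\varphi)=-\frac{5}{2}c^2$, which matches the computed $P_1(\varphi)$. Hence $\varphi$ is static and so critical for $\tilde{\mathcal F}$. I expect the main obstacle to be not conceptual but the bookkeeping in this last step: one must correctly assemble $-\ric+\tr T\,T_\sym$ with the scalar contribution $\bigl(\frac{1}{6}\scal-\frac{1}{6}(\tr T)^2-\frac{1}{3}|T|^2\bigr)g$ and check that the coefficients combine to precisely $-\frac{5}{2}c^2$, the same number that independently emerges from $\frac{5}{7}\vol(g)^{-1}\mathcal F(\varphi)$. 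That this pointwise-constant value of $P_1$ agrees with the normalization constant is the crux making the nearly $G_2$ structures critical for $\tilde{\mathcal F}$ rather than for $\mathcal F$.
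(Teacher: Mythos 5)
Your proposal is correct and follows essentially the same route as the paper's proof: both reduce the claim to the first-variation formulas, evaluate $P_1$ and $P_2$ on structures with $T=cg$ using $\Vop T=0$, $\ric=6c^2g$, $\scal=42c^2$, and verify that $P_1=-\tfrac{5}{2}c^2 g$ matches $\tfrac{5}{7}\vol(g)^{-1}\mathcal F(\varphi)\,g$. The only cosmetic difference is that you treat the torsion-free case separately with $c=0$, whereas the paper handles both cases in one computation.
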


\begin{proof}
When $\varphi$ is torsion free or nearly-$G_2$, there is a constant $c$ such that its torsion satisfies $T=c g$. In particular, $\Vop T=0$, and $P_2(\varphi)=0$ since $\nabla T=0$.
 Moreover, using Proposition \ref{prop:g2bianchi_identities} we obtain
\begin{align*}
\ric=6c^2 g,\quad
\scal= 42 c^2,\quad
\tr T T_\sym= 7c^2 g,\quad
(\tr T)^2= 49 c^2,\quad
|T|^2 =7 c^2. 
\end{align*}
It is then immediate that
\begin{equation*}
    P_1 =  -\ric -\frac{1}{3}\mathcal L_{\Vop T} g -\frac{2}{3} (T\circ (\Vop T\lrcorner \phi))_\sym + \tr T T_\sym +\frac{1}{6} \scal g -\frac{1}{6} (\tr T)^2 g-\frac{1}{3} |T|^2 g= -\frac{5}{2} c^2 g.
\end{equation*}
and 
\begin{equation*}
\frac{5}{7}\vol(g)^{-1} \mathcal F(\varphi) = \frac{5}{7} \vol(g)^{-1} \int_M  -\frac{1}{2}|T|^2 d\mu_g =\frac{5}{7}\vol(g)^{-1}\int_M -\frac{7}{2}c^2 d\mu_g=-\frac{5c^2}{2}.
\end{equation*}
Therefore,  $P_1-\frac{5}{7} \vol(g)^{-1} \mathcal F(\varphi) g=0$ and $P_2=0$, so indeed by Proposition \ref{prop:norm_F_variation}, $\varphi$ is a critical point for the normalized $G_2$-Hilbert functional.

\end{proof}

\begin{proposition}
 Let $\varphi$ be a $G_2$-structure  on a $7$-manifold $M$.
 \begin{enumerate}
 \item If $\Vop T=0$ then $\mathcal F(\varphi)\leq 0$, and $\mathcal F(\varphi)=0$ if and only if $\varphi$ is torsion free.
 \item If $\scal=0$ then $\mathcal F(\varphi)\leq 0$, and $\mathcal F(\varphi)=0$ if and only if $\varphi$ is torsion free.
 \item If $\varphi$ is a critical point of $\mathcal F$ and either $\Vop T=0$ or $\scal = 0$, then it is torsion free. In particular, critical points of $\mathcal F$ which are either closed, co-closed or scalar flat $G_2$-structures are torsion free.
 \end{enumerate}
\end{proposition}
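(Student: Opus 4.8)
The plan is to exploit the two equivalent expressions for the integrand of $\mathcal F$ derived in the excerpt, namely \eqref{eqn:g2_hilbert} and \eqref{eqn:F_second_form}, together with the pointwise algebraic relation between $|T|^2$, $|\Vop T|^2$, and the torsion decomposition. The key structural fact is that, by \eqref{eqn:id2}, we have $|\Vop T|^2 = |T|^2 - \langle T,T^t\rangle - \langle T,\Pop(T)\rangle$, and that $|\Vop T|^2 \le c\,|T|^2$ for a universal constant, with equality controlling the torsion type. The strategy for each of the three parts is to reduce the sign of $\mathcal F(\varphi)$ to a pointwise inequality on the integrand.

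For part (1), where $\Vop T = 0$, I would substitute directly into \eqref{eqn:F_second_form} to get $\mathcal F(\varphi) = \int_M -\tfrac{1}{2}|T|^2\, d\mu \le 0$, with equality forcing $|T|^2 \equiv 0$ pointwise, hence $T=0$ and $\varphi$ torsion free. For part (2), where $\scal = 0$, I would instead use \eqref{eqn:g2_hilbert}: the integrand becomes $-\tfrac{1}{3}|T|^2 - \tfrac{1}{6}(\tr T)^2 \le 0$, again with equality if and only if $T=0$. These two parts are essentially immediate once the right representation of $\mathcal F$ is selected, so the main work is only bookkeeping about which of the two formulae makes the relevant term drop out.

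For part (3), I would argue by contradiction or by combining the critical point equations with parts (1)--(2). If $\varphi$ is critical and $\Vop T=0$ (respectively $\scal=0$), then by parts (1)--(2) we have $\mathcal F(\varphi)\le 0$. I expect the cleanest route is to test the first variation formula of Proposition \ref{firstvariationofF} against a judicious variation and combine with the sign information. Specifically, criticality gives $P_1(\varphi)=0$ and $P_2(\varphi)=0$; one can then feed the conformal/trace direction $h = g$, $X=0$ into the variation and extract a relation that, together with $\Vop T=0$ or $\scal=0$, forces $\int_M |T|^2\,d\mu = 0$. The final sentence of the claim is then a direct specialization: closed or co-closed $G_2$-structures are exactly those whose torsion lies in a subspace forcing $\scal=0$ or $\Vop T=0$ (these are the standard torsion-type characterizations), and scalar-flat is the hypothesis of part (2) verbatim.

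The main obstacle I anticipate is part (3): simply invoking parts (1)--(2) gives only $\mathcal F(\varphi)\le 0$, not $T=0$, so one genuinely needs the criticality equations, not merely the sign of $\mathcal F$. The delicate point is verifying that the critical point conditions $P_1=\lambda g$, $P_2=0$ (or their unnormalized $P_1=0$, $P_2=0$ versions), when intersected with the constraint $\Vop T=0$ or $\scal=0$, actually collapse the torsion entirely rather than leaving a nontrivial critical configuration. I would handle this by tracing $P_1$ and contracting $P_2$ with $\Vop T$ or with $\nabla \tr T$ to produce scalar identities; under the extra hypothesis these identities should reduce to $\int_M |T|^2\,d\mu = 0$ after integration by parts, using Proposition \ref{prop:g2bianchi_identities} to re-express the lower-order torsion terms. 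The identification of closed and co-closed structures with the vanishing conditions on $\scal$ and $\Vop T$ is standard $G_2$ torsion-type theory and should require only citing the decomposition of the intrinsic torsion.
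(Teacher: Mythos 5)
Your proposal is correct and follows essentially the same route as the paper: parts (1) and (2) are read off from the two expressions \eqref{eqn:g2_hilbert} and \eqref{eqn:F_second_form} for the integrand, and part (3) rests on the observation that criticality forces $\mathcal F(\varphi)=0$, which is exactly what your test direction $h=g$, $X=0$ (the scaling direction $g\diamond\varphi=3\varphi$) produces via $\int_M \tr P_1\,d\mu = 5\,\mathcal F(\varphi)$, after which the equality cases of (1)--(2) finish the argument. The only caveat is that your ``delicate point'' about contracting $P_2$ with $\Vop T$ or $\nabla\tr T$ is unnecessary, and for the final sentence the relevant standard fact is that both closed and co-closed $G_2$-structures have $\Vop T=0$ (closed does not in general give $\scal=0$).
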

\begin{proof}
Assertions (1) and (2) follow immediately from
$$\mathcal F=\int_M -\frac{1}{2}|T|^2 +\frac{1}{6} |\Vop T|^2 d\mu = \int_M \frac{1}{6}\scal -\frac{1}{3}|T|^2 -\frac{1}{6} (\tr T)^2 d\mu.$$
To obtain Assertion (3) we need to also observe that at a critical point  $\mathcal F=0$.

The statement about critical points that are closed $G_2$-structures follow from the fact that if $\varphi$ is closed then $T\in \Omega^2_{14}$ hence $\Vop T=0$. Similarly, $\Vop T=0$ for co-closed $G_2$-structures. See for instance \cite{flows1}.

\end{proof}

\subsection{Uniqueness of the $G_2$-Hilbert functional} In this subsection we will prove that the quantity
\begin{equation*}
\Fop=\frac{1}{6}\scal - \frac{1}{6} (\tr T)^2 -\frac{1}{3}|T|^2,
\end{equation*}
and the operators
\begin{align*}
\hat P_1 &= -\ric -\frac{1}{3}\mathcal L_{\Vop T} g -\frac{2}{3} (T\circ (\Vop T\lrcorner \varphi))_\sym +\tr T T_\sym,\\
P_2 &= \frac{2}{3} \Div T+\frac{1}{3} \nabla \tr T +\frac{1}{3} \tr T \Vop T,
\end{align*}
are uniquely determined by the following assumptions.
\begin{enumerate}
\item $\Fop$ is linear in the scalar curvature and quadratic in the torsion of $\varphi$.
\item The variation of $\Fop$ is given by 
\begin{equation*}
\left. \frac{\partial}{\partial t}\right|_{t=0} \Fop = \langle \hat P_1, h\rangle + \langle P_2,X\rangle + \Div A,
\end{equation*}
where $A$ is a $1$-from, and  $\varphi\mapsto \hat P_1\diamond \varphi+P_2\lrcorner \psi$ is a special Ricci-like operator, namely $\hat P_1$, $P_2$ are of the form
\begin{align*}
\hat P_1&= -\ric +a \mathcal L_{\Vop T} g + lot,\\
P_2&= (1+a) \Div T+a \nabla \tr T +lot,
\end{align*}
for some $a\in \mathbb R$.
\end{enumerate}
It is easy to see that Assumption 1 implies that $\Fop$ must be a linear combination of the terms
\begin{equation*}
\scal, \quad (\tr T)^2, \quad |T|^2, \quad \langle T, T^t\rangle, \quad \langle T,\Pop(T)\rangle, \quad |\Vop T|^2,
\end{equation*}
which define the functionals in Proposition \ref{variationofbasicfunctionals}. In fact, using Proposition \ref{variationofbasicfunctionals}, or by direct computation, we see that if $\varphi_t$, $t\in (-\epsilon,\epsilon)$ is a smooth variation of $G_2$-structures with $\left.\frac{\partial \phi_t}{\partial t}\right|_{t=0} = h\diamond\varphi+X\lrcorner \psi$, then
\begin{align*}
\left.\frac{\partial}{\partial t}\right|_{t=0} \scal &= \langle h,-2\ric\rangle + \textrm{divergence terms},\\
\left.\frac{\partial}{\partial t}\right|_{t=0} (\tr T)^2 &= \langle h, -2\tr T T_\sym\rangle + \langle X, -2\nabla \tr T -2\tr T \Vop T\rangle +\textrm{divergence terms},\\
\left.\frac{\partial}{\partial t}\right|_{t=0} |T|^2 &= \langle h, 2\ric +\mathcal L_{\Vop T} g -2\tr T T_\sym +2(T\circ(\Vop T\lrcorner \varphi))_\sym \rangle \\
&+ \langle X, -2\Div T\rangle +\textrm{divegence terms},\\
\left.\frac{\partial}{\partial t}\right|_{t=0} \left(\langle T,T^t\rangle + \langle T,\Pop(T)\rangle \right)&=\langle h, 2\ric - \scal\; g +(\tr T)^2 g -2\tr T T_\sym -(\langle T,T^t\rangle + \langle T,\Pop(T)\rangle)g \rangle\\
&+\langle X, -2\nabla \tr T -2\tr T \Vop T\rangle + \textrm{divergence terms},\\
(\textrm{by identity } \eqref{eqn:id2}) \quad&=\langle h, 2\ric - \scal\; g +(\tr T)^2 g -2\tr T T_\sym -|T|^2 g +|\Vop T|^2 g\rangle\\
&+\langle X, -2\nabla \tr T -2\tr T \Vop T\rangle + \textrm{divergence terms},\\
\left.\frac{\partial}{\partial t}\right|_{t=0} |\Vop T|^2 &=\langle h, (\scal - (\tr T)^2 +|T|^2 - |\Vop T|^2)g +\mathcal L_{\Vop T} g +2(T\circ (\Vop T\lrcorner \varphi))_\sym\rangle\\
&+\langle X, -2\Div T +2\nabla \tr T +2\tr T \Vop T\rangle + \textrm{divegence terms}.
\end{align*}
Notice that we don't need to include the terms $\langle T,T^t\rangle$ and $\langle T,\Pop(T)\rangle$ separetly, since their variation will include the symmetric operator $F_{ij}= R_{abcd}\varphi_{abi}\varphi_{cdj}$, which doesn't appear in special Ricci-like operators.

It follows that the variation of any linear combination
$$\Fop= \alpha\scal + \beta (\tr T)^2 + \gamma |T|^2 + \delta( \langle T,T^t\rangle + \langle T,\Pop(T)\rangle ) +\varepsilon |\Vop T|^2$$
is given by
\begin{align*}
&\left.\frac{\partial}{\partial t}\right|_{t=0} \Fop =\\
&=\langle h, (-2\alpha +2\gamma +2\varepsilon) \ric + (\gamma +\delta) \mathcal L_{\Vop T} g + (\delta-\varepsilon) \scal \; g \\
&\quad+ (-2\beta -2\gamma-2\varepsilon) \tr T T_\sym + (2\gamma+2\delta)(T\circ(\Vop T\lrcorner \varphi))_\sym +(\delta-\varepsilon)\left(- (\tr  T)^2 +|T|^2 -|\Vop T|^2\right)g \rangle\\
&+\langle X, (-2\gamma -2\delta) \Div T +(-2\beta +2\delta -2\varepsilon) \nabla\tr T +(-2\beta+2\delta -2\varepsilon) \tr T \Vop T \rangle,\\
&+\textrm{divergence terms}.
\end{align*}
Since we require the highest order terms of the operators in the above expression to be special Ricci-like, we need
\begin{equation}\label{eqn:uniqueness_system}
\begin{aligned}
-2\alpha +2\gamma +2\varepsilon &= -1,\\
\gamma+\delta &= a,\\
\delta &=\varepsilon,\\
-2\gamma -2\delta &= 1+a,\\
-2\beta +2\delta -2\varepsilon &=- a.
\end{aligned}
\end{equation}
The system \eqref{eqn:uniqueness_system} has a $1$-dimensional set of solutions where
\begin{equation*}
a=-\frac{1}{3}, \quad \alpha=\frac{1}{6}, \quad \beta=-\frac{1}{6},\quad \gamma= -\delta -\frac{1}{3},\quad  \varepsilon=\delta,
\end{equation*}
which correspond to the quantities 
$$\Fop_\delta=\frac{1}{6}\scal -\frac{1}{6}(\tr T)^2 -\left(\delta+\frac{1}{3}\right) |T|^2 +\delta (\langle T,T^t\rangle +\langle T,\Pop(T)\rangle)+\delta |\Vop T|^2.$$
However, using identity \eqref{eqn:id2}, we  compute
\begin{align*}
\Fop_\delta &= \frac{1}{6}\scal -\frac{1}{6}(\tr T)^2 -\frac{1}{3} |T|^2+\delta \underbrace{(-|T|^2 + \langle T,T^t\rangle +\langle T,\Pop(T)\rangle+|\Vop T|^2)}_{=0, \textrm{ by } \eqref{eqn:id2}},\\
&=\frac{1}{6}\scal -\frac{1}{6}(\tr T)^2 - \frac{1}{3} |T|^2.
\end{align*}

It follows that Assumptions 1 and 2 determine the quantity
$$\Fop=\frac{1}{6}\scal -\frac{1}{6}(\tr T)^2 - \frac{1}{3} |T|^2$$
uniquely. Thus, the variation of $\Fop$  computed in Proposition \ref{prop:variation_lagrangian} also uniquely distinguishes the operators $\hat P_1$ and $P_2$.

\subsection{Bianchi-type identity} The twice contracted second Bianchi identity 
\begin{equation}\label{eqn:2nd_bianchi}
\Div\left( \ric - \frac{\scal}{2} g\right)=0
\end{equation}
is a consequence of the diffeomorphism invariance of the Einstein--Hilbert functional, since $\mathcal G=-\ric+\frac{\scal}{2}g$  is its gradient, and thus $\Div \mathcal G=0$. Equation \eqref{eqn:2nd_bianchi} can be restated in terms of the Bianchi operator $\beta_g:\mathcal S^2\rightarrow \Omega^1$,
\begin{equation*}
\beta_g(h)=\Div\left(h-\frac{\tr_g h}{2} g\right),
\end{equation*}
since it implies that $\ric \in \ker \beta_g$. This motivates the following proposition.

\begin{proposition}\label{prop:tildeP1_decomp}
The gradient of the $G_2$-Hilbert functional can be expressed in the form 
$$P=\left(\tilde P_1 -\frac{\tr \tilde P_1}{4} g \right)\diamond\varphi + P_2\lrcorner \psi,$$
where $\tilde P_1$ is the special Ricci-like operator given by
$$\tilde P_1=\hat P_1 +\frac{1}{3}\left(|T|^2 - \frac{1}{3} |\Vop T|^2\right) g.$$
Moreover, $\tilde P_1$ is the unique special Ricci-like operator and $\lambda=-\frac{1}{4}$ the unique real number such that $P_1= \tilde P_1 +\lambda \tr \tilde P_1 g$.
\end{proposition}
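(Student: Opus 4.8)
The plan is to reduce everything to a statement about the symmetric $2$-tensor $P_1$: since Proposition \ref{firstvariationofF} already records the gradient as $P = P_1\diamond\varphi + P_2\lrcorner\psi$, both the asserted decomposition and the uniqueness claim concern only the $\diamond$-component, and the $\lrcorner\psi$-component $P_2$ plays no further role. Thus it suffices to show $P_1 = \tilde P_1 - \tfrac{1}{4}\tr\tilde P_1\,g$ and to establish uniqueness of the pair $(\tilde P_1,\lambda)$.

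For the decomposition itself I would proceed by a direct trace computation. Writing $P_1 = \hat P_1 + c\,g$ with $c = \tfrac{1}{6}\scal - \tfrac{1}{6}(\tr T)^2 - \tfrac{1}{3}|T|^2$ as in Proposition \ref{firstvariationofF}, I compute $\tr\hat P_1$ using $\tr\ric = \scal$, $\tr\mathcal L_{\Vop T}g = 2\Div\Vop T$, $\tr(\tr T\,T_\sym) = (\tr T)^2$, together with the identity \eqref{eqn:id2}, which yields $\tr\bigl(T\circ(\Vop T\lrcorner\varphi)\bigr) = -|\Vop T|^2$. Eliminating the scalar curvature via \eqref{eqn:scalar_torsion}, one finds $\tr\hat P_1 = \tfrac{4}{3}\Div\Vop T - \tfrac{1}{3}|\Vop T|^2 + |T|^2$ and $c = -\tfrac{1}{3}\Div\Vop T + \tfrac{1}{6}|\Vop T|^2 - \tfrac{1}{2}|T|^2$. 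Since $\tr\tilde P_1 = \tr\hat P_1 + \tfrac{7}{3}\bigl(|T|^2 - \tfrac{1}{3}|\Vop T|^2\bigr)$, it is then a short verification that $-\tfrac{1}{4}\tr\tilde P_1 + \tfrac{1}{3}\bigl(|T|^2 - \tfrac{1}{3}|\Vop T|^2\bigr) = c$, which is exactly the scalar identity encoding $P_1 = \tilde P_1 - \tfrac{1}{4}\tr\tilde P_1\,g$.

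For uniqueness I would exploit that adding $\lambda\tr\tilde P_1\,g$ alters only the pure-trace (multiple of $g$) component, so the trace-free parts of $P_1$ and of any admissible $\tilde P_1$ must coincide. Writing a special Ricci-like $\tilde P_1 = -\ric + a\mathcal L_{\Vop T}g + L$ with $L$ quadratic in the torsion, its only second-order (symbol-carrying) part is $-\ric + a\mathcal L_{\Vop T}g$, whereas $L$ and the quadratic terms of $\hat P_1$ carry no symbol. Since $P_1 = \hat P_1 + c\,g$ has trace-free second-order part equal to that of $-\ric - \tfrac{1}{3}\mathcal L_{\Vop T}g$, and the principal symbols of $\ric$ and $\mathcal L_{\Vop T}g$ are linearly independent (Subsection \ref{sec:diff_ops}), matching the trace-free second-order parts forces $a = -\tfrac{1}{3}$. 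Matching the remaining trace-free quadratic-in-torsion parts then forces $L$ to equal the quadratic part of $\hat P_1$ modulo a multiple of $g$, so that $\tilde P_1 = \hat P_1 + \nu g$ for some unknown scalar $\nu$ quadratic in the torsion.

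It then remains to pin down $\nu$ and $\lambda$ from the pure-trace part of $P_1 = \tilde P_1 + \lambda\tr\tilde P_1\,g$, which reads $c = \nu(1 + 7\lambda) + \lambda\,\tr\hat P_1$. Here is the crux: among the three scalars $c$, $\nu$ and $\tr\hat P_1$, only $c$ and $\tr\hat P_1$ contain the genuine second-order invariant $\Div\Vop T$, with coefficients $-\tfrac{1}{3}$ and $\tfrac{4}{3}$ respectively, while $\nu$ is purely quadratic in the torsion and so contributes none. Matching the $\Div\Vop T$ coefficient therefore isolates $\lambda = -\tfrac{1}{4}$ uniquely, independently of $\nu$; the leftover quadratic-in-torsion terms then determine $\nu = \tfrac{1}{3}\bigl(|T|^2 - \tfrac{1}{3}|\Vop T|^2\bigr)$ uniquely, recovering the stated $\tilde P_1$. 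The main obstacle is purely computational, namely the accurate evaluation of $\tr\hat P_1$ and $c$ and, above all, the clean separation of the single second-order term $\Div\Vop T$ from the quadratic torsion terms using \eqref{eqn:scalar_torsion} and \eqref{eqn:id2}; it is precisely this separation that makes both $\lambda$ and $\nu$, and hence the pair $(\tilde P_1,\lambda)$, uniquely determined.
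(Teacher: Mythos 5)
Your proposal is correct and follows essentially the same route as the paper: both arguments reduce to trace bookkeeping via $\tr P_1=(1+7\lambda)\tr\tilde P_1$ and pin down $\lambda=-\tfrac14$ by the requirement that a special Ricci-like $\tilde P_1$ carry no pure second-order scalar term (the paper isolates the $\scal$ coefficient, you equivalently isolate the $\Div\Vop T$ coefficient via \eqref{eqn:scalar_torsion}). The only organizational difference is that the paper first inverts the relation to write $\tilde P_1=P_1-\tfrac{\lambda}{7\lambda+1}\tr P_1\,g$, so uniqueness of $\tilde P_1$ given $\lambda$ is immediate, whereas you introduce the intermediate unknown $\nu$ and solve for $(\lambda,\nu)$ simultaneously; both computations check out.
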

\begin{proof}
Suppose that for some $\lambda\in\mathbb R$, $P_1$ decomposes as $P_1= \tilde P_1+\lambda\tr \tilde P_1 g$. Taking traces, we obtain that $\tr P_1 = (7\lambda+1) \tr \tilde P_1$, therefore
\begin{equation}\label{eqn:tildeP1_derivation}
\tilde P_1 = P_1 -\lambda \tr \tilde P_1 g = P_1 -\frac{\lambda}{7\lambda+1} \tr P_1 g,
\end{equation}
so $\tilde P_1$ is uniquely determined by $P_1$ and $\lambda$.

Recall that
$$P_1 =  -\ric -\frac{1}{3}\mathcal L_{\Vop T} g -\frac{2}{3} (T\circ (\Vop T\lrcorner \varphi))_\sym + \tr T T_\sym +\left(\frac{1}{6} \scal  -\frac{1}{6} (\tr T)^2 -\frac{1}{3} |T|^2 \right) g,$$
hence, by \eqref{eqn:scalar_torsion},
\begin{equation}\label{eqn:trP1}
\begin{aligned}
\tr P_1 &= -\scal - \frac{2}{3} \Div \Vop T +\frac{2}{3} |\Vop T|^2 + (\tr T)^2 +\frac{7}{6} \scal -\frac{7}{6} (\tr T)^2 -\frac{7}{3} |T|^2,\\
&=\frac{1}{2} \scal-\frac{1}{2}(\tr T)^2+\frac{1}{3} |\Vop T|^2 -2|T|^2.
\end{aligned}
\end{equation}
It follows that the second order terms of $\tilde P_1$ are given by
$$-\ric -\frac{1}{3}\mathcal L_{\Vop T} g +\left(\frac{1}{6}  -\frac{\lambda}{2(7\lambda+1)}\right)\scal.$$
Since $\tilde P_1$ is a special Ricci-like operator if and only if the coefficient of $\scal$ vanishes, it follows that $\lambda=-\frac{1}{4}$ is the only choice for which this happens.

A direct computation then using \eqref{eqn:tildeP1_derivation} shows that
\begin{equation}\label{eqn:tildeP1}
\tilde P_1 = -\ric -\frac{1}{3}\mathcal L_{\Vop T} g -\frac{2}{3} (T\circ (\Vop T\lrcorner \varphi))_\sym + \tr T T_\sym + \frac{1}{3} \left(|T|^2 -\frac{1}{3} |\Vop T|^2\right)g.
\end{equation}
\end{proof}
By Subsection \ref{subsection:diffeo_invariance}, we know that $\Lop(P_1,P_2)=0$, which by Proposition \ref{prop:tildeP1_decomp} immediately gives the following corollary.
\begin{corollary}\label{cor:tildeBianchi}
The operator $\tilde P(\varphi)=\tilde P_1(\varphi) \diamond \varphi +P_2(\varphi)\lrcorner \psi$ satisfies the Bianchi-type identity
$\tilde B(\tilde P_1,P_2)=0$, where
$$\tilde B(h,X) = \Lop \left(h-\frac{\tr h}{4} g,X\right)=\Div h -\frac{1}{4}\nabla \tr h +\frac{1}{2}\curl(X) -\frac{1}{2} X\lrcorner \Pop(T) - T(X).$$
\end{corollary}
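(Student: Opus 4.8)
The plan is to obtain the identity directly from two ingredients that are already in place: the diffeomorphism invariance of the $G_2$-Hilbert functional, and the algebraic decomposition of $P_1$ furnished by Proposition \ref{prop:tildeP1_decomp}. First I would recall from Subsection \ref{subsection:diffeo_invariance} that, since $\mathcal F$ is diffeomorphism invariant and its $\langle\cdot,\cdot\rangle_{L^2}$-gradient is $P=P_1\diamond\varphi+P_2\lrcorner\psi$ by Proposition \ref{firstvariationofF}, the pair $(P_1,P_2)$ lies in the kernel of the operator $\Lop$; that is, $\Lop(P_1,P_2)=0$.

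Next I would invoke Proposition \ref{prop:tildeP1_decomp}, which states that $P_1=\tilde P_1-\frac{\tr\tilde P_1}{4}g$. Substituting this into the vanishing identity above gives
$$\Lop\left(\tilde P_1-\frac{\tr\tilde P_1}{4}g,\,P_2\right)=0.$$
By the very definition $\tilde B(h,X)=\Lop\left(h-\frac{\tr h}{4}g,X\right)$, the left-hand side is precisely $\tilde B(\tilde P_1,P_2)$, and so the Bianchi-type identity $\tilde B(\tilde P_1,P_2)=0$ follows at once.

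It remains to confirm that the explicit component formula stated for $\tilde B$ agrees with this definition. Here I would compute $\Div\left(h-\frac{\tr h}{4}g\right)=\Div h-\frac{1}{4}\nabla\tr h$, using $\nabla_i(\tr h\, g_{ij})=\nabla_j\tr h$, while the remaining terms $\frac{1}{2}\curl X-\frac{1}{2}X\lrcorner\Pop(T)-T(X)$ of $\Lop$ involve only the $1$-form argument $X$ and are therefore untouched by the trace correction applied to the symmetric $2$-tensor slot. This reproduces
$$\tilde B(h,X)=\Div h-\frac{1}{4}\nabla\tr h+\frac{1}{2}\curl(X)-\frac{1}{2}X\lrcorner\Pop(T)-T(X).$$

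There is essentially no analytic obstacle in this argument: the corollary is a formal consequence of combining $\Lop(P_1,P_2)=0$ with the rearrangement $P_1=\tilde P_1-\frac{\tr\tilde P_1}{4}g$. The only point requiring minor care is tracking the trace term under the divergence, and verifying that the $X$-dependent part of $\Lop$ passes unchanged through the definition of $\tilde B$; once these are checked, the identity is immediate.
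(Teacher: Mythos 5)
Your argument is correct and coincides with the paper's own proof: the corollary is stated there as an immediate consequence of $\Lop(P_1,P_2)=0$ (diffeomorphism invariance, Subsection \ref{subsection:diffeo_invariance}) combined with the decomposition $P_1=\tilde P_1-\frac{\tr\tilde P_1}{4}g$ from Proposition \ref{prop:tildeP1_decomp}. Your additional check that $\Div\left(h-\frac{\tr h}{4}g\right)=\Div h-\frac{1}{4}\nabla\tr h$ and that the $X$-slot of $\Lop$ is unaffected is a harmless verification of the definition of $\tilde B$.
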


\begin{remark}
By Proposition \ref{prop:special_rl_symbol}, the principal symbol of the operator $D_\varphi P$
is given by 
\begin{equation*}
\begin{aligned}
&\sigma_\xi (D_\varphi P)(h\diamond \varphi+X\lrcorner\psi)=\\
&=(|\xi|^2 h + (\xi \otimes  B_\xi(h,X))_\sym)\diamond\varphi + (|\xi|^2 X +\Vop (\xi\otimes  B_\xi(h,X)),
\end{aligned}
\end{equation*}
where $B_\xi: \mathcal S^2 \times \Omega^1 \rightarrow \Omega^1$ is the bi-linear map defined by
\begin{align*}
B_\xi(h,X)_k&=\frac{2}{3}h(\xi) -\frac{1}{6} \xi \tr h +\frac{1}{3} \Vop( \xi \otimes  X)=\frac{2}{3}\left( h(\xi)  - \frac{1}{4} \xi \tr h+\frac{1}{2} \Vop(\xi \otimes X)\right).
\end{align*}
Observe that $\tilde B_\xi (h,X) =h(\xi)  - \frac{1}{4} \xi \tr h+\frac{1}{2} \Vop(\xi \otimes X)$
is the principal symbol of the linear differential operator $\tilde B$ defined in Corollary \ref{cor:tildeBianchi}. This strongly resembles the relation of the Bianchi operator $\beta_g$ with the linearlization $-\frac{1}{2} D\ric$, since
$$-\frac{1}{2} D_g\ric (h) = \Delta_g h +\mathcal L_{\beta_g(h)} g + lots$$
\end{remark}

\subsection{Alternative expressions of the $G_2$-Hilbert functional}

By further applying \eqref{eqn:scalar_torsion} into \eqref{eqn:trP1} we obtain that
\begin{equation}\label{eqn:trP1_equiv}
\begin{aligned}
\tr P_1 &= \frac{1}{2}(\underbrace{\scal - (\tr T)^2}_{\eqref{eqn:scalar_torsion}}) +\frac{1}{3} |\Vop T|^2 -2|T|^2\\
&=-\Div \Vop T+\frac{1}{2}\left(|\Vop T|^2 -|T|^2\right) + \frac{1}{3}|\Vop T|^2 - 2|T|^2\\
&=-\Div \Vop T+\frac{5}{6} |\Vop T|^2 -\frac{5}{2} |T|^2\\
&=-\Div \Vop T+5\left(-\frac{1}{2}|T|^2 +\frac{1}{6} |\Vop T|^2\right).
\end{aligned}
\end{equation}
Moreover,
\begin{equation}\label{eqn:tr_hat_P1}
\begin{aligned}
\tr \hat P_1 &= \underbrace{(\tr T)^2-\scal}_{\eqref{eqn:scalar_torsion}} -\frac{2}{3}\Div \Vop T +\frac{2}{3} |\Vop T|^2,\\
&=\frac{4}{3} \Div \Vop T -\frac{1}{3} \underbrace{|\Vop T|^2}_{\eqref{eqn:scalar_torsion}}+|T|^2\\
&=\frac{4}{3} \Div \Vop T -\frac{1}{3} \left(\scal -(\tr T)^2 +2\Div \Vop T +|T|^2\right) + |T|^2\\
&=\frac{2}{3} \Div \Vop T -2 \left(\frac{1}{6}\scal -\frac{1}{6}(\tr T)^2 - \frac{1}{3} |T|^2\right). 
\end{aligned}
\end{equation}
and similarly, by \eqref{eqn:tildeP1},
\begin{equation*}
\tr \tilde P_1 = \frac{4}{3}\Div \Vop T -\frac{20}{3} \left(-\frac{1}{2}|T|^2 +\frac{1}{6} |\Vop T|^2\right).
\end{equation*}
Therefore, by \eqref{eqn:g2_hilbert} and \eqref{eqn:F_second_form}, we obtain 
\begin{equation*}
\mathcal F=-\frac{1}{2}\int_M \tr \hat P_1 d\mu =-\frac{3}{20} \int_{M} \tr \tilde P_1 d\mu=\frac{1}{5}\int_M  \tr P_1 d\mu.
\end{equation*}

\subsection{Flows of $G_2$-structures} \label{sec:flows} In the previous subsections we saw that the $G_2$-Hilbert functional, and the associated operators 
\begin{align*}
\hat P(\varphi)&= \left(-\ric -\frac{1}{3}\mathcal L_{\Vop T} g -\frac{2}{3} (T\circ (\Vop T \lrcorner \varphi))_\sym +\tr T T_\sym\right)\diamond \varphi \\
&+ \left(\frac{2}{3} \Div T +\frac{1}{3} \nabla \tr T +\frac{1}{3} \tr T \Vop T\right)\lrcorner \psi,\\
\tilde P(\varphi)&= \left(-\ric -\frac{1}{3}\mathcal L_{\Vop T} g -\frac{2}{3} (T\circ (\Vop T \lrcorner \varphi))_\sym +\tr T T_\sym +\frac{1}{3} \left(|T|^2 -\frac{1}{3} |\Vop T|^2\right)g\right)\diamond \varphi \\
&+ \left(\frac{2}{3} \Div T +\frac{1}{3} \nabla \tr T +\frac{1}{3} \tr T \Vop T\right)\lrcorner \psi,
\end{align*}
arise naturally from a collection of principles which in the context of Riemannian Geometry uniquely identify the Ricci tensor. Thus, from this point of view, natural analogues of the Ricci flow 
$$\frac{\partial g}{\partial t} =-2\ric_g$$
in $G_2$-geometry would be the flows
\begin{align}
\frac{\partial \varphi}{\partial t} = \hat P(\varphi),\label{eqn:hat_flow}\\
\frac{\partial \varphi}{\partial t} = \tilde P(\varphi).\label{eqn:tilde_flow}
\end{align}
Since by \eqref{eqn:LieVopT},
\begin{equation}
\mathcal L_{\Vop T} \varphi = \frac{1}{2}\mathcal L_{\Vop T} g \diamond \varphi +\left(\frac{1}{2}\left(\Div T- \nabla \tr T +T^t(\Vop T)\right) \right) \lrcorner\psi,
\end{equation}
the flows \eqref{eqn:hat_flow} and \eqref{eqn:tilde_flow} are equivalent, modulo diffeomorphisms, to the the flows induced by the operators $\hat P+\frac{2}{3}\mathcal L_{\Vop T} \varphi$ and $\tilde P+\frac{2}{3}\mathcal L_{\Vop T} \varphi$, namely
\begin{equation}
\begin{aligned}
\frac{\partial \varphi}{\partial t} &= \left(-\ric -\frac{2}{3} (T\circ (\Vop T \lrcorner \varphi))_\sym + \tr T T_\sym \right) \diamond \varphi +\left( \Div T +\frac{1}{3} \tr T \Vop T +\frac{1}{3} T^t (\Vop T)\right)\lrcorner \psi,\label{eqn:hat_flow_2}
\end{aligned}
\end{equation}
and
\begin{equation}
\begin{aligned}
\frac{\partial \varphi}{\partial t} &= \left(-\ric -\frac{2}{3} (T\circ (\Vop T \lrcorner \varphi))_\sym + \tr T T_\sym +\frac{1}{3} \left(|T|^2 -\frac{1}{3} |\Vop T|^2\right)g\right) \diamond \varphi\\ \label{eqn:tilde_flow_2}
& +\left( \Div T +\frac{1}{3} \tr T \Vop T +\frac{1}{3} T^t (\Vop T)\right)\lrcorner \psi.
\end{aligned}
\end{equation}
respectively. Note that flows \eqref{eqn:hat_flow_2} and \eqref{eqn:tilde_flow_2}, neglecting the lower terms, are a coupling of the Ricci flow with the isometric flow \eqref{eqn:isometric_flow}.

Clearly, torsion-free $G_2$-structures are fixed points of all the flows described above. On the other hand, given a nearly-$G_2$-structure $\varphi_0$ satisfying $T_{\varphi_0}=c_0 g_0$, direct computations using that $\varphi = \lambda^3 \varphi_0$ and $g=\lambda^2 g_0$ satisfy
\begin{align*}
\ric_g&= 6c_0^2 g_0,\\
T_\varphi &= c_0 \lambda g_0,\\
\Vop_\varphi T_\varphi&=0,\\
\tr_g T_\varphi&= 7\lambda^{-1} c_0,\\
|T_\varphi|^2 g &= 7c_0^2 g_0,
\end{align*}
shows that for $t\geq 0$
\begin{align*}
\varphi(t)&= (c_0^2 t+1)^3 \varphi_0,\\
\varphi(t)&=\left(\frac{10}{3} c_0^2 t+1\right)^3 \varphi_0,
\end{align*}
are solutions to \eqref{eqn:hat_flow} and \eqref{eqn:tilde_flow} respectively, as well as of \eqref{eqn:hat_flow_2} and \eqref{eqn:tilde_flow_2} since here $\Vop_\varphi T_
\varphi=0$, starting at $\varphi(0)=\varphi_0$. 

Thus, nearly-$G_2$-structure are also fixed points, modulo scaling, of the flows described above. However, unlike the Ricci flow, they are \textit{expanding} solutions of these flows, which exist for all time.

\section{The conformal Killing operator on $G_2$-structures}

\begin{definition}
The $G_2$-conformal  Killing operator $\Kop :\Omega^1 \rightarrow \mathcal S_0^2 \times \Omega^1 $, where $\mathcal S_0^2$ denotes the traceless symmetric $2$-tensors,  is defined as
\begin{equation} \label{conformalkilling}
\begin{aligned}
\Kop (Y) &= \left( -\frac{1}{2} \mathcal L_Y g + \frac{1}{7}\Div Y g , \frac{1}{2} \curl Y - Y \lrcorner T\right)\\
&=\Lop^*(Y) + \left(\frac{1}{7} \Div Y g,0\right).
\end{aligned}
\end{equation}

\end{definition}
Observe that if $V\in \ker \Kop$ then, by \eqref{eqn:Lie_phi},
$$\mathcal L_V \varphi = \left(\frac{1}{7} \Div V g\right)\diamond \varphi,$$
so the flow of $V$ generates conformal transformations of the $G_2$-structure.

The $G_2$-conformal Killing operator allows us to refine the decomposition of the tangent space of $G_2$-structures of Proposition \ref{Firstsplitting} in the following proposition.
 
\begin{proposition} \label{decompositionofG_2structures}
Given a $G_2$-structure $\varphi$ on a $7$-manifold $M$, the tangent space $T_\varphi \Omega^3_+$ of the space of $G_2$-structures $\Omega^3_+$, equipped with $\langle \cdot,\cdot\rangle_{L^2}$, is isometric to 
$$\{(fg,0), f\in C^\infty(M)\} \oplus \mathrm{Im}\Kop \oplus (\ker \Lop \cap \ker \tr),$$
where the direct sum is $\langle \cdot,\cdot\rangle_{L^2}$-orthogonal.

The $3$-forms corresponding to $\ker L\cap \ker \tr$ will be called $G_2$-transverse traceless infinitesimal deformations of the $G_2$-structure $\varphi$.
\end{proposition}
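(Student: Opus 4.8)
The plan is to mimic the proof of Proposition \ref{Firstsplitting}, replacing the operator $\Lop^*$ by the $G_2$-conformal Killing operator $\Kop$. The key observation is that, since $\Kop(Y)=\Lop^*(Y)+(\tfrac17\Div Y\,g,0)$, a short integration by parts gives the $\langle\cdot,\cdot\rangle_{L^2}$-adjoint $\Kop^*:\mathcal S^2\times\Omega^1\to\Omega^1$ in the form $\Kop^*(h,X)=\Lop(h,X)-\tfrac17\nabla\tr h$. I would therefore aim to establish the orthogonal Hodge-type splitting $\mathcal S^2\times\Omega^1=\mathrm{Im}\,\Kop\oplus\ker\Kop^*$ and then identify $\ker\Kop^*$ with $\{(fg,0)\}\oplus(\ker\Lop\cap\ker\tr)$.

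First I would show that $\Kop^*\Kop$ is elliptic, so that the splitting $\mathcal S^2\times\Omega^1=\mathrm{Im}\,\Kop\oplus\ker\Kop^*$ follows from Fredholm theory by exactly the argument already used for $\Lop\circ\Lop^*$: given $(h,X)$, one solves $\Kop^*\Kop(V)=\Kop^*(h,X)$, which is solvable because $\Kop^*(h,X)$ is automatically orthogonal to $\ker(\Kop^*\Kop)=\ker\Kop$, and then $(h,X)=\big((h,X)-\Kop V\big)+\Kop V$ with the first summand in $\ker\Kop^*$. Ellipticity reduces to checking injectivity of the principal symbol of $\Kop$ for $\xi\neq0$; in fact its first (symmetric trace-free) component $-\tfrac12(\xi\otimes Y+Y\otimes\xi)+\tfrac17\langle\xi,Y\rangle g$ already suffices: contracting with $\xi\otimes\xi$ yields $\tfrac67|\xi|^2\langle\xi,Y\rangle=0$, hence $\langle\xi,Y\rangle=0$, and contracting the resulting identity $\xi\otimes Y+Y\otimes\xi=0$ with $\xi$ forces $Y=0$. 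Here the coefficient $\tfrac17\neq1$, coming precisely from the conformal correction, is what makes the symbol injective.

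Next I would identify the kernel. Using $\Div(fg)=\nabla f$ one checks $\Kop^*(fg,0)=\nabla f-\tfrac17\nabla\tr(fg)=\nabla f-\nabla f=0$, so $\{(fg,0)\}\subseteq\ker\Kop^*$; and for $(h,X)\in\ker\Lop\cap\ker\tr$ both terms of $\Kop^*(h,X)=\Lop(h,X)-\tfrac17\nabla\tr h$ vanish, so $\ker\Lop\cap\ker\tr\subseteq\ker\Kop^*$. For the reverse inclusion, given $(h,X)\in\ker\Kop^*$ I would set $f=\tfrac17\tr h$, so that $h-fg$ is trace-free, and compute $\Lop(h-fg,X)=\Lop(h,X)-\nabla f=\tfrac17\nabla\tr h-\nabla f=0$; thus $(h,X)=(fg,0)+(h-fg,X)$ with $(h-fg,X)\in\ker\Lop\cap\ker\tr$. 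This proves $\ker\Kop^*=\{(fg,0)\}\oplus(\ker\Lop\cap\ker\tr)$, the two summands being $\langle\cdot,\cdot\rangle_{L^2}$-orthogonal since one is pure-trace and the other trace-free.

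Finally I would assemble the threefold orthogonal decomposition. Since $\mathrm{Im}\,\Kop\subseteq\mathcal S_0^2\times\Omega^1$ (the first component of $\Kop(Y)$ is trace-free by construction), it is orthogonal to $\{(fg,0)\}$; it is orthogonal to $\ker\Lop\cap\ker\tr$ because that space lies in $\ker\Kop^*$; and the orthogonality of the last two summands was just noted. Combining with the Hodge splitting above yields exactly $T_\varphi\Omega^3_+\cong\{(fg,0)\}\oplus\mathrm{Im}\,\Kop\oplus(\ker\Lop\cap\ker\tr)$. I expect the main obstacle to be the symbol computation establishing ellipticity of $\Kop^*\Kop$, equivalently injectivity of $\sigma_\xi\Kop$, together with keeping careful track of the adjoint formula $\Kop^*(h,X)=\Lop(h,X)-\tfrac17\nabla\tr h$, on which both the kernel identification and the orthogonality of $\mathrm{Im}\,\Kop$ to the kernel rest; the remainder is a routine adaptation of Proposition \ref{Firstsplitting}.
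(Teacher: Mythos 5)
Your proof is correct and follows essentially the same route as the paper: since the $2$-tensor component of $\Kop$ is trace-free, your operator $\Kop^*\circ\Kop$ coincides with the paper's $\Lop\circ\Kop$, so your ellipticity/Fredholm argument and the paper's are the same computation, merely with the symbol checked on $\Kop$ rather than on the composition. The only organizational difference is that the paper splits off the pure-trace part $h=h_0+\frac{\tr h}{7}g$ first and then decomposes $(h_0,X)$, whereas you first establish $\mathrm{Im}\,\Kop\oplus\ker\Kop^*$ and then identify $\ker\Kop^*$ with the conformal and transverse-traceless summands; both yield the identical three-fold orthogonal decomposition.
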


\begin{proof}
We compute, omitting lower order terms,
\begin{align*}
\Lop\circ \Kop(Y)_k &=\Lop\left(-\frac{1}{2}\mathcal L_Y g+\frac{1}{7} \Div Y g, \frac{1}{2} \curl Y -Y\lrcorner T\right)_k\\
&=\Div \left(-\frac{1}{2}\mathcal L_Y g+\frac{1}{7} \Div Y g \right) + \frac{1}{2} \curl\left(\frac{1}{2} \curl Y -Y\lrcorner T \right)_k\\
&=-\frac{1}{2} \nabla_i (\nabla_i Y_k +\nabla_k Y_i) +\frac{1}{7} \nabla_k \Div Y + \frac{1}{4} \curl \curl Y_k + lots\\
&=-\frac{1}{2} \Delta Y_k -\frac{5}{14} \nabla_k \Div Y +\frac{1}{4} \nabla_a \nabla_p Y_q \varphi_{pqb} \varphi_{kab} + lots\\
&=-\frac{1}{2} \Delta Y_k -\frac{5}{14} \nabla_k \Div Y + \frac{1}{4} \nabla_a \nabla_p Y_q (g_{pk} g_{qa} - g_{pa} g_{qk} -\psi_{pqka}) + lots \\
&=-\frac{1}{2} \Delta Y_k -\frac{5}{14} \nabla_k \Div Y +\frac{1}{4}\nabla_k \Div Y- \frac{1}{4} \Delta Y_k - \frac{1}{4}\nabla_a\nabla_p Y_q \psi_{pqka} +lots\\
&=-\frac{3}{4} \Delta Y_k -\frac{3}{28} \nabla_k \Div Y -\frac{1}{8} R_{apmq}Y_m \psi_{pqka} +lots\\
&=-\frac{3}{4} \Delta Y_k -\frac{3}{28} \nabla_k \Div Y +lots
\end{align*}
Therefore, the principal symbol of $\Lop\circ\Kop$ is given by
\begin{equation*}
\sigma_\xi (\Lop\circ\Kop) (Y)_k= -\frac{3}{4} |\xi|^2 Y_k -\frac{3}{28} \xi_k \langle \xi,Y\rangle.
\end{equation*}
As in the proof of Proposition \ref{Firstsplitting}, we conclude that $\Lop\circ \Kop$ is an elliptic operator.

Next, we observe that $\Lop\circ \Kop$ is self-adjoint.  Given $X,Y\in\Omega^1$, we compute, using \eqref{conformalkilling} and that the $2$-tensor component of $\Kop$ is always trace-free, that
\begin{equation}\label{eqn:LK_selfadjoint}
\begin{aligned}
\int_M \langle \Lop\circ \Kop(X),Y\rangle d\mu &= \int_M \langle \Kop(X), \Lop^*(Y) \rangle d\mu\\
&=\int_M \langle \Kop(X), \Lop^*(Y) +\left( \frac{1}{7} \Div Y,0\right)\rangle d\mu\\
&=\int_M \langle \Kop(X), \Kop(Y) \rangle d\mu,\\
&=\int_M \langle \Lop^*(Y) ,\Kop(Y)\rangle d\mu\\
&=\int_M \langle X, \Lop\circ \Kop(Y) \rangle d\mu.
\end{aligned}
\end{equation}
Moreover, setting $X=Y\in \ker \Lop\circ \Kop$ in \eqref{eqn:LK_selfadjoint}, we also obtain that $\ker \Lop\circ \Kop=\ker \Kop$.

To finish the proof of the proposition, take any $ (h,X) \in \mathcal S^2\times \Omega^1$. We first uniquely decompose $h=h_0+h_1$, where $\tr h_0 =0 $ and $h_1= \frac{\tr h}{7} g$. 

Therefore, $(h,X) = (h_0,X) + (h_1,0)$. It suffices to prove that the equation $\Lop\circ \Kop(V) = \Lop(h_0,X)$ has a unique solution $V\in \Omega^1(M)$, since then
$$(h_0,X) = \underbrace{(h_0,X) - \Kop(V)}_{\ker \Lop \cap \ker \tr} + \underbrace{\Kop(V)}_{\mathrm{Im}\Kop}$$
which proves that $\mathcal S^2_0 \times \Omega^1\subset \mathrm{Im} \Kop\oplus (\ker \Lop\cap \ker \tr)$.

By Fredholm theory, this equation has a unique solution if and only if $\Lop(h_0,X)$ is $L^2$-orthogonal to $ \ker \Lop\circ \Kop= \ker \Kop$. This is indeed the case, since if $V\in \ker \Kop$ then
\begin{align*}
\int_M \langle \Lop(h_0,X),V\rangle d\mu &= \int_M \langle (h_0,X), \Lop^*(V)\rangle d\mu\\
&=\int_M \langle (h_0,X),\Kop(V)\rangle d\mu =0
\end{align*}
Therefore, the required decomposition is
\begin{align*}
(h,X) = (h_1,0) +  \left((h_0 ,X) -  \Kop(V)\right) + \Kop(V).
\end{align*}
\end{proof}

\begin{corollary}\label{decompositionwithLiederivative}
The following decomposition holds 
\begin{equation*}
T_{\varphi}\Omega^3_{+}(M)  \approx \left( \{ (fg,0), f\in C^\infty(M)\} + \mathrm{Im} \Lop^* \right)\oplus (\ker \Lop\cap \ker \tr)
\end{equation*}
\end{corollary}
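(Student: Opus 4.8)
The plan is to deduce the corollary directly from Proposition \ref{decompositionofG_2structures} by rewriting the first two summands of that decomposition. The only ingredient needed is the defining relation from \eqref{conformalkilling},
$$\Kop(Y) = \Lop^*(Y) + \left(\tfrac{1}{7}\Div Y\, g,\, 0\right),$$
which exhibits $\Kop$ and $\Lop^*$ as differing by a term valued in $\{(fg,0)\}$.

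First I would establish the equality of subspaces of $\mathcal S^2\times\Omega^1$
$$\{(fg,0): f\in C^\infty(M)\} + \mathrm{Im}\,\Kop = \{(fg,0): f\in C^\infty(M)\} + \mathrm{Im}\,\Lop^*.$$
For the inclusion ``$\subseteq$'', the relation above shows that for any $1$-form $Y$ the element $\Kop(Y)$ lies in $\mathrm{Im}\,\Lop^* + \{(fg,0)\}$, since $(\tfrac{1}{7}\Div Y\, g,0)$ is of the form $(fg,0)$. The reverse inclusion follows symmetrically from $\Lop^*(Y) = \Kop(Y) - (\tfrac{1}{7}\Div Y\, g,0)$. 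Hence the two sums agree as subspaces.

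Substituting this identity into the conclusion of Proposition \ref{decompositionofG_2structures} gives the stated decomposition at the level of subspaces. It then remains only to record that the summand $\{(fg,0)\} + \mathrm{Im}\,\Lop^*$ is $\langle\cdot,\cdot\rangle_{L^2}$-orthogonal to $\ker\Lop\cap\ker\tr$. This is inherited from the proposition, since this summand coincides with $\{(fg,0)\} + \mathrm{Im}\,\Kop$, which the proposition asserts is orthogonal to $\ker\Lop\cap\ker\tr$. Alternatively one checks it directly: for $(h,X)\in\ker\Lop\cap\ker\tr$ we have $\langle(fg,0),(h,X)\rangle_{L^2}=\int_M f\,\tr h\, d\mu=0$ because $\tr h=0$, and $\langle \Lop^*(Y),(h,X)\rangle_{L^2}=\langle Y,\Lop(h,X)\rangle_{L^2}=0$ because $(h,X)\in\ker\Lop$. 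Since the underlying graded vector space is merely regrouped, the $\langle\cdot,\cdot\rangle_{L^2}$-isometry is inherited from Proposition \ref{decompositionofG_2structures}.

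There is no substantial obstacle: the entire content lies in Proposition \ref{decompositionofG_2structures}, and the corollary is a formal rewriting using \eqref{conformalkilling}. The one point requiring care is that the inner sum $\{(fg,0)\} + \mathrm{Im}\,\Lop^*$ need not be a direct sum, as the conformal directions produced by $\Lop^*$ may overlap with $\{(fg,0)\}$; accordingly I would write that part of the decomposition with a plain $+$, reserving $\oplus$ for the genuinely orthogonal splitting off of $\ker\Lop\cap\ker\tr$, exactly as in the statement.
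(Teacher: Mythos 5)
Your argument is correct and coincides with the derivation the paper intends: the corollary is stated without proof precisely because it follows from Proposition \ref{decompositionofG_2structures} by the substitution $\Kop(Y)=\Lop^*(Y)+\bigl(\tfrac{1}{7}\Div Y\, g,0\bigr)$, which is exactly what you carry out. Your direct verification of the $\langle\cdot,\cdot\rangle_{L^2}$-orthogonality of $\{(fg,0)\}+\mathrm{Im}\,\Lop^*$ against $\ker\Lop\cap\ker\tr$, and your remark on why the inner sum is written with $+$ rather than $\oplus$, are both accurate and consistent with the statement.
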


\section{The second variation of $\tilde {\mathcal F}$} \label{sec:2var}

In this section we compute the second variation of the functional $ \mathcal{\tilde{F} } $ at static $G_2$-structures that is either torsion free or nearly-$G_2$.   Our main result is the following theorem.

\begin{theorem}\label{thm:second_variation}
Let $M$ be a $7$-manifold admitting a $G_2$-structure $\varphi$ which induces the Riemannian metric $g$ on $M$ and the dual $4$-form $\psi$, and suppose that $\varphi$ is either torsion free or nearly-$G_2$, namely its torsion satisfies $T=c g$, for some constant $c$. Consider a $2$-parameter family $\{\varphi_{t,s}\}_{t,s\in(-\epsilon,\epsilon)}$ of $G_2$-structures, such that $\varphi_{(0,0)}=\varphi$, the induced metrics $g_{t,s}$ have unit volume and
\begin{align*}
\left.\frac{\partial}{\partial t}\right|_{t=s=0} \varphi_{t,s}&= h\diamond \varphi +X\lrcorner\psi,\\
\left.\frac{\partial}{\partial s}\right|_{t=s=0} \varphi_{t,s}&= w\diamond\varphi + Y\lrcorner \psi,
\end{align*}
for some symmetric $2$-tensors $h,w$ and $1$-forms $X,Y$ on $M$. Then
\begin{equation*}
\begin{aligned}
D^2_\varphi \tilde{\mathcal F}((h,X), (w,Y)):=\left. \frac{\partial^2}{\partial t\partial s}\right|_{t=s=0} \tilde{\mathcal F}(\varphi_{t,s}) &= \int_M \langle K_1(h,X),w\rangle + \langle K_2(h,X),Y\rangle d\mu_g,
\end{aligned}
\end{equation*}
where, denoting by $\Delta_L h_{ij} =\Delta h_{ij} +2R_{aijb} h_{ab} - R_{ia}h_{aj} - R_{ja} h_{ai}=\Delta h_{ij} +2R_{aijb} h_{ab} - 12c^2h_{ij} $ the Lichnerowicz Laplacian, 
\begin{align*}
K_1(h,X)&=\Delta_L h -\frac{2}{3}\mathcal L_{\tilde B(h,X)} g   +\frac{1}{3}\left(- \Delta \tr h +\Div(\Div(h)) \right) g
\\
&+\frac{5c}{6}\mathcal L_X g + 7c \curl(h)_\sym-2c\Div(X) g +5c^2 h,\\
K_2(h,X)&=\Delta X +\frac{2}{3} \curl(\tilde B(h,X)) - 3 c\Div h +2c \nabla \tr h +\frac{11c}{3} \curl(X) +12 c^2 X.
\end{align*}
Moreover, $\varphi$ is a saddle point of $\tilde{\mathcal F}$ and it is a local minimum of the restriction of $\tilde{\mathcal F}$ in the conformal class $[\varphi]=\{f^3\varphi, f\in C^\infty(M), f>0\}$ of $\varphi$.
\end{theorem}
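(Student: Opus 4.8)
The plan is to first derive the second variation formula and then read the saddle and conformal-minimum behaviour off the sign of the resulting quadratic form, using the $\langle\cdot,\cdot\rangle_{L^2}$-orthogonal splitting of Corollary \ref{decompositionwithLiederivative}.

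To obtain $K_1,K_2$ I would differentiate the first variation of $\tilde{\mathcal F}$ from Proposition \ref{prop:norm_F_variation} in the second parameter $s$ and evaluate at the critical structure $\varphi$, where $T=cg$. Because $\varphi$ is static (Proposition \ref{criticalpointstheorem}), the first-order operators $P_1-\tfrac57\vol(g)^{-1}\mathcal F g$ and $P_2$ vanish, so every contribution coming from the $s$-acceleration of the family multiplies a vanishing factor and drops out; hence $D^2_\varphi\tilde{\mathcal F}$ is well defined and depends only on $(h,X)$ and $(w,Y)$. It then suffices to linearize $P_1,P_2$ and the normalization factor in the direction $(w,Y)$ and integrate by parts to transfer the derivatives onto $(h,X)$, using \eqref{eqn:metric_evol}, \eqref{eqn:torsion_variation} together with the simplifications $\Vop T=0$, $\nabla T=0$, $\ric=6c^2 g$, $\scal=42c^2$. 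The linearization of $-\ric$ produces the Lichnerowicz Laplacian $\Delta_L h$ together with the Bianchi term $-\tfrac23\mathcal L_{\tilde B(h,X)}g$ (the operator $\tilde B$ of Corollary \ref{cor:tildeBianchi} entering the linearization of $-\ric$ exactly as $\beta_g$ does in the Riemannian case), while the lower-order torsion terms of $P_1,P_2$ linearized at $T=cg$ give the explicit $c$- and $c^2$-terms and the $\curl(h)_\sym$, $\mathcal L_X g$, $\Div X$ contributions. This step is lengthy but mechanical; the only care is in commuting covariant derivatives via the $G_2$-Bianchi identity \eqref{eqn:g2_bianchi} and the second Bianchi identity, and in collecting divergence terms.

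Next I would invoke the orthogonal decomposition $T_\varphi\Omega^3_+\approx(\{(fg,0)\}+\mathrm{Im}\,\Lop^*)\oplus(\ker\Lop\cap\ker\tr)$ of Corollary \ref{decompositionwithLiederivative}. Diffeomorphism invariance makes $D^2_\varphi\tilde{\mathcal F}$ vanish identically on $\mathrm{Im}\,\Lop^*$, and scale invariance of the volume-normalized functional makes it vanish on the constant conformal mode, so the sign of the Hessian is governed by its restriction to the conformal directions $(fg,0)$ with $\int_M f\,d\mu=0$ and to the $G_2$-transverse-traceless block $\ker\Lop\cap\ker\tr$, which are mutually orthogonal. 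On the conformal block I set $h=w=fg$, $X=Y=0$; here $\tilde B(fg,0)=\Div(fg)-\tfrac14\nabla\tr(fg)=-\tfrac34\nabla f$ (the $X$-dependent and torsion terms of $\tilde B$ drop out), so $-\tfrac23\mathcal L_{\tilde B(fg,0)}g=\mathrm{Hess}\,f$, and $\curl(fg)_\sym=0$ since $\curl(fg)$ is skew. Substituting into $K_1$, pairing with $fg$ and integrating by parts collapses the conformal second variation to a form $a\int_M|\nabla f|^2\,d\mu+b\,c^2\int_M f^2\,d\mu$ with $a>0$. For torsion-free $\varphi$ ($c=0$) this is manifestly positive for non-constant $f$; in the nearly-$G_2$ case I would verify that the collected constant satisfies $b\ge 0$ directly, or, should it be negative, absorb it using the Lichnerowicz--Obata estimate $\lambda_1\ge\tfrac{7}{6}\min\ric=7c^2$ for the Einstein metric $\ric=6c^2 g$. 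Either way the restriction of $\tilde{\mathcal F}$ to the conformal class $[\varphi]$ is a local minimum.

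Finally, to exhibit a negative direction and conclude the saddle property I would test on $\ker\Lop\cap\ker\tr$. Taking $X=0$ and $h$ an ordinary transverse-traceless tensor ($\tr h=0$, $\Div h=0$) gives $(h,0)\in\ker\Lop\cap\ker\tr$ with $\tilde B(h,0)=\Lop(h,0)=0$, so $K_1(h,0)=\Delta_L h+7c\,\curl(h)_\sym+5c^2 h$ and the quadratic form becomes $\int_M\langle\Delta_L h,h\rangle\,d\mu$ plus terms of first order in $\nabla h$, that is $-\int_M|\nabla h|^2\,d\mu+\textrm{lower order}$. Since $\ker\Lop\cap\ker\tr$ is infinite-dimensional and its principal part is the negative Dirichlet form, a sufficiently high-frequency $h$ makes this strictly negative; being $L^2$-orthogonal to the conformal and diffeomorphism directions, it coexists with the positive conformal directions and forces $\varphi$ to be a saddle. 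I expect the main obstacle to lie not in this robust high-frequency argument but in the sign bookkeeping of the conformal computation, namely pinning down the constant $b$ and, in the nearly-$G_2$ case, matching it against the eigenvalue bound so that positivity genuinely holds.
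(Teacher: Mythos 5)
Your proposal is correct and follows essentially the same route as the paper: differentiate the first variation at the static structure so that acceleration terms drop, linearize $P_1,P_2$ and the normalization using $T=cg$, and then analyze the Hessian on the conformal block (where the paper obtains the manifestly positive operator $f\mapsto -6\Delta f+35c^2f$, so your Lichnerowicz--Obata fallback is not needed) and on the $G_2$-transverse-traceless block. Your high-frequency test on ordinary transverse-traceless tensors with $X=0$ is a slightly more elementary version of the paper's argument, which instead computes the full projected operator $\mathcal B$ on $\ker\Lop\cap\ker\tr$ and shows its spectrum tends to $-\infty$; both yield the saddle conclusion.
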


\subsection{Variation of the second order operators associated to $\tilde{\mathcal F}$}

\begin{proposition}\label{prop:variation_P}
Let $M$ be a $7$-manifold with a $G_2$-structure $\varphi$ inducing the Riemannian metric $g$, and let $\psi=*_g \varphi$. Let $\{\varphi_t\}_{t\in (-\varepsilon,\varepsilon)}$ be a smooth $1$-parameter family of $G_2$-structures inducing the Riemannian metric $g_t$ on $M$ such that $\varphi_0=\varphi$ and
    $$\left. \frac{d}{dt}\right|_{t=0} \varphi_t = h\diamond \varphi + X\lrcorner \psi$$
    for some symmetric $2$-tensor $h$ and $1$-form $X$ on $M$. Then, if $T_\varphi=c g$,

\begin{align*}
\left.\frac{\partial}{\partial t}\right|_{t=0}\hat P_1 &= \Delta_L h - \frac{2}{3} \mathcal L_{\tilde B(h,X)} g + \frac{5c}{6} \mathcal L_X g+c(\Div(X) - c\tr h) g + 7c^2 h +7 c \curl(h)_\sym, \\
\left.\frac{\partial}{\partial t}\right|_{t=0} \tilde P_1&=\Delta_L h -\frac{2}{3}\mathcal L_{\tilde B(h,X)} g+\frac{5c}{6}\mathcal L_X g +\frac{35 c^2}{3} h + 7c \curl(h)_\sym+\frac{5c}{3} \left( \Div(X) - c\tr h \right)g,\\
\left.\frac{\partial}{\partial t}\right|_{t=0} P_1 &=\Delta_L h -\frac{2}{3}\mathcal L_{\tilde B(h,X)} g   +\frac{1}{3}\left( -\Delta \tr h +\Div(\Div(h)) \right) g
\\
&+\frac{5c}{6}\mathcal L_X g + 7c \curl(h)_\sym-2c\Div(X) g\\
\left.\frac{\partial}{\partial t}\right|_{t=0} P_2 &=\Delta X +\frac{2}{3} \curl(\tilde B(h,X)) - 3c \Div h +2c \nabla \tr h +\frac{11c}{3} \curl(X) +12 c^2 X.
\end{align*}
\end{proposition}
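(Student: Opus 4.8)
The plan is to evaluate the variation of each operator directly at the base $G_2$-structure, exploiting the drastic simplifications afforded by $T=cg$ with $c$ constant. At this point $\nabla T=0$, $\Vop T=T_{ab}\varphi_{abk}=0$, $\Pop(T)=0$, $\ric=6c^2g$, $\scal=42c^2$, and $(T\circ(\Vop T\lrcorner\varphi))_\sym=0$; crucially, the $G_2$-Bianchi identity \eqref{eqn:g2_bianchi} with $\nabla T=0$ collapses to the curvature identity $R_{ijab}\varphi_{abk}=-2c^2\varphi_{ijk}$, which (together with its consequence $R_{kpqm}\varphi_{pqj}=c^2\varphi_{kmj}$, obtained by antisymmetrizing and applying the first Bianchi identity) is the key tool for disposing of all curvature terms. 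I would first record the simplified torsion variation: substituting $T=cg$ and $\Vop T=0$ into \eqref{eqn:torsion_variation} gives $\dot T_{ij}=c\,h_{ij}+c\,X_l\varphi_{lij}+\curl(h)_{ij}+\nabla_iX_j$, and contracting this against $\varphi$ via \eqref{eqn:g2identities_1} yields the two compact identities $\tr\dot T=c\,\tr h+\Div X$ and $\dot{\Vop T}=-\Div h+\nabla\tr h+\curl X+6cX$. These, together with $\frac{\partial}{\partial t}|T|^2=2c(\Div X-c\,\tr h)$, are exactly what make the scalar correction terms tractable. Since $\hat P_1$, $\tilde P_1$, $P_1$ differ only by scalars times $g$, the strategy is to compute $\dot{\hat P_1}$ once and obtain the others by adding the variations of these corrections.

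For $\dot{\hat P_1}$ I would differentiate term by term. The Ricci term uses the standard Riemannian linearization, which at the Einstein base $\ric=6c^2g$ and with $\dot g=2h$ reads $-\dot\ric=\Delta_Lh-\mathcal L_{\Div h-\frac12\nabla\tr h}\,g$. Because $\Vop T=0$ at the base, $\frac{\partial}{\partial t}(\mathcal L_{\Vop T}g)=\mathcal L_{\dot{\Vop T}}g$, into which I substitute the formula for $\dot{\Vop T}$; the term $(T\circ(\Vop T\lrcorner\varphi))_\sym$ has vanishing variation, since the only surviving piece is the symmetric part of the antisymmetric tensor $c\,\dot{\Vop T}_l\varphi_{lij}$; and $\frac{\partial}{\partial t}(\tr T\,T_\sym)$ follows directly from $\dot{\tr T}$ and $\dot T_\sym=c\,h+\curl(h)_\sym+\tfrac12\mathcal L_Xg$. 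The main bookkeeping step is to recognize that the various $\mathcal L$-type contributions combine, using $\tilde B(h,X)=\Div h-\tfrac14\nabla\tr h+\tfrac12\curl X-cX$ (Corollary \ref{cor:tildeBianchi} specialized to $T=cg$, where $\Pop(T)=0$ and $T(X)=cX$), into $-\tfrac23\mathcal L_{\tilde B(h,X)}g+\tfrac{5c}{6}\mathcal L_Xg$, producing the stated $\dot{\hat P_1}$.

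The formulas for $\dot{\tilde P_1}$ and $\dot P_1$ then follow by adding $\frac{\partial}{\partial t}\bigl[\tfrac13(|T|^2-\tfrac13|\Vop T|^2)g\bigr]$ and $\frac{\partial}{\partial t}[\Fop\,g]$ respectively. Here I use $\frac{\partial}{\partial t}|\Vop T|^2=0$ (as $\Vop T=0$), $\frac{\partial}{\partial t}|T|^2=2c(\Div X-c\,\tr h)$, $\frac{\partial}{\partial t}(\tr T)^2=14c(\Div X-c\,\tr h)$, and the scalar-curvature variation \eqref{eqn:var_scal}, together with the base values $\tfrac13|T|^2=\tfrac{7c^2}{3}$ and $\Fop=-\tfrac{7c^2}{2}$ multiplying $\dot g=2h$. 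A pleasant consistency check is that the $7c^2h$ term in $\dot{\hat P_1}$ is exactly cancelled by the $-7c^2h$ arising from $\Fop\,\dot g$, so no bare $h$ term survives in $\dot P_1$, matching the stated expression.

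Finally, $\dot P_2$ is the main obstacle. Writing $\dot P_2=\tfrac23\,\frac{\partial}{\partial t}\Div T+\tfrac13\,\frac{\partial}{\partial t}\nabla\tr T+\tfrac{7c}{3}\dot{\Vop T}$, the last two pieces are immediate from $\dot{\tr T}$ and $\dot{\Vop T}$, but $\frac{\partial}{\partial t}\Div T$ genuinely involves the Levi-Civita connection: I must include the Christoffel variation $\dot\Gamma^l_{ik}=g^{lm}(\nabla_ih_{mk}+\nabla_kh_{mi}-\nabla_mh_{ik})$, which on $T=cg$ produces the $-3c\,\Div h+2c\,\nabla\tr h$ terms, and I must commute one pair of covariant derivatives in $\Div\curl(h)$. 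The curvature generated there is contracted with $\varphi$ and then with the symmetric tensor $h$, so it \emph{vanishes} by virtue of $R_{kpqm}\varphi_{pqj}=c^2\varphi_{kmj}$ being antisymmetric in $k,m$. Assembling everything gives $\dot P_2$ explicitly in terms of $\Delta X$, $\nabla\Div X$, $\curl\Div h$, $\Div h$, $\nabla\tr h$, $\curl X$ and $c^2X$. The remaining step is to match this with the stated compact form by expanding $\tfrac23\curl(\tilde B(h,X))$ (using $\curl(\nabla f)=0$) and invoking the identity $\curl\curl X=-\Delta X+\nabla\Div X+12c^2X-4c\,\curl X$; the $c^2$-content of this last identity is pinned down by the curvature--$\psi$ contraction $R_{apqm}\psi_{pqaj}$, which I would reduce using $\psi_{ijkl}=g_{ik}g_{jl}-g_{il}g_{jk}-\varphi_{ijp}\varphi_{klp}$ from \eqref{eqn:g2identities_1} together with the $G_2$-Bianchi curvature identity to a multiple of $g_{jm}$. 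I expect the delicate points throughout to be (i) the consistent sign and index bookkeeping in the $\varphi$- and $\psi$-contraction identities, and (ii) correctly accounting for every curvature term produced by commutation and reducing it via $R_{ijab}\varphi_{abk}=-2c^2\varphi_{ijk}$; this is what confines the entire curvature dependence of $\dot P_2$ to the single $12c^2X$ term.
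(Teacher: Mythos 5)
Your proposal is correct and follows essentially the same route as the paper's proof: term-by-term variation of the constituent operators at the base $T=cg$ (standard linearization of $\ric$ and $\scal$, the simplified torsion variation giving $\dot{\Vop T}=-\Div h+\nabla\tr h+\curl X+6cX$, reduction of all curvature contractions via the $G_2$-Bianchi identity $R_{ijab}\varphi_{abk}=-2c^2\varphi_{ijk}$, and recombination of the Lie-derivative and $\curl$ terms into $\tilde B(h,X)$), which is exactly what the paper's Lemmas on second-order and lower-order variations do. The only slight imprecision is attributing the full $-3c\,\Div h+2c\,\nabla\tr h$ in $\dot P_2$ to the Christoffel variation of $\Div T$ alone (part of it comes from $\tfrac13\tr T\,\Vop T$ and $\tfrac13\nabla\tr T$), but since you assemble all contributions at the end this does not affect the argument.
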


\begin{proof}
The proposition is a direct consequence of Lemma \ref{lemma:2_order} and Lemma \ref{lemma:low_order} that follow.

First, using Lemma \ref{lemma:2_order} and Lemma \ref{lemma:low_order} we obtain
\begin{align*}
&\left. \frac{\partial}{\partial t} \right|_{t=0} \left( -\ric -\frac{1}{3} \mathcal L_{VT} g - \frac{2}{3} (T\circ (VT\lrcorner \phi))_\sym +\tr T T_\sym \right) =\\
&=\Delta_L h - \frac{2}{3} \mathcal L_{\tilde B(h,X)} g - \frac{8c}{3} \mathcal L_X g+c(\Div(X) - c\tr h) g + 7c^2 h +7 c \curl(h)_\sym +\frac{7c}{2} \mathcal L_X g,\\
&=\Delta_L h - \frac{2}{3} \mathcal L_{\tilde B(h,X)} g + \frac{5c}{6} \mathcal L_X g+c(\Div(X) - c\tr h) g + 7c^2 h +7 c \curl(h)_\sym,
\end{align*}
which proves the variation formula of $\hat P_1$.

Then, since 
\begin{align*}
\tilde P_1&= \hat P_1 +\left(\frac{1}{3} |T|^2 -\frac{1}{9} |\Vop T|^2\right) g = \hat P_1 +\left(\frac{1}{6}\scal -\frac{1}{6}(\tr T)^2 -\frac{1}{3} |T|^2\right)g,\\
P_2&=\frac{2}{3}\Div T +\frac{1}{3} \nabla \tr T +\frac{1}{3}\tr T \Vop T,
\end{align*} 
using Lemma \ref{lemma:low_order} once more proves the result.
\end{proof}

\subsubsection{Variation formulae}
\begin{lemma}[Variation of second order terms]\label{lemma:2_order}
    Let $M$ be a $7$-manifold with a $G_2$-structure $\varphi$ inducing the Riemannian metric $g$, and let $\psi=*_g \varphi$. Let $\{\varphi_t\}_{t\in (-\varepsilon,\varepsilon)}$ be a smooth $1$-parameter family of $G_2$-structures such that $\varphi_0=\varphi$ and
    $$\left. \frac{d}{dt}\right|_{t=0} \varphi_t = h\diamond \varphi + X\lrcorner \psi$$
    for some symmetric $2$-tensor $h$ and $1$-form $X$ on $M$. Then, if $T_\varphi=c g$,
    \begin{align*}
        \left.\frac{\partial}{\partial t}\right|_{t=0} \scal&= 2(-\Delta \tr h + \Div(\Div h) -6c^2 \tr h),\\
  \left.\frac{\partial}{\partial t}\right|_{t=0} \ric &= -\Delta_L h +\mathcal L_{\Div h -\frac{1}{2}\nabla \tr h} g.
  \end{align*}

Moreover,
\begin{align*}
        \left.\frac{\partial}{\partial t}\right|_{t=0} \mathcal L_{\Vop T} g &=  \mathcal L_{-\Div h + \nabla \tr h +\curl(X) +6c X} g,\\
        \left.\frac{\partial}{\partial t}\right|_{t=0} \Div T &= \Delta X + \curl(\Div h)-c\Div h -c \curl(X),\\
        \left.\frac{\partial}{\partial t}\right|_{t=0} \nabla \tr T &= \nabla \Div X -c \nabla \tr h,
    \end{align*}
and
    \begin{equation}\label{eqn:symmetric_D}
        \left.\frac{\partial}{\partial t}\right|_{t=0} \left(-\ric -\frac{1}{3}\mathcal L_{\Vop T} g\right)= \Delta_L h-\frac{2}{3}\mathcal L_{\tilde B(h,X)} g- \frac{8c}{3}\mathcal L_X g,
        \end{equation}
        \begin{equation}\label{eqn:vector_D}
        \begin{aligned}
        &\left.\frac{\partial}{\partial t}\right|_{t=0} \left(\frac{2}{3}\Div T+ \frac{1}{3} \nabla\tr T\right)=\\
        &= \Delta X_k +\frac{2}{3} \curl(\tilde B(h,X))_k -\frac{2c}{3} \Div h+\frac{4c}{3}\curl(X) -\frac{c}{3}\nabla \tr h  - 2c^2 X.    \end{aligned}
        \end{equation}
\end{lemma}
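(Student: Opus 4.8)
The plan is to obtain each listed variation by specializing the general variation formulas of Section 2 to the base point, where $T=cg$. At such a point the structure collapses dramatically: one has $\Vop T=0$, $\nabla T=0$, $\tr T=7c$, $\Pop(T)=0$ and, by Proposition \ref{prop:g2bianchi_identities}, $\ric=6c^2 g$. A further essential input is the \emph{nearly-}$G_2$ \emph{curvature identity}: since $\nabla T=0$, the left-hand side of the $G_2$-Bianchi identity \eqref{eqn:g2_bianchi} vanishes, forcing $R_{ijab}\varphi_{abk}=-2c^2\varphi_{ijk}$. I will use this to reduce every contraction of the Riemann tensor against $\varphi$ or $\psi$ to a multiple of $c^2$.

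First I would dispatch the two purely Riemannian variations. As $\frac{\partial g}{\partial t}=2h$, the formula for $\frac{\partial}{\partial t}\scal$ follows from \eqref{eqn:var_scal} with $h$ replaced by $2h$ after using $\langle\ric,h\rangle=6c^2\tr h$, and the formula for $\frac{\partial}{\partial t}\ric$ follows from the standard linearization $D_g\ric(k)=-\frac12\Delta_L k+\frac12\mathcal L_{\Div k-\frac12\nabla\tr k}g$ with $k=2h$. Next, for the three $G_2$-specific quantities the starting point is the torsion variation \eqref{eqn:torsion_variation}, which at $T=cg$ reads $\dot T_{ij}=ch_{ij}+cX_l\varphi_{lij}+\curl(h)_{ij}+\nabla_iX_j$. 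From this I would compute: for $\mathcal L_{\Vop T}g$, I use that $\Vop T=0$ to see that only $\mathcal L_{\dot{\Vop T}}g$ survives (the connection-variation terms multiply $\Vop T=0$), and evaluate $\dot{\Vop T}=\dot T_{ab}\varphi_{abk}$ via the contraction identities \eqref{eqn:g2identities_1} and \eqref{eqn:g2identities_3}; for $\Div T$ I differentiate $\nabla^iT_{ij}$, noting the $\dot g^{-1}$ term drops because $\nabla T=0$, that the Christoffel variation contributes $-2c\Div h$, that $\nabla^i(cX_l\varphi_{lij})$ gives $-c\curl X$, and that commuting derivatives in $\nabla^i\curl(h)_{ij}$ produces curvature and $\nabla\varphi$ terms which all vanish by the symmetry of $\nabla h$ against the antisymmetry of $\varphi$ and $\psi$; and for $\nabla\tr T$ I simply differentiate the scalar $\tr T$, for which $\frac{\partial}{\partial t}$ commutes with $\nabla$.

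Finally I would assemble \eqref{eqn:symmetric_D} and \eqref{eqn:vector_D} as linear combinations of the pieces above, using that at $T=cg$ the Bianchi-type operator of Corollary \ref{cor:tildeBianchi} simplifies to $\tilde B(h,X)=\Div h-\frac14\nabla\tr h+\frac12\curl X-cX$ (since $\Pop(T)=0$ and $T(X)=cX$). The symmetric identity \eqref{eqn:symmetric_D} then drops out by collecting the Lie-derivative vector fields of $-\ric$ and $-\frac13\mathcal L_{\Vop T}g$ and recognizing the combination $-\frac23\tilde B(h,X)-\frac{8c}{3}X$.

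The main obstacle is the vector identity \eqref{eqn:vector_D}. After inserting the formulas for $\frac{\partial}{\partial t}\Div T$ and $\frac{\partial}{\partial t}\nabla\tr T$ and expanding $\curl(\tilde B(h,X))$, all the $\Div h$, $\nabla\tr h$, and $\curl(\Div h)$ terms cancel, and the statement reduces to the single identity
$$\curl(\curl X)=-\Delta X+\nabla\Div X-4c\,\curl X+6c^2X.$$
Proving this is the delicate step: commuting the two covariant derivatives inside $\curl(\curl X)$ generates Riemann curvature, while the term coming from $\nabla\varphi=c\psi$ yields $-4c\,\curl X$ through \eqref{eqn:g2identities_3}. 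The two curvature contractions — one against $g$ (a Ricci term) and one against $\psi$ — must each be reduced using $R_{ijab}\varphi_{abk}=-2c^2\varphi_{ijk}$, the latter via a first-Bianchi manipulation that yields $R_{pabd}\varphi_{abn}=c^2\varphi_{pdn}$, and they must combine to exactly $+6c^2X$. Getting the signs and index placements of these curvature contractions to close is the crux; everything else is routine contraction algebra with the identities of Section 2.
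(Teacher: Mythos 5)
Your proposal is correct and follows essentially the same route as the paper: standard linearizations for $\scal$ and $\ric$, direct differentiation of $T$, $\Vop T$, $\Div T$, $\nabla\tr T$ at the base point $T=cg$, and assembly via the simplified $\tilde B(h,X)=\Div h-\tfrac14\nabla\tr h+\tfrac12\curl X-cX$. Your reduction of \eqref{eqn:vector_D} to the identity $\curl(\curl X)=-\Delta X+\nabla\Div X-4c\curl X+6c^2X$ is exactly equivalent to the paper's identity \eqref{eqn:curl_tilde_B} (which the paper establishes by expanding $\curl(\tilde B(h,X))$, using $\ric=6c^2g$ and the vanishing of the $R\cdot\psi$ contraction by the first Bianchi identity), and your curvature identities check out with the paper's conventions.
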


\begin{proof}
Recall that we assume that the variation satisfies $T=cg$ at $t=0$, which implies that, at $t=0$, $\Vop T=0$ and $\ric = 6c^2 g$.

The variation of the Ricci and scalar curvature operators is standard in the literature, see for instance \cite{besse}.

To compute the variation of $\mathcal L_{\Vop T} g$ we first compute the variation of $\Vop T$. Using \eqref{eqn:torsion_variation} the contraction identities we obtain
\begin{equation}\label{eqn:VT_evol}
\begin{aligned}
\left.\frac{\partial}{\partial t}\right|_{t=0} \Vop T_j &= \left.\frac{\partial}{\partial t}\right|_{t=0} (T_{ab}\varphi_{cdj}g^{ac}g^{bd}), \\
&= ( cX_l \varphi_{lab} + \nabla_k h_{la}\varphi_{klb} +\nabla_a X_b)\varphi_{jab},   \\
&=6c X_j +\nabla_k h_{la} (g_{kj} g_{la} - g_{ka} g_{lj} - \psi_{klja}) + \curl(X)_j, \\
&=\nabla_j \tr h - \Div h_j + \curl(X)_j +6c X_j.\end{aligned}
\end{equation}

Since $\mathcal L_{\Vop T} g_{ij} = \nabla_i \Vop T_j +\nabla_j \Vop T_i$, the result follows.

Therefore,
\begin{align*}
\left.\frac{\partial}{\partial t}\right|_{t=0} \left(-\ric -\frac{1}{3}\mathcal L_{\Vop T} g\right) &=\Delta_L h -\mathcal L_{\Div h -\frac{1}{2}\nabla\tr h} g - \frac{1}{3} \left( \mathcal L_{ - \Div h+\curl(X)+\nabla \tr h + 6c X} g \right), \\
&=\Delta_L h-\frac{2}{3}\mathcal L_{\Div h -\frac{1}{4}\nabla\tr h +\frac{1}{2}\curl(X) + 3 cX} g,\\
&=\Delta_L h-\frac{2}{3}\mathcal L_{\tilde B(h,X)} g- \frac{8c}{3}\mathcal L_X g,
\end{align*}
where $\tilde B(h,X)=\Div h -\frac{1}{4}\nabla\tr h +\frac{1}{2}\curl(X) - cX$, which proves \eqref{eqn:symmetric_D}.

We now compute the variation of the operators $\Div T$ and $\nabla \tr T$.
\begin{align*}
&\left. \frac{\partial}{\partial t}\right|_{t=0} \Div T_k=\left. \frac{\partial}{\partial t}\right|_{t=0}  \left(\nabla_a T_{bk} g^{ab}\right), \\
&=  \nabla_a \left(\left. \frac{\partial}{\partial t}\right|_{t=0} T_{ak} \right) - c(\nabla_a h_{ak} +\nabla_a h_{ak} -\nabla_k h_{aa}    ) - c(\nabla_a h_{ka}  +\nabla_kh_{aa}  -\nabla_a h_{ak}),\\
&= \nabla_a (T_{ap}h_{pk}+ T_{ap}X_m \varphi_{mpk} + \nabla_p h_{qa}\varphi_{pqk} + \nabla_a X_k ) -2c \Div h_k,\\
&=\nabla_a (ch_{ak}+ cX_m \varphi_{mak} + \nabla_p h_{qa}\varphi_{pqk} + \nabla_a X_k ) -2c \Div h_k\\
&=-c \Div h_k - c \curl(X)_k +\nabla_a \nabla_p h_{qa} \varphi_{pqk} +\Delta X_k\\
&=-c \Div h_k - c \curl(X)_k +\left(\nabla_p \nabla_a h_{qa} + R_{apmq} h_{ma} +R_{apma} h_{qm}\right) \varphi_{pqk} +\Delta X_k,\\
&=-c \Div(h)_k - c \curl(X)_k +\curl(\Div h)_k +\Delta X_k, 
\end{align*}
where we have used that for nearly $G_2$ structures the Ricci tensor equals to $\ric = 6c^2 g$ as well as the fact that $ R_{apmq} h_{ma}$ is symmetric in $p$ and $q$. Now
\begin{align*}
\left. \frac{\partial}{\partial t}\right|_{t=0} \nabla_k\tr T &=\left. \frac{\partial}{\partial t}\right|_{t=0}  \nabla_k (T_{ab}g^{ab}) \\ 
&= \nabla_k \left( \left.\frac{\partial}{\partial t}\right|_{t=0} (T_{ab} g^{ab}) \right) \\
&=\nabla_k \left( (T_{am} h_{mb} +T_{am} X_l \phi_{lmb} +\nabla_m h_{la} \phi_{mlb} +\nabla_a X_b )g^{ab}\right) -2\nabla_k (T_{ab} h_{ab}),\\
&=\nabla_k ( ch_{ab} + c X_l \phi_{lab} +\nabla_m h_{la}\phi_{mlb} +\nabla_a X_b) g^{ab} -2c\nabla_k\tr h,\\
&=-c\nabla_k \tr h +\nabla_k \Div(X).
\end{align*}

It follows that
\begin{equation}\label{eqn:vector_evol}
\begin{aligned}
&\left.\frac{\partial}{\partial t}\right|_{t=0} \left(\frac{2}{3}\Div T+ \frac{1}{3} \nabla\tr T\right)=\\
&= \frac{2}{3}(\Delta X + \curl(\Div h)-c\Div h -c \curl(X))+\frac{1}{3}(\nabla \Div X -c \nabla \tr h).
\end{aligned}
\end{equation}

Next, we observe that we can express all the second order terms in \eqref{eqn:vector_evol}
 that are not $\Delta X$ in terms $\Delta X$, $\curl(\tilde B)$, and lower order terms, in the sense that
\begin{equation}\label{eqn:curl_tilde_B}
\frac{2}{3} \curl(\Div h) + \frac{1}{3} \nabla \Div X = \frac{2}{3} \curl(\tilde B(h,X)) +\frac{1}{3} \Delta X +2c \curl(X) - 2c^2 X.
\end{equation}
 To see this, we compute
\begin{align*}
   & \curl(\tilde B(h,X))_k = \\
   &=\curl(\Div h)_k +\frac{1}{2} \nabla_i \nabla_a X_b \varphi_{abj} \varphi_{kij}+\frac{c}{2}\nabla_a X_b \psi_{abij} \varphi_{kij} -c\curl(X)_k,\\
    &=\curl(\Div h)_k  +\frac{1}{2} \nabla_i \nabla_a X_b (g_{ak} g_{bi}-g_{ai}g_{bk} -\psi_{abki}) -2c \curl(X)_k-c\nabla_iX_j \varphi_{ijk},\\
    &=\curl(\Div h)_k  +\frac{1}{2} \nabla_i \nabla_k X_i -\frac{1}{2} \Delta X_k - \frac{1}{2}\nabla_i \nabla_a X_b \psi_{abki}-3c\curl(X)_k,\\
    &=\curl(\Div h)_k +\frac{1}{2}\nabla_i \nabla_k X_i - \frac{1}{2}\Delta X_k -\frac{1}{4}(\nabla_i\nabla_a X_b -\nabla_a\nabla_i X_b)\psi_{abki} - 3c \curl(X)_k,\\
    &=\curl(\Div h)_k +\frac{1}{2}\nabla_k \Div X +\frac{1}{2}R_{km}X_m  -\frac{1}{2}\Delta X_k -\frac{1}{4}R_{iamb}X_m \psi_{abki}-3c \curl(X)_k.
\end{align*}
Then, using that $\ric=6c^2 g$ and that the contraction $R_{iamb} \psi_{abki}=0$ due to the first Bianchi identity, we obtain \eqref{eqn:curl_tilde_B}.

Finally, substituting \eqref{eqn:curl_tilde_B} into \eqref{eqn:vector_evol} we obtain \eqref{eqn:vector_D}.
\end{proof}

\begin{lemma} [Variations of lower order terms]\label{lemma:low_order}
Let $M$ be a $7$-manifold with a $G_2$-structure $\varphi$ inducing the Riemannian metric $g$, and let $\psi=*_g \varphi$. Let $\{\varphi_t\}_{t\in (-\varepsilon,\varepsilon)}$ be a smooth $1$-parameter family of $G_2$-structures on $M$ such that $\varphi_0=\varphi$ and
    $$\left. \frac{d}{dt}\right|_{t=0} \varphi_t = h\diamond \varphi + X\lrcorner \psi$$
    for some symmetric $2$-tensor $h$ and $1$-form $X$ on $M$. Then, if $T_\varphi=c g$,
\begin{align*}
\left.\frac{\partial}{\partial t}\right|_{t=0} \tr T T_{\sym} &= c(\Div(X)-c\tr h) g + 7c^2 h + 7c \curl(h)_\sym +\frac{7c}{2}\mathcal L_X g,\\
\left.\frac{\partial}{\partial t}\right|_{t=0} (T\circ (\Vop T\lrcorner \varphi))_\sym &= 0,\\
\left.\frac{\partial}{\partial t}\right|_{t=0} \tr T \Vop T &= 42 c^2 X +7c\nabla \tr h -7c \Div(h) +7 c \curl(X),
\end{align*}
and
\begin{align*}
\left.\frac{\partial}{\partial t}\right|_{t=0} |\Vop T|^2 &= 0,\\
\left.\frac{\partial}{\partial t}\right|_{t=0} |T|^2 &=2 c \Div(X) - 2c^2 \tr h,\\
\left.\frac{\partial}{\partial t}\right|_{t=0} (\tr T)^2 &= 14 c \Div(X) - 14 c^2 \tr h.
\end{align*}
\end{lemma}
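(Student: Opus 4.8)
The engine of every computation here is the torsion variation formula \eqref{eqn:torsion_variation}, specialized to the base point, where $T_{ij}=c\,g_{ij}$. Writing $\dot{(\,\cdot\,)}=\left.\frac{d}{dt}\right|_{t=0}$, the plan is to first record
$$\dot T_{ij}=c\,h_{ij}+c\,X_l\varphi_{lij}+\curl(h)_{ij}+\nabla_i X_j,$$
where I use that $\nabla_a h_{bi}\varphi_{abj}=\curl(h)_{ij}$, together with the base-point data $\Vop T=0$, $\ric=6c^2 g$, and the metric variations $\dot g_{ij}=2h_{ij}$, $\dot g^{ij}=-2h^{ij}$ from \eqref{eqn:metric_evol}. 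Two byproducts of this formula feed into everything else: tracing $\dot T$ against $g$ and accounting for $\dot g^{ij}$ gives $\dot{\tr T}=\Div X-c\,\tr h$ (consistent with $\dot{\nabla\tr T}$ in Lemma \ref{lemma:2_order}), while $\dot{\Vop T}_j=\nabla_j\tr h-\Div h_j+\curl(X)_j+6cX_j$ is exactly \eqref{eqn:VT_evol}. With these two in hand the remaining work is organizational.

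I would next dispatch the two purely algebraic quantities by the product rule at the base point. For $|T|^2=g^{ik}g^{jl}T_{ij}T_{kl}$ every factor is either $c\,g$ or contributes through $\dot T$; the two metric variations each yield $-2c^2\tr h$-type terms and then cancel against one of the $\dot T$ contributions, leaving $\dot{|T|^2}=2c\,\Div X-2c^2\tr h$. Similarly $\dot{(\tr T)^2}=2\,\tr T\,\dot{\tr T}=14c\,\Div X-14c^2\tr h$, since $\tr T=7c$ at the base point.

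The three terms built from $\Vop T$ all exploit that $\Vop T=0$ at the base point. Thus $\dot{|\Vop T|^2}=0$ is immediate, and $\dot{(\tr T\,\Vop T)}=\tr T\,\dot{\Vop T}=7c\,\dot{\Vop T}$, which expands to the claimed expression $42c^2 X+7c\nabla\tr h-7c\Div h+7c\curl(X)$. The one point needing care is $(T\circ(\Vop T\lrcorner\varphi))_\sym$: both the tensor and its variation are controlled by $\Vop T$, so the only surviving contribution is $T_{ip}\,\dot{\Vop T}_m\varphi_{mpj}=c\,\dot{\Vop T}_m\varphi_{mij}$, which is antisymmetric in $(i,j)$; hence its symmetric part vanishes identically.

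The remaining term $\tr T\,T_\sym$ ties the others together and carries the heaviest bookkeeping, so I expect it to be the main obstacle — though the difficulty is combinatorial rather than analytic. Here $T_\sym=T=c\,g$ at the base point, so the product rule gives $\dot{(\tr T\,T_\sym)}=\dot{\tr T}\,(cg)+7c\,(\dot T)_\sym$. Taking the symmetric part of $\dot T$ drops the antisymmetric piece $cX_l\varphi_{lij}$ and retains $c\,h+\curl(h)_\sym+\tfrac12\mathcal L_X g$; substituting $\dot{\tr T}=\Div X-c\tr h$ then produces $c(\Div X-c\tr h)g+7c^2 h+7c\,\curl(h)_\sym+\tfrac{7c}{2}\mathcal L_X g$, as claimed. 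The only things to watch throughout are correctly separating the symmetric and antisymmetric parts in $(i,j)$, converting $\nabla_a h_{bi}\varphi_{abj}$ into $\curl(h)$, and not double-counting the metric variations in the quadratic expressions, since the hypothesis $T=cg$ collapses most contractions via $\Vop T=0$ and $\ric=6c^2 g$.
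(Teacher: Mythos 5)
Your proposal is correct and follows essentially the same route as the paper's proof: specialize the torsion variation formula \eqref{eqn:torsion_variation} to the base point $T=cg$, record $\dot{\tr T}=\Div X-c\tr h$ and $\dot{\Vop T}$ as in \eqref{eqn:VT_evol}, and then apply the product rule term by term, using $\Vop T=0$ to kill the $|\Vop T|^2$ and $(T\circ(\Vop T\lrcorner\varphi))_\sym$ contributions (the latter via antisymmetry of $\varphi_{mij}$ in $i,j$) and splitting $\dot T$ into its symmetric and antisymmetric parts for $\tr T\, T_\sym$. All the bookkeeping you flag (the cancellation of one $-2c^2\tr h$ in $|T|^2$, the identification $\nabla_a h_{bi}\varphi_{abj}=\curl(h)_{ij}$, and $(\nabla X)_\sym=\tfrac12\mathcal L_X g$) matches the paper's computation.
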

\begin{proof}
By \eqref{eqn:torsion_variation} we obtain
\begin{align*}
\left.\frac{\partial}{\partial t}\right|_{t=0} \tr T &=T_{ma}h_{am} +T_{ma}X_b \varphi_{bam} + \nabla_a h_{bm}\varphi_{abm} +\nabla_m X_m - 2T_{ab}h_{ab},\\
&=\Div X -c\tr h.
\end{align*}
Therefore,
\begin{align*}
\left.\frac{\partial}{\partial t}\right|_{t=0} \tr T T_{ij} &= c(\Div X - c \tr h)g_{ij}+7c(T_{ia}h_{aj} +T_{ia}X_b \varphi_{baj} + \nabla_a h_{bi}\varphi_{abj} +\nabla_i X_j)\\
&=c( \Div(X)-c\tr h)g_{ij} +7c (c h_{ij} + c X_b \varphi_{bij} +\nabla_a h_{bi}\varphi_{abj} + \nabla_i X_j),
\end{align*}
thus
\begin{align*}
\left.\frac{\partial}{\partial t}\right|_{t=0} \tr T T_{\sym} &= c(\Div(X)-c\tr h) g + 7c^2 h + 7c \curl(h)_\sym +\frac{7c}{2}\mathcal L_X g.
\end{align*}
Similarly, by \eqref{eqn:VT_evol} and since $\Vop T=0$ at $t=0$,
\begin{equation*}
\left.\frac{\partial}{\partial t}\right|_{t=0} \tr T \Vop T_k = 42 c X_k+7c \nabla_k \tr h - 7c \Div h_k +7c \curl(X)_k ),
\end{equation*}
moreover
\begin{equation*}
\left.\frac{\partial}{\partial t}\right|_{t=0} (\tr T)^2 = 14c (\Div X-c\tr h).
\end{equation*}
Now, again since $\Vop T=0$ at $t=0$ we obtain
\begin{align*}
\left.\frac{\partial}{\partial t}\right|_{t=0} |\Vop T|^2 &= 0,\\
 \left.\frac{\partial}{\partial t}\right|_{t=0} \left(T_{ia}\Vop T_b \varphi_{cdj} g^{ad}g^{bc} \right)&=c\left(\left.\frac{\partial}{\partial t}\right|_{t=0} \Vop T_b\right) \varphi_{bij},
\end{align*}
therefore, 
\begin{equation*}
\left.\frac{\partial}{\partial t}\right|_{t=0} (T\circ (\Vop T\lrcorner \varphi))_\sym = 0.
\end{equation*}
\end{proof}

\subsection{Proof of Theorem \ref{thm:second_variation} }

Let $\varphi$ be a $G_2$-structure inducing the Riemannian metric $g$ with $\vol(g)=1$ on a $7$-manifold $M$ which is a critical point of $\tilde{\mathcal F}$. Suppose that $\{\varphi_{t,s}\}_{t,s\in (-\epsilon,\epsilon)}$ be a smooth $2$-parameter family of $G_2$-structures on $M$ with
\begin{align*}
\left.\frac{\partial}{\partial t}\right|_{(0,0)} \varphi_{t,s} &= h\diamond \varphi + X\lrcorner \psi,\\
\left.\frac{\partial}{\partial s}\right|_{(0,0)} \varphi_{t,s} &= w\diamond \varphi + Y\lrcorner \psi,\\
\varphi_{0,0}&=\varphi,\\
\vol(g_{t,s})&=1.
\end{align*}
By Proposition \ref{prop:norm_F_variation} we know that
\begin{equation*}
\frac{\partial}{\partial s} \tilde{\mathcal F}(\varphi_{t,s}) 
= \int_M \langle P_1(\varphi_{t,s})-\frac{5}{7} \mathcal F(\varphi_{t,s}) g_{t,s}, w\rangle_{t,s} +\langle P_2(\varphi_{t,s}),Y\rangle_{t,s} d\mu_{g_{t,s}}.
\end{equation*}
Since $\varphi$ is a critical point of $\tilde{\mathcal F}$, we know by Proposition \ref{prop:norm_F_variation} that $P_1(\varphi)=\frac{5}{7} \mathcal F(\varphi) g$ and $P_2(\varphi)=0$, hence
\begin{align*}
\left.\frac{\partial^2}{\partial t\partial s}\right|_{t=s=0} \tilde{\mathcal F}(\varphi_{t,s})= \int_M \langle K_1(h,X), w \rangle + \langle K_2(h,X), Y\rangle d\mu_g
\end{align*}
where
\begin{align*}
K_1(h,X)&= \left.\frac{d}{dt}\right|_{t=0} (P_1(\varphi_{t,0}) -\frac{5}{7} \mathcal F(\varphi_{t,0}) g_{t,0})\\
K_2(h,X)&=\left.\frac{d}{dt}\right|_{t=0} P_2(\varphi_{t,0}).
\end{align*}
Since $(t,s)\mapsto \tilde{\mathcal F}(\varphi_{t,s})$ is a smooth function if follows that 
$$\left.\frac{\partial^2}{\partial t\partial s}\right|_{t=s=0} \tilde{\mathcal F}(\varphi_{t,s}) = \left.\frac{\partial^2}{\partial s\partial t}\right|_{t=s=0} \tilde{\mathcal F}(\varphi_{t,s})$$ 
hence the operator $(h,X) \mapsto (K_1(h,X),K_2(h,X))$ is symmetric.

Finally, by Proposition \ref{prop:variation_P}, we know that if in addition $\varphi$ satisfies $T=cg$ then
\begin{equation}\label{eqn:K12}
\begin{aligned}
K_1(h,X)&=\Delta_L h -\frac{2}{3}\mathcal L_{\tilde B(h,X)} g   +\frac{1}{3}\left(- \Delta \tr h +\Div(\Div(h)) \right) g
\\
&+\frac{5c}{6}\mathcal L_X g + 7c \curl(h)_\sym-2c\Div(X) g +5c^2 h,\\
K_2(h,X)&=\Delta X +\frac{2}{3} \curl(\tilde B(h,X)) - 3 c\Div h +2c \nabla \tr h +\frac{11c}{3} \curl(X) +12 c^2 X.
\end{aligned}
\end{equation}
The remaining assertions of Theorem \ref{thm:second_variation} follow from the more detailed analysis of the behaviour of the operators $K_1$, $K_2$ into the conformal and $G_2$-transverse-traceless directions, in Proposition \ref{prop:conformal_2var} and Proposition \ref{prop:tt_2var} respectively, below.

\subsection{Conformal deformations of torsion free or nearly $G_2$-structures} 

In this section, we focus on the behaviour of the normalized $G_2$-Hilbert functional within the conformal class of a given $G_2$-structure which is either torsion free or nearly-$G_2$. 

Our first result asserts that, in this case, the second variation of $\tilde{\mathcal F}$ is positive definite, hence $\varphi$ is a local minimum of $\tilde{\mathcal F}$ in its conformal class.

\begin{proposition}\label{prop:conformal_2var}
Suppose that $\varphi$ is a $G_2$-structure on $M$ inducing the unit-volume Riemannian metric $g$ and the dual $4$-form $\psi$, and its torsion satisfies $T=cg$ for some constant $c$, and consider a conformal variation $\{\varphi_t\}_{t\in (-\epsilon,\epsilon)}$ of $\varphi$, inducing the family $\{g_t\}_{t\in(-\epsilon,\epsilon)}$ of unit-volume Riemannian metrics on $M$, such that $\varphi_0=\varphi$ and
$$\left.\frac{\partial}{\partial t}\right|_{t=0} \varphi_t = h\diamond\varphi,$$ where $h=fg$ for some smooth function $f$ with $\int_M f d\mu=0$. We then have
\begin{equation}\label{eqn:conformal_2nd}
\begin{aligned}
\left.\frac{\partial}{\partial t}\right|_{t=0}( P_1(\varphi_t)-\frac{5}{7}\mathcal F(\varphi_t)g_t) 
&=(-\Delta f +5c^2 f )g+\frac{1}{2}\mathcal L_{\nabla f} g, \\
\left.\frac{\partial}{\partial t}\right|_{t=0} P_2&=11 c \nabla f.
\end{aligned}
\end{equation}
In particular, the projection of the operator  $f\mapsto(K_1(f g,0),K_2(f g,0))$ on the conformal directions is described by the operator
\begin{equation}\label{eqn:conformal_operator}
f\mapsto -6\Delta f +35c^2 f,
\end{equation}
which has strictly positive eigenvalues. In particular, the second variation of the normalized $G_2$-Hilbert functional satisfies
$$D^2\tilde{\mathcal F}((fg,0),(fg,0)) \geq 35c^2 \int_M f^2 d\mu_g>0.$$
\end{proposition}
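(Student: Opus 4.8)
The plan is to obtain every assertion by substituting the purely conformal velocity $(h,X)=(fg,0)$ into the formulae \eqref{eqn:K12} for $K_1$ and $K_2$ and simplifying. This is legitimate because $K_1,K_2$ are precisely the linearizations of $P_1-\tfrac57\mathcal F g$ and $P_2$ at the static structure $\varphi$: the mean-zero hypothesis $\int_M f\,d\mu=0$ makes the family volume preserving to first order, so the scalar derivative $\tfrac{d}{dt}\mathcal F$ vanishes and the term $-\tfrac57(\tfrac{d}{dt}\mathcal F)g$ that would otherwise contaminate $K_1$ drops out. Thus the two displayed variation formulae are exactly $K_1(fg,0)$ and $K_2(fg,0)$.

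First I would record the four elementary identities that hold because $T=cg$, hence $\ric=6c^2g$. Since $\tr(fg)=7f$ and $\Div(fg)=\nabla f$, the Bianchi-type operator evaluates to $\tilde B(fg,0)=\nabla f-\tfrac14\nabla(7f)=-\tfrac34\nabla f$, so $-\tfrac23\mathcal L_{\tilde B(fg,0)}g=\tfrac12\mathcal L_{\nabla f}g$. Next, $\curl(fg)_{ij}=\nabla_af\,\varphi_{aij}$ is antisymmetric in $i,j$, so $\curl(fg)_\sym=0$, and $\curl(\nabla f)_i=(\nabla_a\nabla_bf)\varphi_{abi}=0$ since the Hessian is symmetric while $\varphi$ is skew; this kills the $7c\,\curl(h)_\sym$ term in $K_1$ and the $\tfrac23\curl(\tilde B(h,X))$ term in $K_2$. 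Using $\Delta_Lh_{ij}=\Delta h_{ij}+2R_{aijb}h_{ab}-12c^2h_{ij}$ together with the contraction $R_{aijb}g_{ab}=R_{ij}=6c^2g_{ij}$ gives the clean cancellation $\Delta_L(fg)=(\Delta f)g$. Finally $\tfrac13(-\Delta\tr(fg)+\Div\Div(fg))g=\tfrac13(-7\Delta f+\Delta f)g=-2\Delta f\,g$. Assembling these in \eqref{eqn:K12} yields
\begin{align*}
K_1(fg,0)&=(\Delta f)g+\tfrac12\mathcal L_{\nabla f}g-2\Delta f\,g+5c^2fg=(-\Delta f+5c^2f)g+\tfrac12\mathcal L_{\nabla f}g,\\
K_2(fg,0)&=-3c\Div(fg)+2c\nabla\tr(fg)=-3c\nabla f+14c\nabla f=11c\nabla f,
\end{align*}
which are the claimed expressions for $\tfrac{\partial}{\partial t}(P_1-\tfrac57\mathcal F g)$ and $\tfrac{\partial}{\partial t}P_2$.

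To reach \eqref{eqn:conformal_operator} and the positivity, I would pair $K_1(fg,0)$ with the conformal direction $g$. In dimension seven $\langle g,g\rangle=7$ and $\langle\mathcal L_{\nabla f}g,g\rangle=2\Delta f$, so $\langle K_1(fg,0),g\rangle=7(-\Delta f+5c^2f)+\Delta f=-6\Delta f+35c^2f$, identifying the conformal operator $f\mapsto-6\Delta f+35c^2f$. Then $D^2\tilde{\mathcal F}((fg,0),(fg,0))=\int_M\langle K_1(fg,0),fg\rangle\,d\mu=\int_M f(-6\Delta f+35c^2f)\,d\mu$, and integrating by parts converts $\int_M-6f\Delta f\,d\mu$ into $6\int_M|\nabla f|^2\,d\mu\ge0$, giving $D^2\tilde{\mathcal F}((fg,0),(fg,0))\ge35c^2\int_M f^2\,d\mu$, which is strictly positive whenever $f\not\equiv0$ (the gradient term alone is positive once $\int_M f\,d\mu=0$). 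The only step that needs genuine care is the identity $\Delta_L(fg)=(\Delta f)g$: one must evaluate the contraction $R_{aijb}g_{ab}$ against the Einstein condition correctly to see the two curvature contributions annihilate. Everything else is bookkeeping of traces and the two skew-symmetry vanishings of $\curl$.
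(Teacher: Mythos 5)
Your proof is correct and follows essentially the same route as the paper: substitute $(h,X)=(fg,0)$ into the linearization formulae, use $\tilde B(fg,0)=-\tfrac34\nabla f$, $\curl(fg)_\sym=0$, $\curl(\nabla f)=0$ and $\Delta_L(fg)=(\Delta f)g$ to reduce $K_1,K_2$, and then pair with the conformal direction $\eta g$ and integrate by parts. The only cosmetic difference is that the paper computes $\partial_t P_1$ and $\partial_t\bigl(\mathcal F(\varphi_t)g_t\bigr)$ separately rather than invoking the assembled expression \eqref{eqn:K12} directly; the content is identical.
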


\begin{proof}
Setting $h=fg$ and $X=0$ in \eqref{eqn:K12}, and using 
\begin{align*}
\tilde B(h,X)&= \Div(h) -\frac{1}{4}\nabla \tr h +\frac{1}{2}\curl(X) - cX=-\frac{3}{4}\nabla f, \\
\Delta_L h_{jk}&=\Delta h_{jk} +2R_{ajkb}h_{ab} -R_{ja}h_{ak}-R_{ka}h_{aj}=\Delta f g_{jk}, \\
\Delta \tr h - \Div(\Div (h))&=6 \Delta f\\
(\curl(h)_{\sym})_{jk}&=\frac{1}{2}(\nabla_a h_{bj}\phi_{abk}+\nabla_a h_{bk}\phi_{abj})=\frac{1}{2}(\nabla_a f \phi_{ajk}+\nabla_a f \phi_{akj})=0,\\
\nabla \tr h - \Div(h)&=6\nabla f,
\end{align*}
we obtain that
\begin{align*}
\left.\frac{\partial}{\partial t}\right|_{t=0} P_1&= \Delta f g+\frac{1}{2}\mathcal L_{\nabla f} g -2\Delta f g=-\Delta f  g+\frac{1}{2}\mathcal L_{\nabla f} g,\\
\left.\frac{\partial}{\partial t}\right|_{t=0} P_2&=-3c\nabla f+14c\nabla f=11c\nabla f.
\end{align*}
Equation \eqref{eqn:conformal_2nd} follows from the fact that
\begin{align*}
\left.\frac{\partial}{\partial t}\right|_{t=0} \mathcal F(\varphi_t) g_t &= \left(\left.\frac{\partial}{\partial t}\right|_{t=0} \mathcal F(\varphi_t) \right)g + 2\mathcal F(\varphi)h,\\
&=\left(\int_M \langle h,P_1(\varphi)\rangle d\mu_g -7c^2 f\right) g,\\
&=\left(-\frac{5c^2}{2}\int_M  7f d\mu_g -7c^2 f\right) g\\
&=-7c^2 f g.
\end{align*}
Finally, if $\eta$ is any smooth function on $M$,
\begin{align*}
\int_M \left(\langle K_1(fg,0), \eta g\rangle +\langle K_2(fg,0),0\rangle \right)d\mu_g &= \int_M \langle (-\Delta f +5c^2 f)g +\frac{1}{2}\mathcal L_{\nabla f} g, \eta g\rangle d\mu_g\\
&=\int_M (-6\Delta f + 35c^2 f)\eta d\mu_g,
\end{align*}
proves \eqref{eqn:conformal_operator}.
\end{proof}

Given a $G_2$-structure $\varphi$ inducing a Riemannian metric $g$ on $M$, let $[\varphi]$ denote the conformal class of $\varphi$, namely
$$[\varphi]=\{\tilde\varphi=e^{3f} \varphi, f\in C^\infty(M)\}.$$
Recall from \cite{flows2} that $\tilde\varphi=e^{3f}\varphi$ induces the Riemannian metric $\tilde g=e^{2f} g$ on $M$.

By Proposition \ref{prop:conformal_2var}, if $\varphi$ is a torsion-free or nearly-$G_2$ then it is a local minimum of the restriction $\tilde{\mathcal{F}}:[\varphi]\rightarrow \mathbb R$ of $\tilde{\mathcal F}$ in $[\varphi]$. 

\begin{proposition}\label{prop:F_conformal}
In the setting described above, if $\varphi$ is a torsion-free $G_2$-structure, it is the unique minimum $\mathcal F$ (and $\tilde{\mathcal F}$) in its conformal class $[\varphi]$.
\end{proposition}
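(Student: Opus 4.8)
The plan is to show that the restriction of $\mathcal F$ to $[\varphi]$ is nonnegative and vanishes precisely at constant rescalings of $\varphi$. Since $\varphi$ is torsion free we have $T=0$, so \eqref{eqn:F_second_form} gives $\mathcal F(\varphi)=0$; the goal is therefore to prove $\mathcal F(\tilde\varphi)\ge 0$ for every $\tilde\varphi=e^{3f}\varphi\in[\varphi]$, with equality if and only if $f$ is constant. The crucial structural input is that a conformal deformation of a torsion-free structure carries only \emph{vector} torsion, i.e. $\tilde T\in\Omega^2_7$.

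To establish this I would follow the conformal path $\varphi_t=e^{3tf}\varphi$, $t\in[0,1]$, which (as in the setup of Proposition \ref{prop:conformal_2var}, using $g_t\diamond_t\varphi_t=3\varphi_t$) corresponds at every time to the infinitesimal variation $(h,X)=(fg_t,0)$. Feeding $h=fg_t$, $X=0$ into the torsion variation formula \eqref{eqn:torsion_variation} yields, at each $t$,
$$\frac{\partial}{\partial t}T_{ij}=f\,T_{ij}+\nabla_a f\,\varphi_{aij}=f\,T_{ij}+\curl(fg_t)_{ij},$$
where $\nabla$, $\curl$ and $\varphi$ are taken with respect to $g_t$ and $\varphi_t$. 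The inhomogeneous term $\curl(fg_t)$ is antisymmetric of the form $(\nabla f)_a\varphi_{aij}$, hence lies in $\Omega^2_7$, while the homogeneous term $f\,T$ is a pointwise scalar multiple of $T$ and so preserves the pointwise decomposition $\Omega^2=\Omega^2_{14}\oplus\Omega^2_7$ together with the symmetric part. This splitting is conformally invariant, hence $t$-independent, so decomposing $T$ accordingly the symmetric and $\Omega^2_{14}$ components solve a homogeneous linear ODE with zero initial data and therefore vanish identically. Thus $T_t\in\Omega^2_7$ for all $t$, and in particular $\tilde T=T_1\in\Omega^2_7$. (Equivalently, one may compute $d\tilde\varphi=3e^{3f}df\wedge\varphi$ and $d\tilde\psi=4e^{4f}df\wedge\psi$ and read off that only the vector torsion class, equal to $df$, survives.)

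Once $\tilde T\in\Omega^2_7$, the identity $\alpha=\tfrac16\Vop(\alpha)\lrcorner\varphi$ for $\alpha\in\Omega^2_7$ (Subsection on the decomposition of forms) gives $|\tilde T|^2=\tfrac16|\Vop\tilde T|^2$. Substituting into \eqref{eqn:F_second_form},
$$\mathcal F(\tilde\varphi)=\int_M\Big(-\tfrac12|\tilde T|^2+\tfrac16|\Vop\tilde T|^2\Big)\,d\mu_{\tilde g}=\Big(-\tfrac1{12}+\tfrac16\Big)\int_M|\Vop\tilde T|^2\,d\mu_{\tilde g}=\frac1{12}\int_M|\Vop\tilde T|^2\,d\mu_{\tilde g}\ge 0.$$
Hence $\mathcal F(\tilde\varphi)\ge 0=\mathcal F(\varphi)$, with equality if and only if $\Vop\tilde T\equiv 0$; since $\tilde T\in\Omega^2_7$ this forces $\tilde T=0$, i.e. $\tilde\varphi$ is torsion free, which happens exactly when $f$ is constant. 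The minimizers of $\mathcal F|_{[\varphi]}$ are therefore precisely the constant rescalings $e^{3c}\varphi$ of $\varphi$, all torsion free. For the normalized functional, $\tilde{\mathcal F}=\vol(g_\varphi)^{-5/7}\mathcal F$ with $\vol>0$ by \eqref{volumenormalizedF}, so the same conclusion holds; since $\tilde{\mathcal F}$ is scale invariant these rescalings represent a single point, and in this sense $\varphi$ is the unique minimum of $\tilde{\mathcal F}$ in $[\varphi]$.

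The main obstacle is the first step: rigorously pinning down that the conformally deformed torsion is of pure vector type, $\tilde T\in\Omega^2_7$. This is where the geometry enters, and it must be handled carefully because the pointwise decomposition and the operators $\nabla$, $\curl$, $\Vop$ appearing in the evolution equation all a priori depend on $t$; the key simplifications are that the $\Omega^2_7/\Omega^2_{14}$ splitting is conformally invariant and that the driving term $\curl(fg_t)$ remains in $\Omega^2_7$. Once this is secured, the sign of $\mathcal F$ follows immediately from the single numerical identity $|\tilde T|^2=\tfrac16|\Vop\tilde T|^2$ valid on $\Omega^2_7$.
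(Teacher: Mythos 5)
Your proof is correct, and it reaches the same key intermediate object as the paper but extracts positivity by a genuinely different mechanism. The paper quotes the conformal transformation law $\tilde T_{pq}=e^{f}(T_{pq}+\nabla_m f\,\varphi_{mpq})$ (Proposition \ref{prop:conformal_change}, stated immediately before its proof), substitutes the resulting formulas for $|\tilde T|^2$ and $|\tilde\Vop\tilde T|^2$ into \eqref{eqn:F_second_form}, integrates by parts, and lands on the Yamabe-type expression $\mathcal F(v)=\int_M\bigl(\tfrac{12}{25}|\nabla v|^2+\tfrac15\tr P_1\bigr)v^2\,d\mu$, which is manifestly nonnegative when $\tr P_1=0$. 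You instead observe that in the torsion-free case the conformally deformed torsion is of pure vector type, $\tilde T\in\Omega^2_7$, so the integrand of \eqref{eqn:F_second_form} is \emph{pointwise} equal to $\tfrac1{12}|\Vop\tilde T|^2\ge 0$ via the algebraic identity $|\alpha|^2=\tfrac16|\Vop\alpha|^2$ on $\Omega^2_7$; no integration by parts is needed, and the equality case ($\Vop\tilde T=0\Rightarrow\tilde T=0\Rightarrow\nabla f=0$) is immediate. The two computations agree (one checks $\tfrac1{12}\int|\tilde\Vop\tilde T|^2_{\tilde g}\,d\mu_{\tilde g}=3\int|\nabla f|^2e^{5f}\,d\mu$), so the trade-off is: your route is more structural and self-contained in the torsion-free case, while the paper's yields the general formula valid for arbitrary $\varphi$ in the conformal class, which is what lets them tie the discussion back to $\tr P_1$. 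Your ODE derivation of $\tilde T\in\Omega^2_7$ is sound — the $\Omega^2_7\oplus\Omega^2_{14}$ splitting and the symmetric/antisymmetric splitting are indeed conformally invariant, and the source term $\nabla_a f\,\varphi_{aij}$ lies in $\Omega^2_7$ — but it is more work than necessary: Proposition \ref{prop:conformal_change} gives $\tilde T=e^{f}(\nabla f)\lrcorner\varphi\in\Omega^2_7$ in one line, and your parenthetical remark via $d\tilde\varphi$, $d\tilde\psi$ would also do. Your handling of uniqueness (minimizers are exactly the constant rescalings, a single point for the scale-invariant $\tilde{\mathcal F}$) matches what the paper's proof actually establishes.
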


To analyze in more detail the behaviour of $\tilde{\mathcal F}$ within a given conformal class $[\varphi]$, recall the following result from \cite{flows2}.
\begin{proposition}[Proposition 2.156 in \cite{flows2}]\label{prop:conformal_change}
Given a $G_2$-structure $\varphi$ and any $f\in C^\infty(M)$, let $\tilde \varphi=e^{3f}\varphi$. Then the torsion $\tilde T$ of $\tilde \varphi$ is given by
$$\tilde T_{pq}=e^f (T_{pq}+\nabla_m f \varphi_{mpq}).$$
\end{proposition}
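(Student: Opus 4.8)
The plan is to compute $\tilde T$ directly from its defining formula $\tilde T_{pq}=\frac{1}{24}\tilde\nabla_p\tilde\varphi_{ijk}\tilde\psi_{ijkq}$, using the conformal transformation laws of the data attached to $\varphi$. First I would record the three scaling laws: $\tilde\varphi=e^{3f}\varphi$ is given, the induced metric scales by $\tilde g=e^{2f}g$ (recalled just before the statement), and the dual $4$-form scales by $\tilde\psi=e^{4f}\psi$. The last law follows from the standard conformal behaviour of the Hodge star on an $n$-manifold, $\tilde *=e^{(n-2k)f}*$ on $k$-forms; for $n=7$, $k=3$ this gives $\tilde *=e^f*$, hence $\tilde\psi=\tilde *\tilde\varphi=e^f*(e^{3f}\varphi)=e^{4f}\psi$. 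It is important to keep track of the fact that the contraction in the torsion formula is taken with $\tilde g^{-1}=e^{-2f}g^{-1}$, so that the three inverse metrics needed to contract the indices $i,j,k$ contribute a factor $e^{-6f}$; combined with the $e^{3f}$ from $\tilde\varphi$ and the $e^{4f}$ from $\tilde\psi$, the overall prefactor is exactly $e^{f}$, which already accounts for the leading $e^f$ in the claimed formula.

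Next I would expand $\tilde\nabla_p\tilde\varphi_{ijk}$ using the standard relation between the two Levi--Civita connections, $\tilde\Gamma^m_{ij}=\Gamma^m_{ij}+\delta^m_i\nabla_j f+\delta^m_j\nabla_i f-g_{ij}\nabla^m f$. Differentiating $\tilde\varphi_{ijk}=e^{3f}\varphi_{ijk}$ and applying the three Christoffel corrections, one index at a time, produces a term $3e^{3f}(\nabla_p f)\varphi_{ijk}$ from the conformal factor together with three contributions of the form $(\nabla_p f)\varphi_{ijk}$ coming from the $\delta^m_j\nabla_i f$-type pieces. The key structural point is that these cancel exactly, leaving
\begin{equation*}
\tilde\nabla_p\tilde\varphi_{ijk}=e^{3f}\Big(\nabla_p\varphi_{ijk}-\nabla_i f\,\varphi_{pjk}-\nabla_j f\,\varphi_{ipk}-\nabla_k f\,\varphi_{ijp}+g_{pi}\nabla^m f\,\varphi_{mjk}+g_{pj}\nabla^m f\,\varphi_{imk}+g_{pk}\nabla^m f\,\varphi_{ijm}\Big).
\end{equation*}

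Finally I would contract this with $\tilde\psi_{ijkq}$ and simplify using the $G_2$ contraction identities of Section 2. The term $\nabla_p\varphi_{ijk}\psi_{ijkq}$ reproduces $24\,T_{pq}$ by the definition of the torsion. In each of the two remaining groups of three terms, the total antisymmetry of $\varphi$ and $\psi$ makes the three summands equal after relabelling the dummy indices, so each group reduces to a single contraction of the shape $\nabla_m f\,\varphi_{ajk}\psi_{bjkq}$; applying the identity $\varphi_{ajk}\psi_{bcjk}=-4\varphi_{abc}$ from \eqref{eqn:g2identities_3} then collapses both groups into multiples of $\nabla_m f\,\varphi_{mpq}$. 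Assembling these pieces together with the overall $e^f$ prefactor yields $\tilde T_{pq}=e^{f}\big(T_{pq}+\nabla_m f\,\varphi_{mpq}\big)$. The main obstacle is purely bookkeeping: one must verify the exact cancellation of the pure-conformal $(\nabla_p f)\varphi_{ijk}$ term and track every power of $e^f$ coming from the inverse-metric contractions, while the signs of the surviving $\nabla f\,\varphi$ contributions are pinned down by the orientation and contraction conventions fixed in Section 2. A useful consistency check is that $\nabla_m f\,\varphi_{mpq}$ is a first-order $\Vop$-type correction and that $T$ is preserved precisely when $f$ is constant, exactly as the appearance of $\nabla f$ predicts.
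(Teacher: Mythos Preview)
Your approach is correct and complete in outline: the scaling laws $\tilde g=e^{2f}g$, $\tilde\psi=e^{4f}\psi$, the conformal Christoffel formula, the exact cancellation of the $3\nabla_pf\,\varphi_{ijk}$ term against the three $\delta^m_i\nabla_pf$-type contributions, and the reduction of each surviving three-term group to a single $\varphi\psi$-contraction via the identity $\varphi_{ijk}\psi_{abjk}=-4\varphi_{iab}$ are all exactly the right ingredients. The power-counting $e^{-6f}\cdot e^{3f}\cdot e^{4f}=e^{f}$ is also correct.

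Note, however, that the paper does not prove this proposition at all: it is simply quoted from \cite{flows2}. So there is no ``paper's own proof'' to compare against. Your direct computation is the standard way to establish the result; an alternative (and a useful independent check) is to differentiate the family $\varphi_t=e^{3tf}\varphi$ at $t=0$ using the variation formula \eqref{eqn:torsion_variation} with $h=fg$, $X=0$, which immediately gives $\left.\tfrac{d}{dt}\right|_{t=0}T_{ij}=fT_{ij}+\nabla_mf\,\varphi_{mij}$ and fixes the sign of the $\nabla f$-term without having to chase index permutations. One caveat on your bookkeeping: be careful which slot convention you use for $T_{pq}=\tfrac{1}{24}\nabla_p\varphi_{ijk}\psi_{\bullet\bullet\bullet\bullet}$, since the sign of the $\nabla_mf\,\varphi_{mpq}$ term flips depending on whether $q$ sits in the first or last slot of $\psi$; matching the defining relation $\nabla_p\varphi_{ijk}=T_{pl}\psi_{lijk}$ forces $T_{pq}=\tfrac{1}{24}\nabla_p\varphi_{ijk}\psi_{qijk}$, and with that convention your computation yields the $+$ sign as stated.
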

A direct corollary of Proposition \ref{prop:conformal_change} is the following.
\begin{corollary}
Under the assumptions of Proposition \ref{prop:conformal_change}, the following identities hold.
\begin{align*}
|\tilde T|_{\tilde g}^2 &= e^{-2f} \left(|T|_g^2+2\langle \Vop T,\nabla f\rangle_g + 6|\nabla f|_g^2 \right),\\
\tilde \Vop \tilde T &= \Vop T+6\nabla f,\\
|\tilde\Vop \tilde T|_{\tilde g}^2 &= e^{-2f} \left( |\Vop T|^2 +12\langle \Vop T,\nabla f\rangle_g +36 |\nabla f|_g^2\right). 
\end{align*}
\end{corollary}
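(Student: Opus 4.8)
The plan is to derive all three identities directly from Proposition \ref{prop:conformal_change}, which gives $\tilde T_{pq}=e^f(T_{pq}+\nabla_m f\,\varphi_{mpq})$, together with the conformal scaling $\tilde g=e^{2f}g$ (so that $\tilde g^{-1}=e^{-2f}g^{-1}$) and the single contraction identity $\varphi_{ipq}\varphi_{jpq}=6g_{ij}$ from \eqref{eqn:g2identities_1}. Since all norms and the operator $\tilde\Vop$ are computed with respect to $\tilde g$ and the dual form $\tilde\varphi=e^{3f}\varphi$, the whole computation reduces to bookkeeping of the conformal weights.

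First I would prove the middle identity $\tilde\Vop\tilde T=\Vop T+6\nabla f$, since the third identity is then immediate. By definition $\tilde\Vop\tilde T_k=\tilde g^{ip}\tilde g^{jq}\tilde T_{pq}\tilde\varphi_{ijk}$. Substituting $\tilde g^{-1}=e^{-2f}g^{-1}$, $\tilde\varphi=e^{3f}\varphi$, and the formula for $\tilde T$, the conformal factors collect as $e^{-4f}\cdot e^{f}\cdot e^{3f}=1$, leaving $(T_{pq}+\nabla_m f\,\varphi_{mpq})\varphi_{pqk}$ with indices raised by $g$. The first term is $\Vop T_k$ by definition of $\Vop$, while the cyclic symmetry $\varphi_{mpq}=\varphi_{pqm}$ together with $\varphi_{ipq}\varphi_{jpq}=6g_{ij}$ turns the second into $\nabla_m f\,\varphi_{mpq}\varphi_{kpq}=6\,\nabla_k f$.

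The first identity is obtained independently. Writing $|\tilde T|_{\tilde g}^2=\tilde g^{pa}\tilde g^{qb}\tilde T_{pq}\tilde T_{ab}$ and substituting, the factors combine to $e^{-4f}\cdot e^{2f}=e^{-2f}$, and expanding the product $(T_{pq}+\nabla_m f\,\varphi_{mpq})(T_{ab}+\nabla_n f\,\varphi_{nab})$ contracted with $g^{pa}g^{qb}$ produces exactly three terms: the torsion term $T_{pq}T_{pq}=|T|_g^2$; the cross term $2\,\nabla_m f\,T_{pq}\varphi_{pqm}=2\langle\Vop T,\nabla f\rangle_g$; and the pure-gradient term $\nabla_m f\,\nabla_n f\,\varphi_{mpq}\varphi_{npq}=6|\nabla f|_g^2$, the last again by the $6g_{ij}$ contraction. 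This yields the stated formula for $|\tilde T|_{\tilde g}^2$. Finally, the third identity follows by taking the $\tilde g$-norm of the second: $|\tilde\Vop\tilde T|_{\tilde g}^2=\tilde g^{kl}(\Vop T_k+6\nabla_k f)(\Vop T_l+6\nabla_l f)=e^{-2f}(|\Vop T|_g^2+12\langle\Vop T,\nabla f\rangle_g+36|\nabla f|_g^2)$, the single factor $e^{-2f}$ coming from $\tilde g^{-1}$.

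There is no real obstacle; the only delicate point is the conformal-weight bookkeeping, above all confirming that in $\tilde\Vop\tilde T$ the three factors $e^{3f}$ (from $\tilde\varphi$), $e^{f}$ (from $\tilde T$) and $e^{-4f}$ (from raising two indices with $\tilde g$) cancel exactly, so that the weight drops out entirely and only the conformal rescaling of the norm by $e^{-2f}$ survives in the first and third identities.
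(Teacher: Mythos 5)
Your computation is correct and is exactly the direct verification the paper intends: the paper states this as an immediate corollary of Proposition \ref{prop:conformal_change} without writing out a proof, and your bookkeeping of the conformal weights ($e^{-4f}\cdot e^{f}\cdot e^{3f}=1$ for $\tilde\Vop\tilde T$, $e^{-2f}$ for the two norms) together with the contraction $\varphi_{ipq}\varphi_{jpq}=6g_{ij}$ and the cyclic symmetry of $\varphi$ is precisely what is needed.
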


\begin{proof}[Proof of Proposition \ref{prop:F_conformal}]
Using \eqref{eqn:F_second_form}, we immediately see that $\mathcal F(\varphi)$ can be expressed in terms of the data $\varphi$, $g$ and $f$ as
\begin{equation*}
\begin{aligned}
\mathcal F(\tilde \varphi)=\mathcal F(f) &= \int_M \left(3|\nabla f|^2+\langle \Vop T,\nabla f\rangle -\frac{1}{2}|T|^2 +\frac{1}{6} |\Vop T|^2 \right) e^{5f} d\mu,\\
&=\int_M \left[3|\nabla f|^2 +\left(-\frac{1}{5} \Div\Vop T-\frac{1}{2}|T|^2 +\frac{1}{6} |\Vop T|^2 \right) \right] e^{5f}d\mu,\\
&=\int_M \left(3|\nabla f|^2 +\frac{1}{5} \tr P_1 \right)e^{5f}d\mu,
\end{aligned}
\end{equation*}
where the second equation is obtained via integration by parts and the third equation follows from \eqref{eqn:trP1_equiv}.

 Setting $v^{4/5}=e^{2f}$, we have that $e^{5f}= v^2$, hence $\mathcal F$ can be expressed in terms of $v$ as
 \begin{equation*}
\mathcal F(v) = \int_M \left( \frac{12}{25} |\nabla v|^2+ \frac{1}{5}\tr P_1 \right) v^2 d\mu_g.
\end{equation*}

If $\varphi$ is a torsion-free $G_2$-structure, then clearly $\tr P_1=0$, and thus $\mathcal F(v)>0$ and $\tilde{\mathcal F}(v)>0$, unless $v$ is constant.

\end{proof}

\subsection{$G_2$-transverse-traceless deformations of torsion free or nearly $G_2$-structures.}
We now turn our attention to $G_2$-transverse-traceless deformations of torsion-free and nearly $G_2$-structures.

\begin{proposition} \label{prop:tt_2var}
Suppose that $\varphi$ is a $G_2$-structure on $M$ inducing the Riemannian metric $g$ and the dual $4$-form $\psi$, and its torsion satisfies $T=cg$ for some constant $c$. Consider a variation $\{\varphi_t\}_{t\in(-\epsilon,\epsilon)}$ of $G_2$-structures inducing the family $\{g_t\}_{t\in(-\epsilon,\epsilon)}$ of Riemannian metrics on $M$ such that $\varphi_0=\varphi$ and 
$$\left.\frac{\partial}{\partial t}\right|_{t=0} \varphi_t = h\diamond\varphi +X\lrcorner \psi,$$ 
where $(h,X)$ is a $G_2$-transverse-traceless deformation of $\varphi$, namely $(h,X)\in \ker \Lop \cap \ker \tr$. Then
\begin{align*}
K_1(h,X)=\left. \frac{\partial}{\partial t}\right|_{t=0} \left(P_1(\varphi_t)-\frac{5}{7}\mathcal F(\varphi_t)g_t \right) &= \Delta_L h +\frac{5c}{6}\mathcal L_X g +7c \curl(h)_\sym  -\frac{5c}{3} \Div(X) g +5c^2 h,\\
K_2(h,X)=\left. \frac{\partial}{\partial t}\right|_{t=0} P_2(\varphi_t) &=\Delta X +\frac{31c}{2}\curl(X) +9 c^2 X.
\end{align*}

The projection $\Pi(K_1(h,X), K_2(h,X))$ onto the space of $G_2$-transverse-traceless deformations of $\varphi$ is given by the  symmetric operator 
\begin{align*}
\mathcal B(h,X)= \Pi \left( \Delta_L h + 7c \curl(h)_\sym, \Delta X +\frac{49c}{3} \curl(X) +\frac{22c^2}{3}X \right),
\end{align*}
which has infinitely many negative eigenvalues.
\end{proposition}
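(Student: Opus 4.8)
The plan is to specialize the general second-variation formulas \eqref{eqn:K12} to a $G_2$-transverse-traceless pair $(h,X)\in\ker\Lop\cap\ker\tr$ and then to strip off the gauge part that the projection $\Pi$ discards. First I would record what the constraints say when $T=cg$: since $\Pop(T)_{ij}=c\,g_{ab}\psi_{abij}=0$ and $T(X)=cX$, the condition $\Lop(h,X)=0$ becomes $\Div h=cX-\tfrac12\curl X$, while $\tr h=0$ forces $\tilde B(h,X)=\Lop\!\left(h-\tfrac{\tr h}{4}g,X\right)=\Lop(h,X)=0$. I would also establish $\Div\Div h=c\,\Div X$ by taking one further divergence and using $\Div\curl X=\tfrac12[\nabla_k,\nabla_a]X_b\,\varphi_{abk}$ together with the vanishing curvature contraction $R_{kabm}\varphi_{abk}=0$ that follows from the first Bianchi identity, exactly as such contractions are handled in the proof of Lemma \ref{lemma:2_order}. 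Substituting $\tilde B(h,X)=0$, $\tr h=0$, $\Div h=cX-\tfrac12\curl X$ and $\Div\Div h=c\,\Div X$ into \eqref{eqn:K12} is then routine algebra and produces the stated expressions for $K_1(h,X)$ and $K_2(h,X)$.

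The computational heart is the evaluation of $\Pi(K_1,K_2)$. By Corollary \ref{decompositionwithLiederivative} the orthogonal complement of the transverse-traceless space is $\{(fg,0)\}+\mathrm{Im}\,\Lop^*$, and $\Pi$ annihilates this subspace; recall that here $\Lop^*(V)=\left(-\tfrac12\mathcal L_V g,\ \tfrac12\curl V-cV\right)$ since $V\lrcorner T=cV$. It therefore suffices to show that the difference between $(K_1,K_2)$ and the pair $\left(\Delta_L h+7c\,\curl(h)_\sym,\ \Delta X+\tfrac{49c}{3}\curl X+\tfrac{22c^2}{3}X\right)$ lies in $\{(fg,0)\}+\mathrm{Im}\,\Lop^*$. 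I would do this by reading off the candidate $1$-form $V$ from the $1$-form slot of that difference and then verifying the symmetric-tensor slot, the point being that the trace-free part of $-\tfrac12\mathcal L_V g$ is available to absorb the leftover tensor $5c^2h$; this is legitimate precisely because the transverse-traceless constraint couples $h$ and $X$, so $(5c^2h,0)$ is itself not transverse-traceless and can be reabsorbed into the gauge directions once $\Div h$ is eliminated via $\Div h=cX-\tfrac12\curl X$. Getting this bookkeeping to close exactly, so that every surviving term lands in $\{(fg,0)\}+\mathrm{Im}\,\Lop^*$, is the main obstacle of the proof.

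Symmetry of $\mathcal B$ is then immediate: since $(t,s)\mapsto\tilde{\mathcal F}(\varphi_{t,s})$ is smooth, the Hessian $D^2_\varphi\tilde{\mathcal F}$ is symmetric, and because $\Pi$ is the orthogonal projection onto the transverse-traceless space the induced operator $(h,X)\mapsto\mathcal B(h,X)=\Pi(K_1,K_2)$ is formally self-adjoint on that space. Moreover, for transverse-traceless $(h,X)$ one has $\langle\mathcal B(h,X),(h,X)\rangle_{L^2}=\langle\left(\Delta_L h+7c\,\curl(h)_\sym,\ \Delta X+\tfrac{49c}{3}\curl X+\tfrac{22c^2}{3}X\right),(h,X)\rangle_{L^2}$, since $(h,X)$ already lies in the image of $\Pi$; this removes the projection from the quadratic form.

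For the spectral assertion I would argue as follows. The space $\ker\Lop\cap\ker\tr$ is infinite-dimensional, and $\mathcal B$ is a second-order operator on it whose principal part comes only from $\Delta_L h$ and $\Delta X$. Here $\Delta=\nabla^a\nabla_a$, so its principal symbol is $-|\xi|^2$; the zeroth-order projection $\Pi$ acts as the identity on the transverse-traceless symbol subspace and hence leaves the principal symbol $-|\xi|^2\,\mathrm{Id}$ of $\mathcal B$ intact, so $\mathcal B$ is elliptic and has discrete spectrum of finite multiplicities accumulating only at $-\infty$. The $\curl$ and zeroth-order terms are subordinate to $\Delta$, so by the identity of the previous paragraph and integration by parts
$$\langle\mathcal B(h,X),(h,X)\rangle_{L^2}\le -\|\nabla(h,X)\|_{L^2}^2+C\|\nabla(h,X)\|_{L^2}\|(h,X)\|_{L^2}+C\|(h,X)\|_{L^2}^2\le -(1-\epsilon)\|\nabla(h,X)\|_{L^2}^2+C_\epsilon\|(h,X)\|_{L^2}^2,$$
which is bounded above. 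A bounded-above quadratic form together with discrete spectrum accumulating at $-\infty$ forces all but finitely many eigenvalues to be negative, by the min-max principle; hence $\mathcal B$ has infinitely many negative eigenvalues. Combined with the strict positivity on conformal directions from Proposition \ref{prop:conformal_2var} and the vanishing on gauge directions, this shows $\varphi$ is a saddle point of $\tilde{\mathcal F}$.
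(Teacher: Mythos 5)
Your derivation of $K_1$ and $K_2$ (impose $\tr h=0$, $\tilde B(h,X)=\Lop(h,X)=0$, $\Div h=cX-\tfrac12\curl X$, $\Div\Div h=c\,\Div X$, substitute into \eqref{eqn:K12}) and your overall strategy for the projection (exhibit the difference between $(K_1,K_2)$ and the target pair as an element of $\{(fg,0)\}+\mathrm{Im}\,\Lop^*$, the orthogonal complement of $\ker\Lop\cap\ker\tr$) are the same as the paper's, which pairs $(K_1,K_2)$ against an arbitrary transverse-traceless $(w,Y)$ and peels off $\Kop\bigl(-\tfrac{5c}{3}X\bigr)$ together with a multiple of $g$. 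The genuine gap is your treatment of the term $5c^2h$. Reading the gauge field off the $1$-form slot essentially forces $V=-\tfrac{5c}{3}X$, since $K_2-\bigl(\Delta X+\tfrac{49c}{3}\curl(X)+\tfrac{22c^2}{3}X\bigr)=-\tfrac{5c}{6}\curl(X)+\tfrac{5c^2}{3}X$ is the second slot of $\Lop^*\bigl(-\tfrac{5c}{3}X\bigr)$; the first slot of $\Lop^*\bigl(-\tfrac{5c}{3}X\bigr)$ is then exactly $\tfrac{5c}{6}\mathcal L_X g$, which accounts for the Lie-derivative term in $K_1$ and nothing more. After discarding pure-trace pieces, the leftover $(5c^2h,0)$ would have to lie in $\{(fg,0)\}+\mathrm{Im}\,\Lop^*$ on its own, and it does not: your justification --- that $(5c^2h,0)$ is not transverse-traceless and hence "can be reabsorbed into the gauge directions" --- is a non sequitur, since failing to lie in a subspace does not place a vector in its orthogonal complement, and indeed $\langle\Pi(5c^2h,0),(h,X)\rangle_{L^2}=5c^2\int_M|h|^2\,d\mu\neq0$ when $c\neq0$, $h\neq0$. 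The paper's proof does not absorb this term: it arrives at $\mathcal B(h,X)=\Pi\bigl(\Delta_L h+7c\curl(h)_\sym+5c^2h,\ \Delta X+\tfrac{49c}{3}\curl(X)+\tfrac{22c^2}{3}X\bigr)$ with the $5c^2h$ retained inside the projection (the displayed statement drops it, but the computation in the proof keeps it, and keeping it is what the algebra forces). So this step of your argument cannot be made to close for nearly-$G_2$ structures; for $c=0$ the issue is vacuous. A second arithmetic point: substituting $\Div h=cX-\tfrac12\curl X$ into \eqref{eqn:K12} gives the $\curl(X)$-coefficient $\tfrac{3c}{2}+\tfrac{11c}{3}=\tfrac{31c}{6}$, so "routine algebra" does not literally reproduce the displayed $\tfrac{31c}{2}$; you need to actually verify the coefficients rather than assert agreement.

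On the spectral assertion, your conclusion is right but the mechanism needs care: $\Pi=\mathrm{Id}-\Kop\circ(\Lop\circ\Kop)^{-1}\circ\Lop$ (minus the trace projection) is a nonlocal operator, not a zeroth-order local one, so "the projection leaves the principal symbol intact, hence $\mathcal B$ is elliptic with discrete spectrum" is not a complete argument. The paper sidesteps this by studying the symmetric bilinear form $\mathcal E((h,X),(w,Y))=\int_M\langle\mathcal B(h,X),(w,Y)\rangle\,d\mu$ on the Hilbert space of $H^1$ transverse-traceless deformations, proving boundedness and a G\aa rding inequality, and deducing a real spectrum $\lambda_i\to-\infty$ with smooth eigenfunctions via elliptic bootstrapping of the equation $\mathcal L(h,X)-\Kop\circ(\Lop\circ\Kop)^{-1}(\Lop(h,X))=\lambda(h,X)$. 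Your upper energy estimate is precisely the bound needed for that route, so if you replace the symbol argument by the quadratic-form argument (and note that $\ker\Lop\cap\ker\tr$ is infinite dimensional), the claim of infinitely many negative eigenvalues follows as in the paper.
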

\begin{proof}
Consider a $G_2$-transverse-traceless variation $(h,X)$ of $\varphi$, namely assume that 
\begin{align*}
\Lop(h,X)&=\Div h + \frac{1}{2}\curl  X -cX=0\\
\tr h &=0.
\end{align*}
 It follows that $\tilde B(h,X) = \Lop(h,X) =0$. Therefore, $\Div(h)=-\frac{1}{2}\curl(X) + c X$, hence
\begin{align*}
\Div(\Div(h))=&\nabla_a \left(-\frac{1}{2}\curl(X)_a+cX_a\right),\\
&=-\frac{1}{2}\nabla_a(\nabla_b X_c \varphi_{bca}) +c\Div(X),\\
&=-\frac{1}{2} \nabla_a\nabla_b X_c \varphi_{bca} -\frac{1}{2} \nabla_b X_c T_{bm}\psi_{mbca} + c\Div(X),\\
&=c\Div(X).
\end{align*}

Thus,
\begin{align*}
\left.\frac{\partial}{\partial t}\right|_{t=0} P_1 &=\Delta_L h -\frac{2}{3}\mathcal L_{\tilde B(h,X)} g   +\frac{1}{3}\left( -\Delta \tr h +\Div(\Div(h)) \right) g
\\
&+\frac{5c}{6}\mathcal L_X g + 7c \curl(h)_\sym-2c\Div(X) g,\\
&=\Delta_L h  +\frac{c}{3} \Div(X) g +\frac{5c}{6}\mathcal L_X g +7c \curl(h)_\sym -2c\Div(X) g,\\
&=\Delta_L h +\frac{5c}{6}\mathcal L_X g +7c \curl(h)_\sym  -\frac{5c}{3} \Div(X) g,
\end{align*}
and
\begin{align*}
\left.\frac{\partial}{\partial t}\right|_{t=0} P_2 &=\Delta X_k +\frac{2}{3} \curl(\tilde B(h,X))_k - 3 c\Div h +2c \nabla \tr h +\frac{11c}{3} \curl(X) +12 c^2 X,\\
&=\Delta X -3c \Div h +\frac{11c}{3}\curl(X) +12 c^2 X,\\
&=\Delta X -3c \left( -\frac{1}{2}\curl(X) + c X \right)+\frac{11c}{3} \curl (X) +12 c^2 X,\\
&=\Delta X +\left( \frac{3c}{2}+\frac{11c}{3} \right) \curl(X) + 9 c^2 X,\\
&=\Delta X+\frac{31c}{6} \curl(X) +9 c^2 X.
\end{align*}
The expressions for $K_1$ and $K_2$ then follow from
\begin{align*}
\left.\frac{\partial}{\partial t}\right|_{t=0} \mathcal F(\varphi_t) g_t &= \left(\left.\frac{\partial}{\partial t}\right|_{t=0} \mathcal F(\varphi_t) \right)g + 2\mathcal F(\varphi)h,\\
&=\left(\int_M \langle h,P_1(\varphi)\rangle d\mu_g -7c^2 f\right) g,\\
&=\left(-\frac{5c^2}{2}\int_M   \tr hd\mu_g -7c^2 f\right) g\\
&=-7c^2 f g.
\end{align*}

Now, suppose further that $(w,Y)$ satisfies $\tr w=0$ and $L(w,Y)=0$, and $\Pi: \mathcal S^2\times \Omega^1 \rightarrow \ker \Lop \cap \ker \tr$ is the $L^2$-othrogonal projection.

We then have
\begin{align*}
&\int_M \langle \Pi (K_1(h,X),K_2(h,X)), (w,Y)\rangle d\mu=\\
&=\int_M \langle (K_1(h,X),K_2(h,X)), (w,Y)\rangle d\mu=\\
&=\int_M \langle \Delta_L h +7c \curl(h)_\sym +\frac{5c}{6}\mathcal L_X g   -\frac{5c}{3} \Div(X) g + 5c^2 h, w\rangle + \langle \Delta X +\frac{31c}{2}\curl(X) +9 c^2 X ,Y \rangle d\mu\\
&=\int_M \langle \Delta_L h +7c \curl(h)_\sym +\frac{5c}{3}\left(\frac{1}{2}\mathcal L_X g   - \frac{1}{7}\Div(X) g\right)+5c^2h, w\rangle d\mu\\
&+ \int_M\langle \Delta X +\left(\frac{31c}{2} +\frac{5c}{6}\right)\curl(X) +\left(9 c^2-\frac{5c^2}{3}\right) X +\frac{5c}{3}\left(-\frac{1}{2}\curl(X) + c X\right),Y \rangle d\mu\\
&=\int_M \langle \Delta_L h +7c \curl(h)_\sym+5c^2h,w\rangle + \langle \Delta X +\frac{49c}{3} \curl(X) +\frac{22c^2}{3} X,Y\rangle d\mu\\
&+\int_M \langle \Kop \left(-\frac{5c}{3} X\right), (w,Y)\rangle d\mu.
\end{align*}
Since $\mathrm{Im}\Kop$ is $\langle\cdot,\cdot\rangle_{L^2}$-orthogonal to $\ker L\cap \ker \tr$, and the operator $\curl$ acting on $2$-tensors and $1$-forms is symmetric when $T=cg$, we obtain that
\begin{align*}
&\int_M \langle \Pi (K_1(h,X),K_2(h,X)), (w,Y)\rangle d\mu=\\
&=\int_M \langle \Delta_L h +7c \curl(h)_\sym+5c^2 h,w\rangle + \langle \Delta X +\frac{49c}{3} \curl(X) +\frac{22c^2}{3} X,Y\rangle d\mu,\\
&=\int_M \langle h, \Delta_L w +7c \curl(w)_\sym +5c^2 w\rangle + \langle X, \Delta Y +\frac{49c}{3} \curl(Y) +\frac{22c^2}{3} Y\rangle d\mu,\\
&=\int_M \langle (h,X),\Pi( K_1(w,Y), K_2(w,Y) )d\mu.
\end{align*}
Thus 
$$\mathcal B(h,X):=\Pi\left(\Delta_L h + 7c \curl(h)_\sym+5c^2 h, \Delta Y+\frac{49c}{3} \curl(Y) +\frac{22c^2}{3} Y\right)=\Pi(K_1(h,X),K_2(h,X)),$$
and $\mathcal B$ is a symmetric operator.

Now, define the symmetric bilinear form 
\begin{align*}
&\mathcal E((h,X), (w,Y) ) =\\
&= \int_M \langle \nabla h,\nabla w\rangle -\langle \riem(h),w\rangle - 7c\langle \curl(h)_\sym, w\rangle d\mu\\
&+\int_M \langle \nabla X,\nabla Y\rangle -\frac{49c}{3} \langle\curl(X),Y\rangle - \frac{22c^2}{3} \langle X,Y\rangle  d\mu
\end{align*}
defined on the Hilbert space of $H^1$ $G_2$-transverse-traceless variations $(h,X)$ of $\varphi$.  Clearly, for smooth variations, integrating by parts we obtain that
$$\mathcal E((h,X),(w,Y)) = \int_M \langle \mathcal B(h,X), (w,Y)\rangle d\mu.$$

It easy to verify that $\mathcal E$ satisfies the energy estimates
\begin{equation*}
|\mathcal E((h,X), (w,Y) )| \leq C || (h,X) ||_{H^1} ||(w,Y)||_{H^1},
\end{equation*}
and 
\begin{equation*}
\beta ||(h,X)||_{H^1}^2 \leq \mathcal E((h,X),(h,X)) +\alpha ||(h,X)||_{L^2},
\end{equation*}
for appropriate constants $\alpha,\beta, C$. This suffices to conclude that $\mathcal B$ has real spectrum consisting of a sequence $\lambda_i\rightarrow -\infty$ such that $\mathcal B(h,X)=\lambda_i (h,X)$ has a non-trivial solution in $H^1$, see for instance \cite{evans}. In particular there are only finitely many negative eigenvalues.

Let 
$$\mathcal L(h,X)=\left(\Delta_L h+7c \curl(h)_\sym, \Delta X +\frac{49c}{3} \curl(X) +\frac{22c^2}{3} X\right).$$
By the proof of Proposition \ref{decompositionofG_2structures}
$$\Pi(h,X) =(h,X) - \Kop\circ (\Lop\circ \Kop)^{-1}(\Lop(h,X)).$$

Therefore, the eigenvalue equation 
$\mathcal B(h,X)=\Pi\circ \mathcal L(h,X)=\lambda(h,X)$ becomes
\begin{equation}\label{eqn:eigen}
\mathcal L(h,X) - \Kop\circ (\Lop\circ \Kop)^{-1}(\Lop(h,X)) = \lambda (h,X).
\end{equation}
Since $(h,X)\in H^1$, it follows $\Lop (h,X)\in L^2$. Therefore, since $\Lop\circ \Kop$ is elliptic, by Proposition \ref{decompositionofG_2structures},  elliptic regularity implies that
$$(\Lop\circ\Kop)^{-1}(\Lop(h,X)) \in H^2 \Longrightarrow \Kop\circ (\Lop\circ\Kop)^{-1}(\Lop(h,X)) \in H^1. $$
Since $\mathcal L$ is also elliptic, applying elliptic regularity in  \eqref{eqn:eigen} we obtain that $(h,X)\in H^3$. Bootstrapping the above argument, we eventually obtain that $(h,X)$ is $C^\infty$, and thus the eigenfunctions of $\mathcal B$, in the subspace of $G_2$-transverse-traceless deformations of $\varphi$ are smooth.

 Therefore, the second variation of $\tilde{\mathcal F}$ satisfies 
$$D^2\tilde{\mathcal F}( (h,X),(h,X)) =\int_M \langle\mathcal B(h,X),(h,X)\rangle d\mu \leq 0$$
for all $G_2$-transverse-traceless $(h,X)$ which are $L^2$ orthogonal to a finite dimensional subspace, which corresponds to the positive eigenvalues of $\mathcal B$.

\end{proof}

\subsubsection{Static deformations of torsion-free $G_2$-structures}

Let $\{\varphi_t\}_{t\in(-\epsilon,\epsilon)}$ be a one parameter family of unit volume static $G_2$-structures on a compact $7$-manifold $M$, inducing the Riemannian metrics $\{g_t\}_{t\in (-\epsilon,\epsilon)}$, namely 
\begin{align*}
P_1(\varphi_t)&=0,\\
P_2(\varphi_t)&=0,\\
\vol(g_t)&=1,
\end{align*} for every $t\in (-\epsilon,\epsilon)$, and suppose that $\varphi=\varphi_0$ is torsion free, $g=g_0$ and $\psi=*_g \varphi$.

Let $h$ and $X$ be such that 
$$\left.\frac{\partial \varphi_t}{\partial t} \right|_{t=0} = h\diamond\varphi+X\lrcorner \psi.$$
By Proposition \ref{prop:F_conformal}, $\varphi$ is the unique static $G_2$-structure in its conformal class, while the operators $P_1,P_2$ are invariant under diffeomorphisms. Hence the only non-trivial behaviour will observed in the $G_2$-transverse-traceless directions,
$$(h,X) \in \ker L \cap \ker \tr.$$
In particular $\tilde B(h,X)=0$. Therefore, by Proposition \ref{prop:variation_P} we obtain that
\begin{align*}
\left.\frac{\partial}{\partial t}\right|_{t=0} P_1 &= \Delta_L h =\Delta h +2\riem(h)=0,\\
\left.\frac{\partial}{\partial t}\right|_{t=0} P_2 &=\Delta X=0.
\end{align*}

\begin{definition}
Let $(h,X)\in \ker L\cap \ker \tr$.
\begin{enumerate}
\item We call $(h,X)$ an infinitesimal static deformation of the torsion-free $G_2$-structure $\varphi$ if
\begin{equation*}
\Delta h + 2\riem(h)=0,\quad  \Delta X=0,
\end{equation*}
where $\riem(h)_{ij}=R_{aijb}h_{ab}$. 
\item We call $(h,X)$ an infinitesimal torsion free deformation of the torsion-free $G_2$-structure $\varphi$ if
\begin{equation*}
D_\varphi T(h,X)= \curl(h) + \nabla X=0.
\end{equation*}
\end{enumerate}
\end{definition}

\begin{remark}\label{rmk:static_imp}
If $(h,X)$ is an infinitesimal static deformation of a torsion free $G_2$-structure $\varphi$, then since $M$ is compact, $\Delta X=0$ implies that $X$ is parallel. In particular, $\curl(X)=0$ and $\Lop(h,X)=0$ implies that $\Div h=0$.
\end{remark}

\begin{remark}\label{rmk:tfree_imp}
If $(h,X)$ is an infinitesimal torsion free deformation of a torsion-free $G_2$-structure $\varphi$, contracting $\curl(h)+\nabla X=0$ with $\varphi$ and applying a contraction identity we obtain
\begin{equation}\label{eqn:t_free_d}
\nabla_a h_{bi} \varphi_{abj}\varphi_{kij} + \curl(X)_k = \nabla_k \tr h -\Div h +\curl(X)_k=0.
\end{equation}
Now, since $\tr h=0$ and $\Lop(h,X)=0$ implies that $\Div h=-\frac{1}{2}\curl(X)$, \eqref{eqn:t_free_d} implies that $\curl(X)=0$ and $\Div(h)=0$.
\end{remark}

\begin{proposition}
A pair $(h,X)\in \ker L\cap \ker \tr$ is an infinitesimal static deformation if and only if it is an infinitesimal torsion free deformation.
\end{proposition}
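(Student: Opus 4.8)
The plan is to recast the statement as a Bochner-type equivalence for the linearized torsion operator. Write $\mathcal T(h,X):=D_\varphi T(h,X)=\curl(h)+\nabla X$, which is the specialization of \eqref{eqn:torsion_variation} to the torsion-free case $T=0$. By definition $(h,X)$ is an infinitesimal torsion free deformation exactly when $\mathcal T(h,X)=0$, and it is an infinitesimal static deformation exactly when $\Delta_L h=0$ and $\Delta X=0$, where $\ric=0$ so that $\Delta_L h=\Delta h+2\riem(h)$. The goal is to prove the pointwise Bochner identity
\begin{equation*}
\mathcal T^*\mathcal T(h,X)=(-\Delta_L h,\,-\Delta X),
\end{equation*}
valid for every $(h,X)\in\ker\Lop\cap\ker\tr$ satisfying in addition $\Div h=0$ and $\curl X=0$, where $\mathcal T^*$ is the $L^2$-adjoint of $\mathcal T$. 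Once this identity is in hand, both implications follow immediately, so the entire content of the proposition is concentrated in it.

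First I would record the reductions supplied by the two preceding remarks, since they furnish exactly the side conditions the Bochner identity requires. If $(h,X)$ is an infinitesimal static deformation, Remark \ref{rmk:static_imp} gives that $X$ is parallel (hence $\curl X=0$) and that $\Div h=0$. If instead $(h,X)$ is an infinitesimal torsion free deformation, then taking the trace of $\curl(h)+\nabla X=0$ and using that $\curl(h)$ is trace-free yields $\Div X=0$, while Remark \ref{rmk:tfree_imp} gives $\curl X=0$ and $\Div h=0$. Thus in either direction the pair already satisfies $\Div h=0$ and $\curl X=0$, so the hypotheses of the Bochner identity are met.

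The computation of $\mathcal T^*\mathcal T$ is where I expect the main difficulty to lie. Integration by parts gives $\mathcal T^*(S)=\big(\mathcal C(S)_\sym,\,-\Div_1 S\big)$ for a general $2$-tensor $S$, where $\mathcal C(S)_{bi}=-\varphi_{abj}\nabla_a S_{ij}$ and $\Div_1 S_j=\nabla_i S_{ij}$. The decisive simplification is that in the symmetric slot the two copies of $\varphi$ share a common index, so the contraction identity \eqref{eqn:g2identities_1}, $\varphi_{abj}\varphi_{cdj}=g_{ac}g_{bd}-g_{ad}g_{bc}-\psi_{abcd}$, turns $\mathcal C(\curl h)$ into $-\Delta h$ plus the term $\nabla_a\nabla_b h_{ai}$ and a $\psi$-contracted second-derivative term; commuting derivatives and using $\Div h=0$ together with $\ric=0$ collapses these into curvature contractions, while $\mathcal C(\nabla X)$ produces $-\nabla(\curl X)$, which vanishes, plus further curvature terms in $X$. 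To finish I would invoke the torsion-free $G_2$ curvature identity that the curvature $2$-forms lie in $\Omega^2_{14}$, i.e. $R_{abcd}\varphi_{abe}=0$, along with the first Bianchi identity and the contraction identities \eqref{eqn:g2identities_2}--\eqref{eqn:g2identities_3}: these kill every stray $\psi$- and $\varphi$-contracted curvature term (in particular all $X$-dependent ones, since they are always contracted against $\varphi$) and reorganize the remaining curvature into exactly $-2\riem(h)$, giving $-\Delta_L h$ in the symmetric slot. The vector slot $-\Div_1(\curl h+\nabla X)$ reduces to $-\Delta X$ once $\Div h=0$ and $R_{abcd}\varphi_{abe}=0$ are used. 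Checking that every curvature term assembles into precisely $-2\riem(h)$ is the delicate point, and it is exactly here that the special structure of torsion-free $G_2$ curvature is indispensable.

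Finally I would deduce the two implications. If $\mathcal T(h,X)=0$, then $\mathcal T^*\mathcal T(h,X)=0$, so the Bochner identity forces $\Delta_L h=0$ and $\Delta X=0$ pointwise; this is the torsion free $\Rightarrow$ static direction. Conversely, if $(h,X)$ is an infinitesimal static deformation, then $-\Delta_L h=0$ and $-\Delta X=0$, hence $\mathcal T^*\mathcal T(h,X)=0$, whence
\begin{equation*}
\int_M|\mathcal T(h,X)|^2\,d\mu=\int_M\langle\mathcal T^*\mathcal T(h,X),(h,X)\rangle\,d\mu=0,
\end{equation*}
so $\mathcal T(h,X)=0$; this is the static $\Rightarrow$ torsion free direction. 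This completes the argument modulo the Bochner computation, which is the sole nontrivial step.
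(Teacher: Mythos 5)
Your proposal is correct and is essentially the paper's argument in a mildly repackaged form: the paper proves the integral identity $\int_M\langle\curl(h),\curl(w)\rangle\,d\mu=-\int_M\langle\Delta_L h,w\rangle\,d\mu$ for divergence-free $h$ using exactly the ingredients you list (the contraction identity for $\varphi_{abj}\varphi_{pqj}$, commuting derivatives with $\Div h=0$ and $\ric=0$, the first Bianchi identity, and the vanishing of $\pi_7(\riem)$ for torsion-free $\varphi$), which together with $\Delta X=-\Div\curl(h)=0$ is precisely your Bochner identity $\mathcal T^*\mathcal T(h,X)=(-\Delta_L h,-\Delta X)$. Both directions are then deduced exactly as you describe, using the side conditions supplied by the two preceding remarks.
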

\begin{proof}
Suppose that $\Div h=0$ and $w$ is any symmetric $2$-tensor. Then 
\begin{align*}
\langle \curl(h),\curl(w)\rangle&=\nabla_a h_{bi}\varphi_{abj}  \nabla_p w_{qi}\varphi_{pqj},\\
&=\nabla_a h_{bi} \nabla_p w_{qi} (g_{ap}g_{bq}-g_{aq} g_{bp}-\psi_{abpq}),\\
&=\langle\nabla h,\nabla w\rangle-\nabla_a h_{bi} \nabla_b w_{ai} -\nabla_a h_{bi}\nabla_p w_{qi}\psi_{abpq}. 
\end{align*}

Therefore, integrating by parts, we obtain 
\begin{align*}
&\int_M \langle\curl(h),\curl(w)\rangle  d\mu = \\
&=\int_M (-\Delta h_{ij} w_{ij}  +\nabla_b\nabla_a h_{bi} w_{ai} +\nabla_p \nabla_a h_{bi} w_{qi}\psi_{abpq})d\mu\\
&=\int_M \left(-\Delta h_{ij} w_{ij}  +(\nabla_a\nabla_b h_{bi} +R_{bamb} h_{mi} +R_{bami} h_{bm}) w_{ai} \right.\\
& \left.+\frac{1}{2}(R_{pamb} h_{mi} +R_{pami} h_{bm}) w_{qi}\psi_{abpq}\right)d\mu\\
&=\int_M (-\Delta h_{ij} h_{ij} +R_{bami} h_{bm} w_{ai} +\frac{1}{2} R_{mipa} h_{bm}w_{qi}\psi_{pabq} )d\mu,\\
&=\int_M (-\Delta h_{ij} - R_{aijb}h_{ab} -\frac{1}{2} R_{aikl}\psi_{kljb}h_{ab} )w_{ij} d\mu
\end{align*}

since $\nabla \psi=0$, $\ric=0$, $\Div h=0$ and $R_{pamb}\psi_{pabq}=0$ due to the first Bianchi identity.

Recall that the $\Omega^7$-projection of the Riemann tensor 
$$\pi_7(\riem)_{aijb}=\frac{1}{3}R_{aijb}-\frac{1}{6} R_{aikl}\psi_{kljb},$$
satisfies
$$\pi_7(\riem)_{aijb}\varphi_{jbm} =\nabla_a T_{im} -\nabla_i T_{am} - T_{ap}T_{iq} \varphi_{pqm}=0,$$
since $\varphi$ is torsion free, by \cite{flows1}. It follows that
$\frac{1}{2} R_{aikl}\psi_{kljb} = R_{aijb}$, hence
\begin{equation}\label{eqn:curlhw}
\int_M \langle\curl(h),\curl(w)\rangle d\mu = \int_M (-\Delta h_{ij}-2R_{aijb}h_{ab}) w_{ij} d\mu =\int_M -\Delta_L h_{ij} w_{ij} d\mu. 
\end{equation}
If $(h,X)$ is an infinitesimal static deformation, then by Remark \ref{rmk:static_imp} we know that $\Div h=0$ and $\nabla X=0$. Therefore, setting $h=w$ in \eqref{eqn:curlhw} gives $\curl(h)=0$, so $(h,X)$ is an infinitesimal torsion free deformation.

On the other hand, if $(h,X)\in \ker L\cap \ker\tr$ is an infinitesimal torsion free deformation, we know from Remark \ref{rmk:tfree_imp} that $\curl(X)=0$ and $\Div h=0$.

Therefore, then $\curl(h)+\nabla X=0$. Therefore, $\Delta X = -\Div\curl(h)=0$, so $X$ is in fact parallel and $\curl(h)=0$.

Finally, by \eqref{eqn:curlhw}, for any $w$ we have
\begin{align*}
-\int_M \langle\Delta h+2\riem(h),w\rangle d\mu &= \int_M \langle \curl(h),\curl(w)\rangle d\mu =0,
\end{align*}
hence $\Delta h+2\riem(h)=0$. Thus, $(h,X)$ is an infinitesimal static deformation.

\end{proof}

%\bibliographystyle{amsplain}
%\bibliography{paper.org.tug}

\end{document}